\documentclass[12pt]{amsart}

\usepackage[english]{babel}
\usepackage{tikz-cd}
\usepackage{amsrefs}
\usepackage{graphicx}
\graphicspath{ {images/} }
\usepackage[T1]{fontenc}
\usepackage{txfonts}
\usepackage{times}
\usepackage{amssymb,verbatim,amscd,amsfonts,amsmath,amsthm,enumerate}
\usepackage[all]{xy}
\usepackage{subfig}
\usepackage{tikz}
\usetikzlibrary{shapes,arrows,shadows}
\usetikzlibrary{decorations.markings}
\usepackage{color}
\usepackage[normalem]{ulem}
\usepackage{hyperref}
\usepackage{wrapfig}
\usetikzlibrary{arrows}
\usepackage{pb-diagram}

\usetikzlibrary{shapes.geometric}

\newtheorem{theorem}{Theorem}[section]

\newtheorem{corollary}[theorem]{Corollary}
\newtheorem{lemma}[theorem]{Lemma}

\newtheorem{remark}[theorem]{Remark}
\newtheorem{construction}{Construction}[section]

\newtheorem{conjecture}[theorem]{Conjecture}
\newtheorem{definition}[theorem]{Definition}
\newtheorem{question}[theorem]{Question}

\newtheorem*{thma}{Theorem A}{\bf}{\it}
\newtheorem*{thmb}{Theorem B}{\bf}{\it}
{\bf}{\it}

\newcommand{\N}{\mathbb{N}}

\newcommand{\Z}{\mathbb{Z}}
\newcommand{\C}{\mathbb{C}}

\newcommand{\sing}{\text{Sing}}
\newcommand{\kinf}{\mathbf{k}_{\infty}}
\newcommand{\inv}{^{-1}}

\begin{document}
	
	%%%%%%%%%%%%%%%%%%%%%%%%%%%%%%%%%%%%%%%
	%%%%%%%%%%%%%%%%%%%%%%%%%%
	\renewcommand{\tablename}{Diagram}
	
	%%%%%%%%%%%%%%%%%%%%%%%%%%%%%%%%%%%
	%%%%%%%%%%%%%%%%%%%%%%%%%%%%%%%%%%%
	%%%%%%Authors
	%%%%%%%%%%%%%%%%%%%%%%%%%%%%%%%%%%
	%%%%%%%%%%%%%%%%%%%%%%%%%%%%%%%%%%
	\title{Stratification and Realization of Singularity data on the Loch Ness monster}
	\author{Nate Fisher and Camilo Ram\'irez Maluendas}
	\address{Department of Mathematics and Statistics, Swarthmore College, Swarthmore, PA, USA}
	\email{nfisher2@swarthmore.edu}
	\address{Universidad Nacional de Colombia, Sede Manizales, Facultad de Ciencias Exactas y Naturales, Departamento de Matemáticas y Estadística, Cr. 27 \# 64-60, Manizales, CP 170004, Caldas, Colombia}
	\email{camramirezma@unal.edu.co}
	\keywords{tame translation surface, singularity data, stratum, Loch Ness monster.}
	\subjclass[2020]{Primary 32G15, 51M15, Secondary 30F99}

	%%%%%%%%%%%%%%%%%%%%%%%%%%%%%%%%%%%%%%%%%%%
	%%%%%%%%%%%%%%%%%%%%%%%%%%%%%%%%%%%%%%%%%%%
	%%%%%%%%%%%Abstract
	%%%%%%%%%%%%%%%%%%%%%%%%%%%%%%%%%%%%%%%%%%%
	%%%%%%%%%%%%%%%%%%%%%%%%%%%%%%%%%%%%%%%%%%%
	
	\begin{abstract}
	The classical theory of compact translation surfaces is organized through the stratification of moduli spaces according to the orders of the zeros of the associated Abelian differentials. In the setting of tame translation surfaces of infinite topological type, no analogous notion of strata has been systematically developed. 
    In this article, we introduce a stratification theory for tame translation surfaces based on their singularity data. To each non-compact tame translation surface, we associate a sequence recording the cardinalities of finite-angle cone singularities of each possible cone angle together with the cardinality of the set of infinite-angle singularities. We call this sequence the \emph{singularity data} of the surface.

We study the realization problem for singularity data on the Loch Ness monster. Our first main result shows that there is no tame translation structure on the Loch Ness monster having only a finite number of finite cone angle singularities. We then prove that this is the only obstruction: every singularity data not excluded by the previous result is realized by a tame translation structure on the Loch Ness monster. As a consequence, we obtain a complete characterization of the strata of tame translation structures on this surface in terms of their singularity data.

These results provide the first realization theorem for strata of non-compact tame translation surfaces and reveal a strong interaction between singularity data and the ends space.
	\end{abstract}

    	\maketitle
	%\textbf{Key words.} 

	%\tableofcontents
	
	%%%%%%%%%%%%%%%%%%%%%%%%%%%%%%%%%%%%%%%
	%%%%%%%%%%%%%%%%%%%%%%%%%%%%%%%%%%%%%%%
	%%%%%%%%%%%Introduction
	%%%%%%%%%%%%%%%%%%%%%%%%%%%%%%%%%%%%%%%
	%%%%%%%%%%%%%%%%%%%%%%%%%%%%%%%%%%%%%%

\section{Introduction}
Translation surfaces have become an important object of study in several branches of mathematics, including Teichmüller theory, dynamical systems, geometric topology, algebraic geometry, billiards in rational polygons and others. See, for example, \cites{Gut-judge, maluendas2025ends, apisa2021marked}. Several authors have focused their interest on the study of the stratification of the moduli spaces of compact translation surfaces \cites{Veech1990, EskinOkounkov2001, Kont-Zor, ChenMoller2020, Lanneau2008, McMullen2007}. Given a compact translation surface, its singularities correspond to the zeros of an Abelian differential and are necessarily finite in number. If the cone angles are
\[
2\pi(k_{1}+1),\ldots,2\pi({k}_{l}+1),
\]
then the corresponding translation surface has genus $g$ and belongs to the stratum
\[
\mathcal H(k_{1},\ldots,k_{l}),
\]
where the integers $k_{1},\ldots,k_{l}$ satisfy the Gauss--Bonnet relation
\[
k_{1}+\cdots+k_{l}=2g-2.
\]
However, in the theory of translation surfaces of infinite topological type, there is as of yet no description of a stratification similar to that of the compact case.

During the last decade, several mathematicians have published interesting contributions on non-compact translation surfaces \cites{Maluendas2025, HidMo2023, randecker2018wild, hooper2019}. A natural framework for studying non-compact translation surfaces is provided by the class of \emph{tame translation surfaces}, introduced by F. Valdez \cite{Val2012}. Roughly speaking, a translation surface is tame if every point of its metric completion is either a regular point, a finite-angle cone singularity, or an infinite-angle cone singularity. 

The first objective of this article is to introduce a natural stratification theory for tame translation surfaces. Let $(S,\mathcal A)$ be a tame translation surface, as defined in Section~\ref{subsection_tame_translation_surface} below. For each $n\in\mathbb N$, let ${\rm Sing}_{n}(S)$ denote the set of cone singularities of angle $2\pi(n+1)$ and let ${\rm Sing}_0(S)$ denote the set of infinite-angle singularities. Writing
\[
k_{n}:=\operatorname{card}({\rm Sing}_{n}(S)),
\]
we obtain a sequence
\[
\textbf{k}_{\infty}:=(k_n)_{n\in\mathbb N_0},
\]
which records the number of singularities of each possible cone angle. We call this sequence the \emph{singularity data} of $(S,\mathcal A)$.

This invariant may be viewed as a natural extension of the classical partition $(k_1,\ldots,k_r)$ defining the strata of compact translation surfaces. Indeed, in the compact case all but finitely many terms of the sequence are 0, and the singularity data completely determines the genus and the associated stratum of the surface. Motivated by this observation, we define the strata of tame translation surfaces as the collections of tame translation structures having the same singularity data.
Once this definition is established, a fundamental realization problem immediately arises.

\begin{quote}
\textit{Which singularity data can be realized by tame translation structures on a given non-compact surface?}
\end{quote}
Recall that orientable non-compact surfaces are classified, up
to homeomorphisms, by their genus $g(S)\in\mathbb{N}_{0} \cup \{\infty\}$, and a pair of nested topological spaces
${\rm Ends}_{\infty}(S)  \subseteq {\rm Ends}(S)$ homeomorphic to a pair of nested closed subsets of the Cantor space. See Section~\ref{subsection:surfaces} below. In the present article we answer the previous question the simplest surface of infinite genus, which is well-known as the \emph{Loch Ness monster}. Up to homeomorphism, it is the unique orientable surface of infinite genus with exactly one end. See Figure \ref{Fig:LNM}.
    \begin{figure}[!ht]
	\begin{center}	
		\begin{tikzpicture}[baseline=(current bounding box.north)]
		\begin{scope}[scale=0.7]
		\clip (-6,-1.5) rectangle (8,2);
		\draw [line width=1pt] (-2.5,0) to[out=90,in=180] (-1.5,1.5);
		\draw [line width=1pt] (-1.5,1.5) to[out=0,in=90] (-0.5,0);
		\draw [line width=1pt] (-2,0) to[out=90,in=180] (-1.5,1);
		\draw [line width=1pt] (-1.5,1) to[out=0,in=90] (-1,0);
		%%%%%%%%%%%%%%%%%%%%%%%%%%%%%%%%%%%%%%%%%%
		%%%%%%%%%%%%%%%%%%%%%%%%%%%%%%%%%%%%%%%%%%%%
		\draw [dashed, line width=0.6pt]  (-2.25,0) ellipse (2.5mm and 1mm);
		\draw [line width=1pt] (-2.5,0) arc
		[
		start angle=180,
		end angle=360,
		x radius=2.5mm,
		y radius =1mm
		] ;
		\draw [dashed, line width=0.6pt]  (-0.75,0) ellipse (2.5mm and 1mm);
		\draw [line width=1pt] (-1,0) arc
		[
		start angle=180,
		end angle=360,
		x radius=2.5mm,
		y radius =1mm
		] ;
		\draw [dashed, line width=0.6pt]  (-1.5,1.25) ellipse (1mm and 2.5mm);
		\draw [line width=1pt] (-1.5,1.5) arc
		[
		start angle=90,
		end angle=270,
		x radius=1mm,
		y radius =2.5mm
		] ;
		%%%%%%%%%%%%%%%
		\draw [line width=1pt] (0,0) to[out=90,in=180] (1,1.5);
		\draw [line width=1pt] (1,1.5) to[out=0,in=90] (2,0);
		\draw [line width=1pt] (0.5,0) to[out=90,in=180] (1,1);
		\draw [line width=1pt] (1,1) to[out=0,in=90] (1.5,0);
		%%%%%%%%%%%%%%%%%%%%%%
		%%%%%%%%%%%
		\draw [dashed, line width=0.6pt]  (0.25,0) ellipse (2.5mm and 1mm);
		\draw [line width=1pt] (0,0) arc
		[
		start angle=180,
		end angle=360,
		x radius=2.5mm,
		y radius =1mm
		] ;
		\draw [dashed, line width=0.6pt]  (1.75,0) ellipse (2.5mm and 1mm);
		\draw [line width=1pt] (1.5,0) arc
		[
		start angle=180,
		end angle=360,
		x radius=2.5mm,
		y radius =1mm
		] ;
		\draw [dashed, line width=0.6pt]  (1,1.25) ellipse (1mm and 2.5mm);
		\draw [line width=1pt] (1,1.5) arc
		[
		start angle=90,
		end angle=270,
		x radius=1mm,
		y radius =2.5mm
		] ;
		%%%%%%%%%%%%%%%%%%%%%%%%%%%%
		%%%%%%%%%%%%
		\draw [line width=1pt] (2.5,0) to[out=90,in=180] (3.5,1.5);
		\draw [line width=1pt] (3.5,1.5) to[out=0,in=90] (4.5,0);
		\draw [line width=1pt] (3,0) to[out=90,in=180] (3.5,1);
		\draw [line width=1pt] (3.5,1) to[out=0,in=90] (4,0);
		%%%%%%%%%%%%%%%%%%%%%%%%
		%%%%%%%%%
		\draw [dashed, line width=0.6pt]  (2.75,0) ellipse (2.5mm and 1mm);
		\draw [line width=1pt] (2.5,0) arc
		[
		start angle=180,
		end angle=360,
		x radius=2.5mm,
		y radius =1mm
		] ;
		\draw [dashed, line width=0.6pt]  (4.25,0) ellipse (2.5mm and 1mm);
		\draw [line width=1pt] (4,0) arc
		[
		start angle=180,
		end angle=360,
		x radius=2.5mm,
		y radius =1mm
		] ;
		\draw [dashed, line width=0.6pt]  (3.5,1.25) ellipse (1mm and 2.5mm);
		\draw [line width=1pt] (3.5,1.5) arc
		[
		start angle=90,
		end angle=270,
		x radius=1mm,
		y radius =2.5mm
		] ;
		%%%%%%%%%%%%%%%%%%%%%%%%%%%
	%%%%%%%%%%%%%%%%%%%%%%
		%%%%%%%%%%%%%%%%%%%%%%%%%%%%%%%%%%%
		\draw [line width=1pt](-3,-0.8) -- (5,-0.8);
		\draw [line width=1pt](-3,0.6) -- (-2.45,0.6);
		\draw [line width=1pt](-1.95,0.6) -- (-1.05,0.6);
		\draw [line width=1pt](-0.55,0.6) -- (0.05,0.6);
		\draw [line width=1pt](0.55,0.6) -- (1.45,0.6);
		\draw [line width=1pt](1.91,0.6) -- (2.55,0.6);
		\draw [line width=1pt](3.05,0.6) -- (3.95,0.6);
		\draw [line width=1pt](4.45,0.6) -- (5,0.6);
		\draw [line width=1pt](-3,-0.825) -- (-3,0.625);
		\draw [line width=1pt](5,-0.825) -- (5,0.625);
		%%%%%%%%%%%%%%%%
		\node at (5.5,0) {$\ldots$};
		\node at (-3.5,0) {$\ldots$};
		%\node at (1,1.5) {$\vdots$};
	\node at (1.1,-1.1) {$\vdots$};
		\end{scope}
		\end{tikzpicture} 
				\caption{\em Loch Ness monster.}
	\label{Fig:LNM}
	\end{center}	
\end{figure} 

Our first main result shows that, unlike in the compact case, finite collections of finite-angle singularities cannot occur on tame translation structures carried by the Loch Ness monster.

\begin{thma}\label{thma}
There does not exist a tame translation structure on the Loch Ness monster having singularity data $\kinf$ where $k_0 = 0$ and $\sum_{n=1}^\infty k_n < \infty$. That is, a tame translation structure on the Loch Ness monster must have at least one infinite-angle cone singularities or infinitely many finite-angle cone singularities.
\end{thma}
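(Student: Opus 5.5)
The plan is to argue by contradiction. Suppose $(S,\mathcal A)$ is a tame translation structure on the Loch Ness monster with $k_0=0$ and only finitely many finite-angle cone singularities, say with cone angles $2\pi(n_1+1),\dots,2\pi(n_r+1)$. Since $k_0=0$, the metric completion $\widehat S$ contains only regular points and these finitely many cone points. The idea is to cut off a compact piece of large genus whose boundary curves have controlled geometry, apply Gauss--Bonnet with boundary to it, and obtain a quantity that grows linearly in the genus while the singularities contribute only a bounded amount.

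First I choose the compact pieces. Using the classification of surfaces recalled in Section~\ref{subsection:surfaces}, the Loch Ness monster has an exhaustion by compact subsurfaces $K_1\subset K_2\subset\cdots$, where each $K_j$ has genus $g_j\to\infty$ and exactly one boundary component $\gamma_j$, because there is only one end. I take $j$ large enough that all cone points lie in the interior of $K_j$.

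The key step is to replace $\gamma_j$ by a curve on which the turning of the flat structure is controlled. Away from singularities the structure is flat with trivial holonomy. So I would realize the boundary of a genus-$g_j$ compact region either by a geodesic representative or by a piecewise geodesic curve avoiding the singularities. Gauss--Bonnet for a flat surface with cone points and piecewise geodesic boundary gives
\[
\sum_{i=1}^r 2\pi n_i \;=\; -2\pi\chi(K_j) + \sum_{\text{corners}}(\pi-\theta_c)\cdot(\pm1),
\]
where $\chi(K_j)=1-2g_j$. The boundary term should be bounded independently of $j$. A natural way to get this is to take $\gamma_j$ to be a closed regular curve whose total turning equals its winding number with respect to the flat direction field. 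Because the holonomy is trivial, that total turning is governed by the rotation index of $\gamma_j$ with respect to the parallel vector field, and I can try to arrange it to be at most $2\pi$ in absolute value. The left side is fixed, while the right side contains $2\pi(2g_j-1)$, which tends to infinity. This gives the contradiction.

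The main obstacle is making the boundary term uniformly bounded. Equivalently, I need to show that the separating curve bounding $K_j$ can be chosen with rotation number (relative to the horizontal direction field) at most $1$ in absolute value. I would first try a Poincaré--Hopf formulation. The horizontal foliation gives a line field on $K_j$ whose only singularities are the cone points, of index $-n_i$. Hence $\sum_i(-n_i)=\chi(K_j)-\operatorname{rot}(\gamma_j)$, so the whole question becomes whether $\operatorname{rot}(\gamma_j)$ can be bounded. Here non-compactness and tameness should be used. Outside a compact set containing the singularities, the surface is a flat surface with one end and no singularities in its completion. I would try to show that such an end must look like a neighborhood of infinity of a flat plane or cylinder. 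A cylinder is ruled out because there is one end of infinite genus, so I must rule out genus accumulating at an end without singular points. That likely needs a developing-map argument: a flat end with no completion points should develop locally isometrically and be covered by a planar region, which has no genus. Failing that, I would choose $\gamma_j$ as a large closed curve made of horizontal and vertical saddle-free segments so that its turning can be computed directly.
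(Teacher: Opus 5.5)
You have found where the difficulty lies, but the proposal does not resolve it, so there is a genuine gap.

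\textbf{The boundary term cannot be controlled by choosing the curve.} Your own Poincar\'e--Hopf identity forces $\operatorname{rot}(\gamma_j)=\chi(K_j)+\sum_i n_i=1-2g_j+\sum_i n_i$. This holds for \emph{every} curve bounding a genus-$g_j$ region that contains all the cone points, whether geodesic, polygonal, or built from horizontal and vertical segments. So the goal of arranging $|\operatorname{rot}(\gamma_j)|\le 1$ is impossible. Gauss--Bonnet on $K_j$ is an identity and gives no contradiction by itself.

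\textbf{What remains is the whole theorem, and it needs $k_0=0$ globally.} Everything rests on your last step: that a singularity-free flat end cannot accumulate genus. That step is the entire content of Theorem~\textbf{A}. The Loch Ness monster built in the proof of Theorem~\ref{theorem_case_1_MTB} shows why $k_0=0$ is essential. There $S$ contains no cone points at all; its unique infinite-angle point lies in $\widehat S\setminus S$. So the curves $\gamma_j$ really do have turning $2\pi(1-2g_j)$, and the singularity-free end carries infinite genus.

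\textbf{The developing-map heuristic gives nothing.} Every translation surface minus its singularities has a developing map from its universal cover into $\mathbb{C}$; the flat torus, for instance, is covered by the plane. Being ``covered by a planar region'' therefore carries no information about genus.

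\textbf{The missing ingredient is completeness.} With $k_0=0$ and finitely many finite-angle cone points, tameness makes $\widehat S$ a complete, locally compact flat surface with finitely many cone points. It contains $S$ as an open subset, so it suffices to show that $\widehat S$ has finite genus. Two routes work:
\begin{enumerate}
\item[(i)] Huber's theorem. Smooth each cone point of angle $2\pi(n+1)$, inside a small disc, to a nonpositively curved metric of total curvature $-2\pi n$. This gives a complete Riemannian metric with $\int K^-\,dA<\infty$, and Huber's theorem then yields finite topological type.
\item[(ii)] A Fiala--Hartman/Cohn-Vossen argument closer to your exhaustion. Take $E$ to be a component of infinite genus of $\overline{\widehat S\setminus K}$, where $K$ is compact and contains the cone points. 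Set $E_t=\{x\in E: d(x,\partial E)\le t\}$ and let $L(t)$ be the length of $\{d=t\}$. In the appropriate weak sense one has $L'(t)\le 2\pi\chi(E_t)-\tau_E(\partial E)$. Infinite genus forces $\chi(E_t)\to-\infty$, so $L$ would become negative unless $E$ is bounded. A bounded closed subset of a complete locally compact surface is compact, which contradicts the non-compactness of $E$.
\end{enumerate}

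\textbf{The paper's local route does not fill this gap either.} The paper picks a singularity-free pair of pants and asserts that each polygonal boundary curve turns by $0$ or $2\pi$. A local count of this kind gives no contradiction. With the orientation induced as boundary of the pants, a planar pair of pants (a polygonal disc with two polygonal holes in $\mathbb{C}$) has boundary turnings $2\pi,-2\pi,-2\pi$. These sum to $2\pi\chi=-2\pi$, exactly as Gauss--Bonnet requires. So the global input from completeness is genuinely needed.
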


Theorem \textbf{A} reveals a new global obstruction relating the topology of the underlying surface and the singularity structure of a tame translation surface. In particular, it shows that the classical philosophy of prescribing arbitrary finite singularity data fails dramatically in the non-compact surfaces setting.

%If $S$ is a tame translation surface, for each $n\in\mathbb{N}$, let ${\rm Sing}_{n}(S)$ be the set of all singularities of cone angle $2\pi (n+1)$ of $S$. Let ${\rm Sing}_{0}(S)$ denote the set of all singularities of infinity cone angle of $S$. The set of all finite and infinite cone-angle singularities of $\widehat{S}$ is written in the form
%\[
%{\rm Sing}(\widehat{S})=\bigsqcup\limits_{n=0}^{\infty} {\rm Sing}_{n}(S).
%\]
%For each $n\in\mathbb{N}_{0}$, set 
%\[
%{\rm card}({\rm Sing}_{n}(S)):=k_{n},
%\]
%where $0\leq k_{n}\leq \aleph_{0}$. The sequence 
%\[
%\mathbf{k}_{\infty}:=(k_{n})_{n\in\mathbb{N}_{0}}
%\]
%is called the \emph{singularity data} of $S$.

Our second result proves that this obstruction is essentially the only one. 

\begin{thmb}\label{thmb}
Let
$\textbf{k}_{\infty}=(k_n)_{n\in\mathbb N_0}$ 
represent singularity data not excluded by Theorem \textbf{A}. Then there exists a tame translation structure on the Loch Ness monster whose singularity data is exactly 
$\textbf{k}_{\infty}=(k_n)_{n\in\mathbb{N}_{0}}$.
\end{thmb}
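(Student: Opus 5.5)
The plan is a cut-and-paste construction, handling two cases: (i) $k_0\ge 1$, and (ii) $k_0=0$ with $\sum_{n\ge1}k_n=\aleph_0$. In both cases we build the surface as an infinite chain of compact or simple flat pieces glued along slits. Three things are then checked: the result is tame, it has infinite genus, and it has exactly one end. Once genus is infinite and the end space is a single point that is nonplanar, the classification of non-compact surfaces (Section~\ref{subsection:surfaces}) identifies the surface with the Loch Ness monster.

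\textbf{Building blocks.} For each $n\ge1$ we fix a compact translation surface $X_n$ in the stratum $\mathcal H(n)$ when $n$ is even, or in $\mathcal H(n-1,1)$ / $\mathcal H(n)$ as parity requires (genus $g$ with $2g-2=n$ forces $n$ even). To avoid parity issues, we instead use the standard slit construction. Glue a copy of $\mathbb C$ to $n$ further copies of $\mathbb C$ cyclically along a horizontal slit $[0,1]$. This produces a single cone point of angle $2\pi(n+1)$ at $0$ and another at $1$. We then pair such slits to control the count of singular points exactly. For regular pieces we use flat tori with a slit, which add one handle and only regular points after regluing. Gluing a slit torus to the plane along a slit creates two cone points of angle $4\pi$. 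So the cleanest tool is to slit the plane along segments far apart and glue in slit tori or slit multi-sheeted covers, keeping a precise tally of the singularities created at the slit endpoints.

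\textbf{Case (ii), $k_0=0$.} Start with $\mathbb C$. Choose disjoint horizontal slits $s_j=[4j,4j+1]$, $j\in\mathbb N$. The data lists a countable multiset of angles $2\pi(n+1)$, with $n\ge1$ and infinitely many entries. Group the list into consecutive pairs, padding with extra angle-$4\pi$ points only if $k_1=\aleph_0$ or some $k_n=\aleph_0$ can absorb them; the infinite count guarantees that some $k_n$ is infinite, and that class absorbs any parity or padding defects. At slit $s_j$ we glue in a compact translation surface, slit along a saddle-connection-free segment, so that the two endpoint angles equal the prescribed pair. An $(n+1)$-sheeted cyclic branched slit gluing of tori achieves angle $2\pi(n+1)$ at both ends; distinct angles at the two ends are obtained by additionally identifying endpoints of two different slits, i.e.\ a chain gluing. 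Each glued piece adds genus at least one, so the genus is infinite. Since the slits escape to infinity in a single plane and each attached piece is compact, the surface has one end. No infinite-angle points appear because each singularity meets only finitely many sheets, so tameness holds.

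\textbf{Case (i), $k_0\ge1$.} Here we use infinite cyclic slit gluings. Gluing $\mathbb Z$ copies of $\mathbb C$ along a slit $[0,1]$ in a staircase fashion yields two infinite-angle singularities. Gluing a half-infinite chain (copies indexed by $\mathbb N$) closed up at one endpoint instead gives a single infinite-angle point together with one controlled finite point. Performing these at $k_0$ slits (countably many if $k_0=\aleph_0$), and the Case (ii) insertions for the finite data (possibly finitely many now, or none), realizes the data. The delicate point is that each infinite sheet chain introduces new ends. To keep exactly one end, we do not attach whole planes. We attach half-planes or strips that are reglued back into the main plane, or we use the "infinite staircase" of tori, whose end is absorbed into the single end. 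Alternatively we pass through a one-ended infinite-genus base (the already-built Case (ii) surface) and glue all sheets into it so that every complement of a compact set stays connected.

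\textbf{Main obstacle.} The hardest step is the end count in Case (i): creating infinite-angle singularities normally requires infinitely many sheets, and these tend to add ends (Theorem~A shows the topology genuinely interacts with the singularities). I would first try to realize a single infinite-angle point by gluing a sequence of slit tori $T_1,T_2,\dots$ with slits of growing length sharing one endpoint accumulating toward the infinite-angle point, so that everything remains inside one neighborhood of infinity. I would then verify one-endedness directly: for every compact $K$, every component of the complement of $K$ is connected to the main plane's complement of a disk.
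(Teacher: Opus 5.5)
Your proposal has two genuine gaps, one in each case.

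\textbf{Case (ii).} Every insertion you use is a compact modification of the plane: a compact translation surface glued into $\mathbb{C}$ along a finite slit, or a finite chain of such slits. This forces a parity constraint. Enclose one insertion in a large square. The enclosed region is a compact surface of some genus $g$ with one polygonal boundary curve, so Theorem~\ref{Theorem:Gauss_Bonnet} with $\Delta=2\pi$ and $\chi=1-2g$ gives $\sum_x k_x=2g$, summed over the cone points created by that insertion. Hence each insertion contributes an even total excess. Consequently no data with $\sum_{n\ \mathrm{odd}}k_n$ finite and odd can be produced. An example is $\kinf=(0;1,\infty,0,0,\ldots)$: one point of angle $4\pi$ and infinitely many of angle $6\pi$.

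Your way out rests on the false claim that $\sum_{n\ge1}k_n=\infty$ forces some $k_n=\aleph_0$; take $k_n=1$ for all $n$. Moreover, an infinite class with $n$ even cannot absorb an odd defect, and padding with extra $4\pi$ points changes the data.

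The paper breaks the parity with a non-compact surgery (Operation 3). It glues a new copy of $\mathbb{C}$ to $L_0$ along an infinite ray, which creates a single point of angle $4\pi$ and keeps one end (Lemma~\ref{lemma:one_ends_glue_case_I}). It then removes a regular $2(n-1)$-gon with a vertex at that point to raise its angle to $2\pi(n+1)$ (Lemma~\ref{lem:2n-gon-on-sing}(a)). All other points come from removing regular polygons and gluing opposite sides (Lemma~\ref{lem:2n-gon}). A $2n$-gon yields one point of angle $2\pi(n+1)$ for $n$ even, and a $2(2n+1)$-gon yields two such points for $n$ odd. This gives exact counts with no pairing.

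\textbf{Case (i).} Here no construction is actually given. You rightly see that a $\mathbb{Z}$-chain of planes along a finite slit is useless. A large circle has trivial monodromy, so it lifts to a closed loop in every sheet, and each sheet contributes its own planar end. But the remedies you list are not specified, and the one concrete idea fails for two reasons:
\begin{itemize}
\item Every finite slit has a second endpoint, which becomes an extra finite cone point of angle at least $4\pi$. This is forbidden, for instance, when $\kinf=(1,0,0,\ldots)$.
\item Infinitely many slits issuing from one point of a flat piece have accumulating directions. That point then has no neighborhood isometric to the infinite cyclic branched cover, so tameness fails.
\end{itemize}

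The missing idea is that the slit must run to infinity. Take the infinite cyclic branched cover of $\mathbb{C}$ over $\mathbf{0}$, slit along the positive real ray. With the branch point removed it is homeomorphic to a plane, since the monodromy around a large circle is $\pm1$. So it is one-ended with exactly one infinite-angle point. Removing from every sheet a square with a vertex at the branch point and gluing opposite sides adds a handle per sheet and no new singularities, since all vertices are identified with the infinite-angle point. Further infinite-angle points come from cutting along the lifts of rays from other base points and regluing them cyclically. The finite part of the data is then added on one sheet by the Case (ii) surgeries.

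If you follow this route, choose the regluings so that the total monodromy around a large circle is nonzero. The paper's choice uses a rightward ray at $\mathbf{0}$, leftward rays at the $z_\ell$, and the same rule ``top of $L_n$ to bottom of $L_{n-1}$''. With it, the monodromy equals $2-k_0$ for finite $k_0$, which vanishes when $k_0=2$, and you are back to infinitely many planar ends. Reversing the rule on one ray fixes this.
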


Consequently, every admissible singularity data can be realized by a tame translation structure on the Loch Ness monster. Together, Theorems \textbf{A} and \textbf{B} provide a complete characterization of the strata of tame translation structures on this surface.

From a broader perspective, our results suggest that singularity data play for tame translation surfaces a role analogous to that played by partitions of $2g-2$ in the classical theory of Abelian differentials. However, the realization theory is governed not only by local singularity data but also by the ends spaces of the underlying surface. This interaction between singularities and ends appears to be one of the fundamental features distinguishing non-compact translation surfaces from their compact counterparts.

The proofs our main theorems combine techniques from the topology of non-compact surfaces, the theory of tame translation surfaces, and explicit geometric constructions based on infinite gluings of marked translation surfaces. A key ingredient is the construction of families of tame translation structures that allow complete control over the number and type of singularities while preserving the topological type of the Loch Ness monster.

\vspace{.5cm}
\noindent\textbf{Organization of the paper.}
In Section~\ref{sec:preliminaries}, we give background information on topological surfaces, the space of ends, and tame translation structures. In Section~\ref{sec:proof_thma} we prove Theorem \textbf{A}, showing there is no tame translation structure on the Loch Ness monster containing only a finite number of cone point singularities, all of finite cone angle. In Section~\ref{sec:proof_thmb} we develop auxiliary constructions which will help us build tame translation structures on the Loch Ness monster with specific singularity data. We then prove Theorem \textbf{B}. Finally, we end in Section~\ref{sec:future} by sharing open questions, conjectures, and directions for future exploration.

\vspace{.5cm}
\noindent\textbf{Acknowledgments.} The first author gratefully acknowledges support from Fulbright Scholar Program sponsored by the U.S. Department of State and Fulbright Colombia. He also would like to thank the Universidad Nacional de Colombia, Sede Manizales and, in particular, the Departamento de Matem\'aticas y Estad\'istica for being kind and generous hosts for the duration of his Fulbright grant.

The second author expresses his gratitude to Universidad Nacional de Colombia, Sede Manizales. He has dedicated this work to his beautiful family: Marbella and Emilio, in appreciation of their love and support. 

% (Save this for later once there are referees) The authors are especially grateful to the anonymous referees for their invaluable contributions and useful comments. 

\section{Preliminaries}\label{sec:preliminaries}
\subsection{Topological surfaces}\label{subsection:surfaces}
Throughout this paper, a \emph{surface} is a connected $2$-dimensional topological manifold, denoted by $S$. Unless explicitly stated otherwise, all surfaces are assumed to be orientable.

\subsubsection{Space of ends of topological surfaces.} 
Intuitively, an \emph{end} of a surface represents a direction in which the surface escapes to infinity. We briefly recall the definition. 

\begin{definition}[\cite{Fre}] \label{section_ends_space} 
\begin{upshape}
Let $S$ be a connected surface, and let $(U_n)_{n\in\mathbb{N}}$ be an infinite nested sequence 
\[
U_{1}\supset U_{2}\supset\ldots
\]
of non-empty connected open subsets of $S$, satisfying the following conditions:
\begin{enumerate}
\item[\textbf{(1)}]  for every $n\in\mathbb{N}$, the boundary $\partial U_n$ of $U_{n}$ is compact; 

\item[\textbf{(2)}]  $\bigcap\limits_{n\in\mathbb{N}}\overline{U_{n}}=\emptyset
$;

\item[\textbf{(3)}] for every compact  subset $K\subset S$, there exists $m\in\mathbb{N}$ such that $K\cap U_m =\emptyset$.
\end{enumerate}
Two such sequences $(U_n)_{n\in\mathbb{N}}$ and $(U'_{n})_{n\in\mathbb{N}}$ are said to be \emph{equivalent} if for every $n\in\mathbb{N}$, there exist $j,k\in\mathbb{N}$ such that 
\[
U_{n}\supset U'_{j},\qquad \text{and} \qquad U'_{n}\supset U_{k}.
\]
An \emph{end} of $S$ is an equivalence class of such sequences. The set of all ends of $S$ is denoted by ${\rm Ends}(S)$.

The topology on ${\rm Ends}(S)$ is defined as follows. For any non-empty open subset $U\subset S$ with compact boundary $\partial U$, let
\begin{equation}\label{eq:end_open}
U^{*}:=\left\{[U_{n}]_{n\in\mathbb{N}}\in{\rm Ends}(S)\hspace{1mm}|\hspace{1mm}U_{j}\subset U\hspace{1mm}\text{for some }j\in\mathbb{N}\right\}.
\end{equation}
The collection of all such sets $U^{*}$ forms a basis for the topology of ${\rm Ends}(S)$ (see \cite{Fre}*{1. Kapitel}).
\end{upshape}
\end{definition}

\begin{theorem}[\cite{Ray}]
The space ${\rm Ends}(S)$ is Hausdorff, totally disconnected,
and compact.
\end{theorem}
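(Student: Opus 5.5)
We prove the three properties of ${\rm Ends}(S)$ with the topology generated by the sets $U^{*}$ of \eqref{eq:end_open}. Everything rests on a compact exhaustion $K_1\subset K_2\subset\cdots$ of $S$ by compact subsurfaces with boundary, where each $S\setminus K_n$ has finitely many components and each component has compact boundary. Such an exhaustion exists because $S$ is a second countable connected manifold.

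\emph{Step 1: normal form.} By condition (3) and equivalence of sequences, every end $[U_n]$ determines, for each $n$, a unique component $C_n$ of $S\setminus K_n$ that contains some $U_j$. The $U_j$ are connected and eventually avoid $K_n$, which gives existence. Two distinct components are disjoint, which gives uniqueness. The sequence $(C_n)$ is nested and is equivalent to $(U_n)$. Conversely, any nested choice of unbounded components $C_n$ of $S\setminus K_n$ defines an end. This yields a bijection
\[
{\rm Ends}(S)\;\longleftrightarrow\;\varprojlim_n \pi_0^{\rm unb}(S\setminus K_n),
\]
where $\pi_0^{\rm unb}$ denotes the finite set of components with non-compact closure. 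The main technical point is checking that the topology matches. Under this bijection, each $C_n^{*}$ is a cylinder set. Any basic set $U^{*}$ is a union of such cylinders: if $U_j\subset U$, choose $n$ with $\partial U\subset K_n$. Then $C_n$ meets $U$ and misses $\partial U$, and since $C_n$ is connected, $C_n\subset U$. Hence the topology is exactly the inverse-limit (product-subspace) topology.

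\emph{Step 2: conclusions.} An inverse limit of finite discrete spaces is a closed subset of a product of finite sets. By Tychonoff's theorem it is therefore compact and Hausdorff. It is also totally disconnected, since the cylinder sets are clopen.

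Hausdorffness can also be seen directly. Distinct ends differ at some level $n$, so they lie in the disjoint open sets $C_n^{*}$ and $C_n'^{*}$.

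Compactness can also be proved directly, which avoids Tychonoff. Given an open cover, suppose no finite subcover exists. By König's lemma applied to the finitely branching tree of components, one finds a nested sequence $C_n$ such that no $C_n^{*}$ is finitely covered. The resulting end lies in some basic set $U^{*}$ of the cover. By Step 1 some $C_n^{*}$ is contained in $U^{*}$, which is a contradiction.
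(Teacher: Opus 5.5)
Your proof is correct. The paper gives no proof of this statement; it quotes it from Raymond \cite{Ray} as background, so there is no internal argument to compare yours against. Your argument is the standard Freudenthal-style one. You identify ${\rm Ends}(S)$, with the topology generated by the sets $U^{*}$, with the inverse limit $\varprojlim_n \pi_0^{\rm unb}(S\setminus K_n)$ of finite discrete sets, and this yields compactness, the Hausdorff property and total disconnectedness at once. As a bonus, it shows that ${\rm Ends}(S)$ is metrizable and embeds in the Cantor set, which is the description the paper invokes in its introduction. The key step, that every basic set $U^{*}$ is a union of cylinders $C_n^{*}$, is handled correctly. You choose $n$ with $\partial U\subset K_n$ and use that $C_n$ is connected, meets $U$ and misses $\partial U$.

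A few points should be made explicit in a write-up.

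First, the exhaustion must satisfy $K_n\subset {\rm int}\,K_{n+1}$ and $\bigcup_n K_n=S$. This guarantees that every compact set, in particular $\partial U$ or $\partial U_n$, lies in some $K_n$. It also ensures that your normal-form sequence $(C_n)$ satisfies conditions \textbf{(2)} and \textbf{(3)} of Definition~\ref{section_ends_space}. Finiteness of the component sets comes from each complementary component of the compact subsurface $K_n$ abutting one of its finitely many boundary circles.

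Second, the claim that $(C_n)$ is equivalent to $(U_n)$ needs, in the direction $C_j\subset U_n$, exactly the connectedness argument you give later for $U^{*}$: take $j$ with $\partial U_n\subset K_j$. You should also note two further facts:
\begin{itemize}
\item $C_n$ is open with $\partial C_n\subset\partial K_n$ compact, so $C_n^{*}$ really is one of the basic sets;
\item the assignment $[U_n]\mapsto (C_n)$ does not depend on the chosen representative.
\end{itemize}

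Third, in the direct compactness argument the members of the cover are arbitrary open sets, so you should pass to a basic set $U^{*}$ inside a member containing the limiting end. König's lemma is more than you need here. At each level $C_n^{*}$ is the finite union of the sets $C_{n+1}^{*}$ with $C_{n+1}\subset C_n$, so a simple induction produces the nested sequence of sets not finitely covered.
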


By a \emph{subsurface} of $S$ we mean an embedded surface which is a closed subset of $S$ and whose boundary consists of a finite number of nonintersecting simple closed curves. A subsurface may be either compact or non-compact.
The {\it reduced genus} of a compact subsurface $\tilde{S}\subset S$,  with $q(\tilde{S})$ boundary curves and Euler characteristic $\chi(\tilde{S})$, is the number 
\[
g(\tilde{S})=1-\frac{1}{2}\left(\chi(\tilde{S})+q(\tilde{S})\right).
\] 
The {\it genus} of $S$ is the supremum of the genera of its compact subsurfaces. It may be a non-negative integer or $\infty$. 

A surface $S$ is said to be {\it planar} if it has genus zero, equivalently, it is homeomorphic to an open subset of the complex plane.

\begin{remark}
\begin{upshape}
	By Definition \ref{section_ends_space}, every end can be represented by a sequence $(U_{n})_{n\in\mathbb{N}}$ whose closures $\overline{U}_{n}$ are subsurfaces of $S$. 
    
    An end $[U_n]_{n\in\mathbb{N}}\in {\rm Ends}(S)$ is called \emph{planar}, if there is $l\in\mathbb{N}$ such that the subsurface $\overline{U}_l\subset S$ is planar.
    \end{upshape}
\end{remark}

Let ${\rm Ends}_{\infty}(S)$ denote the subset of ${\rm Ends}(S)$ consisting of all non-planar ends (equivalently, ends carrying infinite genus). It follows directly from the definition that ${\rm Ends}_{\infty}(S)$ is a closed subset of ${\rm Ends}(S)$ (see \cite{Ian}*{p. 261}). Consequently, the triple 
$\left(g,{\rm Ends}_{\infty}(S),{\rm Ends}(S)\right)$,
where $g$ is the genus of $S$, is a topological invariant.

\begin{theorem}[Classification of non-compact surfaces \cites{Ker, Ian}]
Two surfaces $S_1$ and $S_2$  having the same genus, are topologically equivalent if and only if there exists a homeomorphism $f: {\rm Ends}(S_1)\to {\rm Ends}(S_2)$ such that $f( {\rm Ends}_{\infty}(S_1))= {\rm Ends}_{\infty}(S_2)$.
\end{theorem}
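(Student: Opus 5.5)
The necessity direction is immediate, and I would dispose of it first. A homeomorphism $h\colon S_1\to S_2$ carries open sets with compact boundary to open sets with compact boundary. It therefore carries defining sequences $(U_n)$ to defining sequences $(h(U_n))$, and it respects the equivalence relation of Definition \ref{section_ends_space}. So it induces a bijection ${\rm Ends}(S_1)\to{\rm Ends}(S_2)$ sending each basic set $U^*$ to $h(U)^*$, hence a homeomorphism. Planarity of a subsurface $\overline{U}_l$ is preserved by $h$, so non-planar ends go to non-planar ends, and genus is preserved since compact subsurfaces go to compact subsurfaces of the same reduced genus.

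For sufficiency I would follow Richards' strategy of reducing to a normal form. \emph{Step 1:} Fix a pair of nested closed subsets $Y\subseteq X$ of the Cantor set $C\subset S^2$. Define the model surface $\Sigma(g,X,Y)$ as follows: remove $X$ from $S^2$ and attach handles. If $g<\infty$, attach $g$ handles in a compact region. If $g=\infty$, attach a sequence of handles accumulating exactly onto $Y$, for instance one handle inside each disk of a nested dyadic covering whose pieces meet $Y$. Then check directly that ${\rm Ends}(\Sigma)\cong X$ with ${\rm Ends}_\infty(\Sigma)\cong Y$. \emph{Step 2:} Show that every surface $S$ is homeomorphic to the model built from its own invariants. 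Take an exhaustion $K_1\subset K_2\subset\cdots$ of $S$ by compact subsurfaces chosen so that each component of $S\setminus K_i$ has non-compact closure, one boundary curve, and is either planar or of infinite genus. The components at each stage then give a partition of ${\rm Ends}(S)$ into clopen sets refining the previous one, and these clopen sets form a basis. Build a matching exhaustion of the model, with the same combinatorics and with compact pieces $K_{i+1}\setminus K_i$ of matching genus and boundary count. Then extend homeomorphisms $K_i\to K_i'$ inductively using the classification of compact surfaces with boundary, and take the union. \emph{Step 3:} Given a homeomorphism of end pairs, realize it as a homeomorphism of Cantor pairs $(X_1,Y_1)\to(X_2,Y_2)$. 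It extends to a homeomorphism of $S^2$ carrying $C\cap X_1$ appropriately. Step 2 then makes the two models homeomorphic, because the handle placement depends only on $Y$ up to isotopy.

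I expect the main obstacle to be the inductive matching in Step 2. We need clopen partitions of the two end spaces that correspond under the given homeomorphism $f$, arise simultaneously from compact subsurfaces in both surfaces, and have equal genus in corresponding compact pieces. My first attempt would be a back-and-forth argument. At odd stages, refine the exhaustion of $S_1$ and transport the induced clopen partition via $f$ to $S_2$. Then enlarge the compact subsurface of $S_2$ until its complementary components realize exactly that partition, adding genus from infinite-genus components (possible precisely because $f$ preserves ${\rm Ends}_\infty$) so the genera agree. At even stages, swap the roles of $S_1$ and $S_2$. Since both exhaustions are cofinal, the union of the stagewise homeomorphisms is a homeomorphism $S_1\to S_2$.
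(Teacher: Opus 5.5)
The paper gives no proof of this statement. It imports the result from Ker\'ekj\'art\'o and Richards and uses it only to recognise the Loch Ness monster as ``infinite genus, one end''. Against the standard exhaustion argument, the part of your proposal that does the work is the back-and-forth in your last paragraph. Once you run it directly between $S_1$ and $S_2$, Steps 1 and 3 are superfluous.

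As written, Step 3 is circular. You claim the two models are homeomorphic ``because the handle placement depends only on $Y$ up to isotopy''. That is exactly the uniqueness assertion you are trying to prove, restricted to model surfaces. Separately, extending a homeomorphism between compact totally disconnected subsets of $S^2$ to all of $S^2$ is a nontrivial theorem, essentially the planar case of the classification, and cannot be taken as routine. Either drop the models, or use Step 2 only in the form ``$S\cong\Sigma(g,X,Y)$ for \emph{any} identification of end pairs''; but that is the theorem itself with $S_2$ a model.

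Within the back-and-forth, the step ``enlarge the compact subsurface of $S_2$ until its complementary components realize exactly that partition'' fails as stated. Enlarging a compact subsurface only refines the induced clopen partition of the ends. In general you overshoot and get several complementary components inside one piece of the transported partition. The missing tool is a merging move. Suppose $V,V'$ are complementary components whose end sets lie in the same piece, and the compact piece inside $\overline{U_j'}$ is chosen connected. Join $\partial V$ to $\partial V'$ by an arc $\alpha$ in that piece, disjoint from $K_i'$, and delete a regular neighbourhood $N(\alpha)$ from the compact set. This fuses $V$, $V'$ and the band into one component with a single boundary curve. It leaves the genus of the compact set unchanged, since $\chi$ rises by one and the number of boundary curves drops by one.

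The genus matching must also be allowed on both sides. To make planar the components whose end sets miss ${\rm Ends}_\infty(S_2)$, you must absorb their genus into the compact piece. That genus is finite, because a complementary component of infinite genus contains a non-planar end. This absorption can push the genus of the $S_2$-piece above that of the $S_1$-piece. You then enlarge $K_{i+1}$ in $S_1$ by handles taken from an infinite-genus complementary component inside the same $U_j$, which exists precisely when $U_j^{*}$ meets ${\rm Ends}_\infty(S_1)$.

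Finally, when $g<\infty$, start the induction with $K_1,K_1'$ each having one boundary curve and containing all the genus. This is the only place the hypothesis $g(S_1)=g(S_2)$ enters. With these additions, your argument becomes the standard Ker\'ekj\'art\'o--Richards exhaustion proof. The stagewise homeomorphisms, with boundary curves matched as $f$ prescribes, come from the classification of compact orientable surfaces with boundary.
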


\begin{definition}[\cites{PSul,ArRa}]
\begin{upshape}
    The \emph{Loch Ness monster} is the unique, up to homeomorphism, orientable surface of infinite genus with exactly one end; see Figure \ref{Fig:LNM}.
   \end{upshape}
 \end{definition}  

\begin{lemma}[\cite{SPE}*{\S 5.1., p. 320}]\label{lemma:spec} 
	The surface $S$ has exactly one end if and only if for every compact subset $K \subset S$ there is a compact subset $K^{'}\subset S$ such that $K\subset K^{'}$ and $S \setminus  K^{'}$ is connected.
\end{lemma}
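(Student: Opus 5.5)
Both directions rest on the same standard fact: in a second countable Hausdorff surface, every compact set is contained in a compact subsurface. This is what allows us to pass between arbitrary compact sets and subsurfaces with finitely many boundary circles.

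\emph{($\Rightarrow$)} Assume $S$ has exactly one end, represented by some sequence $(U_n)$. Let $K\subset S$ be compact. Enlarge $K$ to a compact subsurface $K_1\supset K$. The complement $S\setminus K_1$ has only finitely many components, since its frontier $\partial K_1$ is a finite union of circles and each component of $S \setminus K_1$ must contain one of these circles in its closure.

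Sort the components into two kinds.
\begin{itemize}
\item A component $C$ with compact closure is harmless. We absorb $\overline{C}$ into $K_1$.
\item A component $C$ with non-compact closure determines at least one end. We produce it as follows. Exhaust $S$ by compact sets $K_1\subset K_2\subset\cdots$. In each $C\setminus K_j$, pick a component with non-compact closure, choosing them nested. This gives a sequence satisfying conditions (1)--(3) of Definition~\ref{section_ends_space}.
\end{itemize}
Ends arising from distinct non-compact components are inequivalent. Indeed, their tails lie in disjoint open sets, so neither sequence can be eventually contained in the other.

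Since $S$ has only one end, exactly one non-compact component survives. Set $K'$ to be $K_1$ together with the closures of all the compact components. Then $K'$ is compact, $K\subset K'$, and $S\setminus K'$ is that single component, hence connected.

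\emph{($\Leftarrow$)} Assume the compact-complement property. First, $S$ is non-compact, as is implicit in the setting, so at least one end exists. We obtain it by running the construction above along an exhaustion by compact sets $K_n'$ with connected complements, taking $U_n=S\setminus K_n'$.

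Now let $(U_n)$ and $(V_n)$ be two end sequences. Fix $n$. The boundary $\partial U_n$ is compact, so choose compact $K'\supset\partial U_n$ with $S\setminus K'$ connected. By condition (3) applied to $K'$, there is $j$ with $V_j\cap K'=\emptyset$, and likewise some $U_m\cap K'=\emptyset$.

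The set $S\setminus K'$ is connected and disjoint from $\partial U_n$. It therefore lies entirely inside $U_n$ or entirely inside $S\setminus\overline{U_n}$. It meets $U_m\subset U_n$ (note $U_m\neq\emptyset$), so it lies in $U_n$. Since $V_j\subset S\setminus K'$, we get $V_j\subset U_n$. By symmetry the reverse inclusion also holds, so the two sequences are equivalent and there is exactly one end.

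The main obstacle is the forward direction. There we must verify that every non-compact complementary component really carries an end. This requires the nested choice of non-compact components with compact boundary, and checking conditions (2) and (3) for that choice.
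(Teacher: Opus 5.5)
Your proof is correct. The paper gives no argument for this lemma; it is quoted from Specker. So there is no internal proof to match, and what you have written is a self-contained derivation from the Freudenthal definition (Definition~\ref{section_ends_space}) that the paper actually uses.

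Your write-up also shows that the two directions rest on different inputs. The implication ($\Leftarrow$) is purely point-set. Existence of an end needs only an exhaustion $K_1\subset\operatorname{int}K_2\subset\cdots$ by compact sets, together with inductively chosen $K_n'\supset K_n\cup K_{n-1}'$ having connected complement. Uniqueness needs only that a connected set missing $\partial U_n$ lies in $U_n$ or in $S\setminus\overline{U_n}$. So this half holds for any connected, non-compact, locally compact, $\sigma$-compact Hausdorff space. The implication ($\Rightarrow$) is where the surface structure enters. Compact sets sit inside compact subsurfaces, whose complements have only finitely many components, and this is what lets you absorb the relatively compact components into a compact $K'$.

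A few details to tighten:
\begin{enumerate}
\item In the nested choice, the exhaustion should consist of compact subsurfaces $K_1\subset\operatorname{int}K_2\subset\cdots$ with $\bigcup_j\operatorname{int}K_j=S$, starting from your enlarged $K_1$. Then each component of $S\setminus K_{j+1}$ lies in a single component of $S\setminus K_j$, so $W_j\setminus K_{j+1}$ has finitely many components. One of them must have non-compact closure, since otherwise $\overline{W_j}$ would be compact. Conditions (2) and (3) then follow from $\overline{W_j}\subset S\setminus\operatorname{int}K_j$.
\item In the uniqueness step, choose $m\ge n$ so that $U_m\subset U_n$ really holds.
\item Non-compactness of $S$ is a genuine hypothesis, not a formality. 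For a closed surface, $K'=S$ has empty (hence, under the usual convention, connected) complement, yet the surface has no ends. So the lemma is only true for non-compact $S$, as in the paper's setting.
\end{enumerate}
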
 

\subsubsection{Pair of pants decomposition.} A simple closed curve on a surface $S$ is called \emph{essential} if it is not homotopic to a point, a puncture, or a boundary component of $S$.

A \emph{pair of pants} (or \emph{$Y$-piece}) is a compact surface homeomorphic to a sphere with three open disks removed. Equivalently, it is a compact surface whose boundary consists of three pairwise disjoint simple closed curves. See Figure \ref{fig:pair_of_pants}.
\begin{figure}[!ht]
    \centering
\begin{tikzpicture}[baseline=(current bounding box.north)]
		\begin{scope}[scale=0.9]
		\clip (-4.4,-0.9) rectangle (0.4,2.6);
		\draw [color=red, line width=1pt] (-4,0) ellipse (2mm and 5mm);
		\draw [color=red, line width=1pt] (-2,2.3) ellipse (5mm and 2mm);
		\draw [color=red, line width=1pt]  (0,0) ellipse (2mm and 5mm);
		\draw [line width=1pt](-4,-0.5) to[out=0,in=-180] (0,-0.5);
		\draw [line width=1pt] (-4,0.5) to[out=0,in=-90] (-2.5,2.3);
		\draw [line width=1pt] (-1.5,2.3) to[out=-90,in=180] (0,0.5);
		\end{scope}
		\end{tikzpicture}
    \caption{\em A  pair of pants or $Y$-piece.}
    \label{fig:pair_of_pants}
\end{figure}

A closed subset of a surface is called a \emph{cylinder} if it is homeomorphic to $S^{1}\times [0,\infty)$, where $S^{1}$ denotes the circle.

\begin{definition}
\begin{upshape}
Let $S$ be a surface. A \emph{pants decomposition} of $S$ is a collection of pairwise disjoint simple closed curves such that cutting $S$ along these curves yields a disjoint union of pairs of pants.

Equivalently, a pants decomposition is a maximal collection of pairwise disjoint, non-isotopic, essential simple closed curves on $S$.
\end{upshape}
\end{definition}

\begin{theorem}[\cite{Alvarez2004}*{Theorem 1}]
    Every orientable surface, except the sphere, the plane, and the torus, admits a decomposition into pairs of pants and cylinders with pairwise disjoint interiors.
\end{theorem}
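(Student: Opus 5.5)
The plan is to build the decomposition from a carefully chosen exhaustion of $S$ by compact subsurfaces. First I would decompose each compact piece of the exhaustion into pairs of pants, and then handle the isolated planar ends (punctures) by cylinders. The compact case is handled directly: a closed orientable surface of genus $g\ge 2$ has $\chi<0$ and admits a pants decomposition by $3g-3$ curves. The sphere and torus are excluded precisely because they have $\chi\ge 0$ and no essential curves giving pants. So from now on I assume $S$ is non-compact.

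\textbf{Step 1 (exhaustion).} Using Lemma~\ref{lemma:spec} and its multi-ended analogue (Richards' construction underlying the classification theorem), choose compact subsurfaces $K_1\subset K_2\subset\cdots$ with $\bigcup_n K_n=S$. I would require the following of each $n$:
\begin{enumerate}
\item[(a)] $K_n\subset \operatorname{int}K_{n+1}$;
\item[(b)] every component of $S\setminus \operatorname{int}K_n$ is non-compact and has exactly one boundary curve;
\item[(c)] no component of $\overline{K_{n+1}\setminus K_n}$ is a disk.
\end{enumerate}
Each planar end that is isolated in ${\rm Ends}(S)$ has a neighborhood homeomorphic to $S^1\times[0,\infty)$. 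I would cut such a neighborhood off once and for all, declare it to be one of the cylinders, and remove it from the exhaustion.

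\textbf{Step 2 (pieces).} Each component $P$ of $\overline{K_{n+1}\setminus K_n}$, and also $K_1$, is a compact orientable surface with boundary. By (b) and (c), such a $P$ has $\chi(P)<0$, except when $P$ is an annulus. An annular piece carries no topology, so I would absorb it into an adjacent piece, which does not change the homeomorphism type of that piece. A compact surface with boundary and $\chi<0$ splits into $-\chi(P)$ pairs of pants by cutting along a maximal family of disjoint, non-isotopic, essential curves; this is the classical finite pants decomposition. Taking the union over all pieces of these curves, together with the curves $\partial K_n$ and the boundaries of the cylinders chosen in Step 1, gives pairs of pants and cylinders with pairwise disjoint interiors that cover $S$. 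The union is locally finite, since each compact set meets only finitely many $K_n$.

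\textbf{Main obstacle.} The hard part is arranging the exhaustion in Step 1 so that no disk or annulus pathologies remain. The worry is near $K_1$ and near planar ends: a piece could be a disk, or an annulus that cannot be absorbed. The exceptional cases show this is not automatic. The plane is exhausted only by disks and its single end is an isolated planar end, so its neighborhood would be a cylinder that caps off nothing. For the same reason, a surface that is a finite union of cylinders must contain a pants piece.

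To handle this, I would first choose $K_1$ with $\chi(K_1)<0$. This is possible whenever $S$ has genus $\ge 1$ or at least three ends, or infinitely many ends. The remaining low-complexity non-compact surfaces (a punctured torus-type case, or $S^1\times\mathbb{R}$) would then be checked by hand:
\begin{enumerate}
\item[$\bullet$] $S^1\times\mathbb{R}$ is the union of two cylinders;
\item[$\bullet$] a once-punctured torus is a single pair of pants with two boundary curves glued, plus a cylinder.
\end{enumerate}
The plane is the only case that genuinely fails, which matches the list of exceptions in the statement.
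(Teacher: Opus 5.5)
The paper gives no proof of this statement. It quotes the result from Alvarez--Rodr\'iguez and uses it only as background: for the remark that the Loch Ness monster needs no cylinders, and implicitly in Lemma~\ref{lemma:pants_decomposition}. So there is no in-paper argument to compare with. Your exhaustion argument is the standard self-contained route: a Richards-type exhaustion, classical pants decompositions of the compact layers, and half-open cylinders at isolated planar ends. It is correct in outline. Compared with the bare citation, it only needs the classification of compact surfaces with boundary and the fact that a one-ended planar surface with one boundary curve is $S^1\times[0,\infty)$.

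Several points need tightening.

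\textbf{Removing the cylinders in advance.} This clashes with your condition (b). Once a cylinder $Z$ is deleted, a complementary component of $K_n$ containing it becomes the compact annulus $C\setminus\operatorname{int}Z$. It is cleaner not to remove anything beforehand:
\begin{itemize}
\item Take each $K_n$ connected. With (b), each component of $\overline{K_{n+1}\setminus K_n}$ then contains exactly one curve of $\partial K_n$ and at least one curve of $\partial K_{n+1}$. Hence (c) is automatic, and the only layers with $\chi\ge 0$ are annuli.
\item A finite run of consecutive annuli can be absorbed into the piece below it. This process ultimately ends at $K_1$, which you chose with $\chi(K_1)<0$.
\item An infinite run starting at a curve $c\subset\partial K_n$ fills the whole complementary component of $K_n$ bounded by $c$, since the exhaustion covers $S$. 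That component is therefore $S^1\times[0,\infty)$. This is exactly where the cylinders come from, and it gives their disjointness and local finiteness for free, even with infinitely many isolated planar ends.
\item Local finiteness should be stated as: every compact set lies in some $\operatorname{int}K_n$, hence meets only finitely many layers. It is not true that it meets only finitely many $K_n$.
\end{itemize}

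\textbf{Small errors in the special cases.} The sentence saying that a finite union of cylinders must contain a pants piece is false: $S^1\times\mathbb{R}$ is two cylinders, as your own bullet says. Drop it. Also, the once-punctured torus is not exceptional. A one-holed torus has $\chi=-1$ and can serve as $K_1$, so the only non-compact cases needing separate treatment are genus $0$ with at most two ends, namely the plane and $S^1\times\mathbb{R}$.

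\textbf{Convention for pairs of pants.} State explicitly that a pair of pants in the decomposition may have two of its cuffs glued to the same curve. In other words, the pieces are those obtained by cutting along the curves; the paper uses this convention when it allows $Y_1=Y_2$ in the proof of Lemma~\ref{lemma:pants_decomposition}. This is forced. For the once-punctured torus, an Euler characteristic count gives exactly one pair of pants and one cylinder, so two cuffs must be identified. Your Step~2 also produces such pieces in general, for instance when $K_1$ is a one-holed torus.
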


\begin{remark}
    In the case of the Loch Ness monster surface, no cylinders occur in such a decomposition; the surface is a union of pairs of pants with pairwise disjoint interiors. Indeed, the Loch Ness monster has a single end which is accumulated by genus, while a cylinder in the decomposition would correspond to a planar end of the surface.
\end{remark}

\subsection{Tame translation surfaces}\label{subsection_tame_translation_surface}

Let $S$ be a surface. Unless explicitly stated otherwise, all surfaces are assumed to be without boundary. A \emph{translation structure} on $S$ is a maximal atlas
\[
\mathcal{A}=\{(U_{i},\varphi_{i})_{i\in I}\},
\]
where each chart $\varphi_{i}:U_{i}\to\mathbb{C}$ is a homeomorphism onto an open subset of $\mathbb{C}$, satisfying the following conditions:
    \begin{itemize}
        \item[\textbf{(1)}] The domains of the charts cover $S\setminus {\rm Sing}(S)$, where ${\rm Sing}(S)\subset S$ is a discrete set of points. Every point of ${\rm Sing}(S)$ is non-removable, in the sense that the atlas cannot be extended across it.
        
        \item[\textbf{(2)}] Whenever $(U,\varphi)$ and $(V,\psi)$ are charts with $U\cap V\neq \emptyset$, the transition map
        \[
        \varphi\circ\psi^{-1}: \psi(U\cap V)\to \varphi(U\cap V)
        \]
        is locally a restriction of a translation map, that is
        \[
        \varphi \circ \psi^{-1}(z)=z+c,
        \]
        for a constant $c\in\mathbb{C}$;

        \item[\textbf{(3)}] The atlas $\mathcal{A}$ is maximal with respect to the preceding properties.
    \end{itemize}
The pair $(S,\mathcal A)$ is called a \emph{translation surface}. Whenever the atlas is clear from the context, we simply write $S$ for $(S,\mathcal A)$. We refer the reader to \cites{Ath-Ma,Del-Hub-Val,wright-survey} for background material on translation surfaces.

Every translation surface inherits a natural flat metric by pulling back the complex plane metric via the coordinate charts. Let $\widehat S$ denote the metric completion of $S$ with respect to this flat metric. By the Uniformization Theorem \cite{Abi}*{p. 580}, the only complete translation surfaces (that is, those satisfying $S=\widehat{S}$) are the complex plane, the flat torus, and the flat cylinder (see \cite{FarKra}*{p. 193}).

\begin{definition}[\cite{Val2012}]
\begin{upshape}
A translation surface $S$ is said to be \emph{tame} if for every point
$x\in\widehat{S}$ there exists a neighborhood $U_{x}\subset\widehat{S}$
isometric to one of the following:

	\begin{itemize}
		\item[\textbf{(1)}] an open subset of the complex plane;
		
		\item[\textbf{(2)}] a neighborhood of the ramification point in a finite or infinite cyclic branched covering of the complex unit disk.
	\end{itemize}
    In the second case, if $U_{x}$ is isometric to a cyclic branched covering of finite degree $k_x+1$, then $x$ is called a \emph{finite cone-angle singularity} with cone angle
$\theta(x)=2\pi(k_x+1)$.
If $U_{x}$ is isometric to the infinite cyclic branched covering, then $x$ is called an \emph{infinite cone-angle singularity}.
    \end{upshape}
\end{definition}

We denote by
${\rm Sing}(\widehat{S})$
the set of all finite and infinite cone-angle singularities of $\widehat S$. Elements of ${\rm Sing}(\widehat{S})$ will simply be called \emph{cone singularities} or \emph{cone points}.

\begin{definition}
\begin{upshape}
Let $(S_{1},\mathcal{A})$ and $(S_{2},\mathcal{B})$ be tame translation surfaces. A homeomorphism $T: S_{1}\to S_{2}$ is called an \emph{isomorphism} if the following conditions hold:
\begin{itemize}
    \item[\textbf{(1)}] $T$ maps singularities to singularities and preserves their cone angles;
    
    \item[\textbf{(2)}] the push-forward of the translation structure $\mathcal{A}$ by $T$ coincides with $\mathcal{B}$. Equivalently, for every chart $(U,\varphi)\in\mathcal A$, the chart $(T(U),\varphi\circ T^{-1})$ belongs to the atlas $\mathcal{B}$.
\end{itemize}
Two tame translation surfaces $S_{1}$ and $S_{2}$ are said to be \emph{isomorphic} if there exists an isomorphism between them.
\end{upshape}
\end{definition}

\subsection{Slit constructions}

\begin{definition}[\cites{RamVal , wright-survey}]
\begin{upshape}
Let $m_1$ and $m_2$ be parallel markings of equal length on tame translation surfaces $S_1$ and $S_2$, respectively. Cutting $S_1$ and $S_2$ along $m_1$ and $m_2$ yields translation surfaces with boundary, denoted by $\widetilde{S}_1$ and $\widetilde{S}_2$. Each cut produces two boundary segments, so that the boundaries $\partial\widetilde{S}_1$ and $\partial\widetilde{S}_2$ each contain two copies of the corresponding marking.
Consider the disjoint union of $\tilde{S}_1$ and $\tilde{S}_2$. We can identify the boundary segments arising from $m_1$ with those arising from $m_2$ via translations, making sure to glue ``opposite'' sides. That is, the top of $m_1$ should be glued to the bottom of $m_2$ and vice versa. In doing so, one obtains a connected tame translation surface $S$ (see Figure \ref{Figure_gluing_marks}-a). 
    \begin{figure}[!ht]
		\begin{tabular}{ccc}
			\begin{tikzpicture}[baseline=(current bounding box.north), scale=0.8]
			\begin{scope}
			\clip (0,-0.2) rectangle (8,5.2);
	%%%%%%%%%%%%%%%%%%%%%%%%%%%%%%%%%%%%%%%%%%%%%%%
	\draw [line width=1pt, dashed]  (2,0) -- (6,0);
	\draw [line width=1pt, dashed]  (2,2) -- (6,2);
    \draw [line width=1pt, dashed]  (2,0) -- (2,2);
    \draw [line width=1pt, dashed]  (6,2) -- (6,0);
	\draw [line width=1pt, dashed]  (2,3) -- (6,3);
	\draw [line width=1pt, dashed]  (2,5) -- (6,5);
    \draw [line width=1pt, dashed]  (2,3) -- (2,5);
    \draw [line width=1pt, dashed]  (6,3) -- (6,5);
		        %%%%%%%%%%%%%%%%%%%%%%%%%%%%%%%%%%%%%%
     %\draw [green!30, fill=green!30, line width=1pt] (3.2,1.2) arc
	%[
	%start angle=45,
	%end angle=315,
	%x radius=3mm,
	%y radius =3mm
	%] ;
    %\draw [green!30, fill=green!30, line width=1pt] (4.8,0.8) arc
	%[
	%start angle=-135,
	%end angle=135,
	%x radius=3mm,
	%y radius =3mm
	%] ;
    %\draw [green!30, fill=green!30, line width=1pt] (3.2,4.2) arc
	%[
	%start angle=45,
	%end angle=315,
	%x radius=3mm,
	%y radius =3mm
	%];  
    %\draw [green!30, fill=green!30, line width=1pt] (4.8,3.8) arc
	%[
	%start angle=-135,
	%end angle=135,
	%x radius=3mm,
	%y radius =3mm
	%] ;%%%%%%%%%%%%%%%%%%%%%%%%%%%%%%%%%%%%%%%%%%%%%%%%%%555           
	\draw [color=red, line width=1pt] plot[smooth] coordinates {(3,1)(4,0.8)(5,1)};
	\draw [color=blue, line width=1pt] plot[smooth] coordinates {(5,1)(4,1.2)(3,1)};
	%%%%%%%%%%%%%%%%%%%%%%%%%%%%%%%%%%%%%%%%%%%%
	\draw [color=blue, line width=1pt] plot[smooth] coordinates {(3,4)(4,3.8)(5,4)};
	\draw [color=red, line width=1pt] plot[smooth] coordinates {(5,4)(4,4.2)(3,4)};
	%%%%%%%%%%%%%%%%%%%%%%%%%%%%%%%%
	\draw [purple, line width=1pt, fill=purple] (3,1) circle (0.1);
    \draw [line width=1pt, orange, fill=orange] (5,1) circle (0.1);
    \draw [purple, fill=purple, line width=1pt] (3,4) circle (0.1);
    \draw [line width=1pt, orange, fill=orange] (5,4) circle (0.1);
    %%%%%%%%%%%%%%%%%%%%%%%%%%%%%%%%%%%%%%%
	%%%%%%%%%%%%%%%%%%%%%%%%%%%%%
	%%%%%%%%%%%%%%%%%%%%%%%%%%%%%%%%%%%%%%%%%
	\node[red] at (4,0.4) {$\mathfrak{b}$};
	\node[blue] at (4,1.6) {$\mathfrak{a}$};
	\node[blue] at (4,3.4) {$\mathfrak{a}$};
	\node[red] at (4,4.6) {$\mathfrak{b}$};
	\end{scope}
			\end{tikzpicture}&&
            \begin{tikzpicture}[baseline=(current bounding box.north), scale=1]
			\begin{scope}
			\clip (-0.3,-1.4) rectangle (5.2,3.4);
    %\draw [green!30, fill=green!30, line width=1pt] (1.2,0.05) arc
	%[
	%start angle=15,
	%end angle=355,
	%x radius=2mm,
	%y radius =2mm
	%] ;
    \draw [->, >=latex, color=red, line width=1pt] plot[smooth] coordinates {(1,0)(4,0.2)(5,0.1)};
    \draw [->, >=latex, color=blue, line width=1pt] plot[smooth] coordinates {(1,0)(4,-0.2)(5,-0.1)};

    %\draw [green!30, fill=green!30, line width=1pt] (1.2,1.05) arc
	%[
	%start angle=15,
	%end angle=355,
	%x radius=2mm,
	%y radius =2mm
	%] ;
    \draw [->, >=latex, color=blue, line width=1pt] plot[smooth] coordinates {(1,1)(4,1.2)(5,1.1)};
    \draw [->, >=latex, color=black, line width=1pt] plot[smooth] coordinates {(1,1)(4,0.8)(5,0.9)};

    %\draw [green!30, fill=green!30, line width=1pt] (1.2,2.05) arc
	%[
	%start angle=15,
	%end angle=355,
	%x radius=2mm,
	%y radius =2mm
	%] ;
    \draw [->, >=latex, color=black, line width=1pt] plot[smooth] coordinates {(1,2)(4,2.2)(5,2.1)};
    \draw [->, >=latex, color=brown, line width=1pt] plot[smooth] coordinates {(1,2)(4,1.8)(5,1.9)};
    \draw [black, line width=1pt, fill=black] (1,0) circle (0.07);
    \draw [black, line width=1pt, fill=black] (1,1) circle (0.07);
    \draw [black, line width=1pt, fill=black] (1,2) circle (0.07);
    \node at (3,2.8) {$\vdots$};
    \node at (3,-0.6) {$\vdots$};
    \draw [line width=1pt, dashed]  (-0.2,-1.2) -- (-0.2,3.3);
    \draw [line width=1pt, dashed]  (0,-1.2) -- (5,-1.2);
    \draw [line width=1pt, dashed]  (5,-1.2) -- (5,-0.1);
    \draw [line width=1pt, dashed]  (5,0.2) -- (5,0.9);
    \draw [line width=1pt, dashed]  (5,0.2) -- (5,0.8);
    \draw [line width=1pt, dashed]  (5,1.1) -- (5,1.9);
    \draw [line width=1pt, dashed]  (5,2.1) -- (5,3.3);
    \draw [line width=1pt, dashed]  (0,3.3) -- (5,3.3);
    
    \node[red] at (2,0.5) {$c$};
    \node[blue] at (4.6,-0.4) {$b$};
    \node[blue] at (2,1.5) {$b$};
    \node[black] at (4.6,0.6) {$a$};
    \node[black] at (2,2.5) {$a$};
    \node[brown] at (4.6,1.6) {$d$};

    \node[black] at (0.5,-0.4) {$\gamma_{-1}(0)$};
    \node[black] at (0.5,0.6) {$\gamma_{0}(0)$};
    \node[black] at (0.5,1.6) {$\gamma_{1}(0)$};
    
            \end{scope}
            \end{tikzpicture}\\
            a. Gluing marks.&& b. Gluing rays.\\
	\end{tabular}	
		\caption{\emph{Operation of gluing.}}
		\label{Figure_gluing_marks}
	\end{figure}
    
We denote this identification by
\[
m_{1}\sim_{\mathrm{glue}} m_2
\]
and refer to it as the \emph{gluing of the markings $m_1$ and $m_2$}. The resulting surface is written as
\[
S=(S_1\sqcup S_2)/m_{1}\sim_{\mathrm{glue}}m_{2}.
\]
We say that $S$ is obtained from $S_1$ and $S_2$ by \emph{gluing along $m_1$ and $m_2$}. More generally, the same construction can be performed simultaneously along any finite or countable collection of pairwise parallel markings of equal length.
    
A \emph{ray} on a tame translation surface $S$ is a geodesic $\gamma:[0,\infty)\to S$. The gluing operation extends naturally to finite or countable collections of pairwise parallel rays. In the countable case, let $\{\gamma_n: n\in\mathbb{Z}\}$ be a family of parallel rays on $S$. To ensure that the resulting quotient surface remains tame, we require the set of initial points
$\{\gamma_n(0):n \in\mathbb{Z}\}$
to be discrete in $S$. See Figure \ref{Figure_gluing_marks}-b.
    \end{upshape}
\end{definition}

\subsubsection{Strata of tame translation surfaces}

Let $x\in{\rm Sing}(\widehat{S})$. If $x$ is a finite cone-angle singularity with cone angle
$\theta(x)=2\pi(k_x+1)$,
then
\[
\delta(x)=\theta(x)-2\pi=2\pi k_x
\]
is called the \emph{excess angle} at $x$. In contrast, if $x$ is a infinite cone-angle singularity, its excess angle is $\delta(x)=\infty$.

If $S$ is a compact translation surface, then ${\rm Sing}(S)$ is finite. The Gauss--Bonnet theorem relates the excess angles of the singularities to the Euler characteristic of $S$.

\begin{theorem}[Gauss-Bonnet \cite{Schwartz}*{§17}, \cite{Ath-Ma}*{p.20}]\label{Theorem:Gauss_Bonnet}
	Let $S$ be an compact translation surface $S$, possibly with boundary. Then
	\[
	-\sum\limits_{x\in {\rm Sing}(S)}\delta(x) + \Delta = 2\pi \chi (S).
	\]
	Here $\chi(S)$ is the Euler characteristic of $S$ and $\Delta$ is  the sum of the turning 	angles at the corners of $\partial S$.
\end{theorem}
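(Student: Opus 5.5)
The plan is to prove the formula by a geodesic triangulation and an angle count, in the style of the classical Euler--Descartes argument. First I would produce a triangulation $\mathcal T$ of the compact translation surface $S$ by Euclidean triangles with straight (geodesic) edges. I require that every cone point $x\in{\rm Sing}(S)$ and every corner of $\partial S$ is a vertex, and that $\partial S$ is a union of edges. Let $V_{\rm int}$ and $V_{\partial}$ be the numbers of interior and boundary vertices, $E$ the number of edges, $E_\partial$ the number of boundary edges, and $F$ the number of faces. Then $\chi(S)=V_{\rm int}+V_\partial-E+F$.

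Next come the two counting identities.

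\textbf{Edges.} Each face has three edge-sides. Interior edges are shared by two faces and boundary edges by one, so $3F=2E-E_\partial$. Since $\partial S$ is a finite union of closed polygonal curves, $E_\partial=V_\partial$. Together these give $2E=3F+V_\partial$.

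\textbf{Angles.} I sum the angles of all triangles in two ways. On one hand the sum is $\pi F$. On the other hand, grouping by vertices:
\begin{itemize}
\item an interior vertex $x$ contributes its total cone angle $2\pi+\delta(x)$, where $\delta(x)=0$ at regular points;
\item a boundary vertex $v$ contributes its interior angle $\pi-\tau(v)$, where $\tau(v)$ is the turning angle, and $\tau(v)=0$ at non-corner boundary vertices.
\end{itemize}
Hence
\[
\pi F = 2\pi V_{\rm int}+\sum_{x\in{\rm Sing}(S)}\delta(x)+\pi V_\partial-\Delta .
\]

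Now I substitute. From the edge identity, $2\pi E=3\pi F+\pi V_\partial$, so
\[
2\pi\chi(S)=2\pi V_{\rm int}+2\pi V_\partial-3\pi F-\pi V_\partial+2\pi F=2\pi V_{\rm int}+\pi V_\partial-\pi F .
\]
Inserting the angle identity for $\pi F$ gives $2\pi\chi(S)=-\sum\delta(x)+\Delta$, which is the claimed formula. I will also note that subdividing further, by adding regular vertices, changes neither side of the formula, so any admissible triangulation can be used.

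The main obstacle is the existence of the geodesic triangulation, especially near cone points of large angle and near non-convex boundary corners. I would first try a local construction. Around each cone point $x$, take a small metric disk (or half-disk at a boundary corner) and cut it into Euclidean sectors of angle less than $\pi$, which are then triangulated. The complement of these neighborhoods is a compact flat surface with piecewise geodesic boundary and no singularities, so it can be covered by finitely many small convex Euclidean polygons, since the charts are translations. Common refinement of these pieces yields geodesic convex polygons, and fanning each polygon from one vertex gives the triangulation. If the refinement becomes messy, I would fall back on the standard fact that a compact flat surface with cone points admits a triangulation by saddle connections, in the spirit of Masur--Smillie and Delaunay decompositions.
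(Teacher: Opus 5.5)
Your argument is correct. The paper gives no proof of this statement; it only cites Schwartz and Athreya--Masur. What you wrote is the standard triangulation-and-angle-count proof behind those citations, extended to polygonal boundary.

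The bookkeeping checks out: $3F=2E-E_\partial$, $E_\partial=V_\partial$ and $\pi F=2\pi V_{\rm int}+\sum\delta(x)+\pi V_\partial-\Delta$ combine exactly to $2\pi\chi(S)=-\sum\delta(x)+\Delta$. You also correctly read ${\rm Sing}(S)$ as the interior cone points, with boundary cone points absorbed into $\Delta$. Such points can have interior angle above $2\pi$, hence turning angle below $-\pi$, and your formula $\pi-\tau(v)$ handles them.

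One sentence in your existence sketch is false as stated. The common refinement of overlapping convex polygons need not have convex cells: a small square inside a larger one leaves an annular cell. This does no harm. If the polygons are small enough, each cell lies in a single chart and is a planar polygonal region, which can be triangulated by straight diagonals.

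Alternatively, you can skip the refinement entirely:
\begin{itemize}
\item mark all cone points and corners, adding regular marked points on the boundary and in the interior if needed;
\item start from the boundary segments;
\item extend to a maximal family of saddle connections with pairwise disjoint interiors.
\end{itemize}
The complementary regions of such a family are flat triangles, which gives the triangulation you need.
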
 

When $S$ is a compact surfaces without boundary having genus $g\geq 2$, Gauss--Bonnet's theorem implies:

\begin{theorem}[\cite{Ath-Ma}*{Theorem 2.1.3, p.16}]
    Let $S$ be a compact translation surfaces of genus $g\geq 2$ with singularities $x_{1},\ldots,x_{l}$, for some $l\in\mathbb{N}$, where $x_{i}$ has cone angle $2\pi(k_{i}+1)$, such that $i\in\{1,\ldots,l\}$ and $k_{i}\in\mathbb{N}$. Then
    \begin{equation}\label{eq:excess_euler}
    k_{1}+\ldots+k_{l}=2g-2.
    \end{equation}
\end{theorem}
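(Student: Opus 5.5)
The plan is to specialize the Gauss--Bonnet theorem (Theorem~\ref{Theorem:Gauss_Bonnet}) to the case of a closed surface and then rewrite the excess angles in terms of the integers $k_i$.

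First, I will check that the hypotheses of Theorem~\ref{Theorem:Gauss_Bonnet} hold and that every term in it is finite. Since $S$ is compact and ${\rm Sing}(S)$ is discrete, ${\rm Sing}(S)$ is a finite set; call its points $x_1,\ldots,x_l$. Each $x_i$ has a neighbourhood in the compact surface, so it cannot be an infinite-angle cone point, because an infinite cyclic branched cover of the disk is not locally compact at the branch point. Hence each $x_i$ is a finite cone point of angle $2\pi(k_i+1)$, with $k_i\in\mathbb N$ because the singularities are non-removable ($k_i=0$ would be a regular point). The excess angles are therefore $\delta(x_i)=2\pi k_i$.

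Next, since $S$ has no boundary, there are no corners, so $\Delta=0$. Also $\chi(S)=2-2g$ for a closed orientable surface of genus $g$. Substituting these into the Gauss--Bonnet formula gives
\[
-\sum_{i=1}^{l} 2\pi k_i = 2\pi(2-2g),
\]
and dividing by $-2\pi$ yields $k_1+\cdots+k_l=2g-2$, which is \eqref{eq:excess_euler}.

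Finally, I will remark on the hypothesis $g\geq 2$. It forces $2g-2>0$, so at least one singularity must exist, which makes the statement non-vacuous; the identity itself is consistent for $g=1$ with $l=0$. The only step needing any care is justifying that a compact surface carries only finitely many singularities, all of finite cone angle. This follows from discreteness together with compactness and the local models in the definition of tameness. The rest is direct substitution.
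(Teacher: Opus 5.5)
Your proposal is correct and matches the paper, which obtains $k_1+\cdots+k_l=2g-2$ by specializing the Gauss--Bonnet theorem (Theorem~\ref{Theorem:Gauss_Bonnet}) to a closed surface, with $\Delta=0$, $\delta(x_i)=2\pi k_i$ and $\chi(S)=2-2g$. Your extra check that the singular set is finite and has only finite cone angles is harmless, but it is already part of the hypotheses of the statement.
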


Fix a closed compact topological surface $S_g$ of genus $g\geq 2$, and let $\mathbf{k}=(k_{1},\ldots, k_{l})$ be a partition of $2g-2$, i.e., a sequence of positive integers satisfying \eqref{eq:excess_euler}. Then $\mathcal{H}(k_{1},\ldots, k_{l})$ is defined as the set of isomorphism classes of translation surfaces $(S_g,\mathcal{A})$ whose cone singularities have angles
\[
2\pi(k_{1}+1),\ldots,2\pi(k_{l}+1).
\]

${\rm Sing}(S)=\{x_{1},\ldots,x_{l}\}$ such that $\theta(x_{i})=2\pi(k_{i}+1)$, for each $i\in\{1,\ldots,l\}$. Our notation is symmetric, meaning $\mathcal{H}(k_{1},\ldots, k_{l}) = \mathcal{H}(k_{\pi(1)},\ldots, k_{\pi(l)})=\mathcal{H}(\mathbf{k})$ for any permutation $\pi \in {\rm S}_{l}$. The collection $\mathcal{H}(\mathbf{k})$ is called a \emph{stratum}.

One has then
\[
\mathcal{H}_{g}=\bigsqcup\limits_{\substack{l,\,(k_{1},\ldots, k_{l})\\ k_{1}\leq\ldots\leq k_{l}\\k_{1}+\ldots+k_{l}=2g-2}}\mathcal{H}(k_{1},\ldots, k_{l}).
\]
 The space $\mathcal{H}_{g}$ is called \emph{moduli space of pairs $(S_{g},\mathcal{A})$}. It is a complex algebraic  orbifold of dimension $4g-3$. For more details see \cite{Kont-Zor}.
 
In the non-compact setting, we introduce the following analogue of the classical stratum.

\begin{definition}
\begin{upshape}
Let $(S,\mathcal{A})$ be a tame translation surface, for each $n\in\mathbb{N}$, let ${\rm Sing}_{n}(S)$ be the set of all singularities of cone angle $2\pi (n+1)$ of $S$. Let ${\rm Sing}_{0}(S)$ denote the set of all singularities of infinite cone angle of $S$. The set of all finite and infinite cone-angle singularities of $\widehat{S}$ is written in the form
\[
{\rm Sing}(\widehat{S})=\bigsqcup\limits_{n=0}^{\infty} {\rm Sing}_{n}(S).
\]
For each $n\in\mathbb{N}_{0}$, set 
${\rm card}({\rm Sing}_{n}(S)):=k_{n}$,
where $0\leq k_{n}\leq \aleph_{0}$. The sequence 
\[
\mathbf{k}_{\infty}:=(k_{n})_{n\in\mathbb{N}_{0}}
\]
is called the \emph{singularity data} of $(S,\mathcal{A})$.

\end{upshape}
\end{definition}

\begin{remark}
The following observations are immediate.
\begin{enumerate}
\item[\textbf{(1)}] Let $J_{n}$ be a copy of the compact countable space
$\mathbb {N}_{0}\cup\{\infty\}$,
endowed with the topology inherited from the extended complex plane $\widehat{\mathbb{C}}$. Then the singularity data
$\mathbf{k}_{\infty}=(k_n)_{n\in\mathbb N_0}$
may be regarded as a point of the product space
$\prod\limits_{n\in\mathbb{N}_{0}}J_{n}$.
Since $\mathbb{N}_{0}\cup\{\infty\}$ is homeomorphic to the ordinal space $\omega+1$, each factor $J_{n}$ is homeomorphic to $\omega+1$.

\item[\textbf{(2)}] Isomorphic tame translation surfaces have identical singularity data.

\item[\textbf{(3)}] If $k_{n}=0$ for all $n\in\mathbb{N}_{0}$, then the corresponding translation surface has no cone singularities. Consequently, the only non-compact tame translation surfaces with trivial singularity data are the complex plane and the flat cylinder.
 \end{enumerate}   
\end{remark}

\begin{definition}
\begin{upshape}
Let $S$ be a non-compact surface and let $\mathbf{k}_{\infty}=(k_{n})_{n\in\mathbb{N}_{0}}
\in
\prod\limits_{n\in\mathbb{N}_{0}}J_{n}$. The \emph{non-compact stratum} with the singularity data $\mathbf{k}_{\infty}$ is the set
\[
\mathcal{H}(S,\mathbf{k}_{\infty})
\]
consisting of all isomorphism classes of tame translation structures $(S,\mathcal{A})$ whose singularity data is equal to $\mathbf{k}_{\infty}$.

Equivalently, $[(S,\mathcal{A})]
\in
\mathcal{H}(S,\mathbf{k}_{\infty})$ if and only if ${\rm card}({\rm Sing}_{n}(S))=k_{n}$ for every $n\in\mathbb{N}_{0}$.  

One has then
\[
\mathcal{H}(S)=\bigsqcup\limits_{\mathbf{k}_{\infty}\in \prod\limits_{n=0}^{\infty}J_{n}}\mathcal{H}(S,\mathbf{k}_{\infty}).
\]
The set $\mathcal{H}(S)$ is called \emph{moduli set of pairs $[(S,\mathcal{A})]$}.
\end{upshape}
\end{definition}

\section{Proof of \textbf{Theorem A}}\label{sec:proof_thma}

We begin by establishing several auxiliary lemmas that will be used in the proof of the main theorem.

\begin{lemma}\label{prop:polygonal-reps}
Let $S$ be a tame translation surface without infinite cone-angle singularities. Let $x, y \in S$ and let $[\sigma]$ be a homotopy class of arcs joining $x$ to $y$. Then $[\sigma]$ contains a polygonal representative.
\end{lemma}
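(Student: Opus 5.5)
The plan is to take $\sigma$ to be an arbitrary continuous path in the class, cover it by finitely many ``flat'' neighborhoods using compactness, and then replace each short piece of $\sigma$ by a straight segment inside its neighborhood. We read the homotopy class as homotopy rel endpoints in $S$; if $x$ or $y$ is a cone point of $\widehat S$, the same argument runs in $\widehat S$. Polygonal arcs are allowed to have vertices at finite cone points.

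\textbf{Step 1: neighborhoods.} Because $S$ is tame and has no infinite cone-angle singularities, every point $p$ on the image of $\sigma$ has a neighborhood $U_p$ of one of two kinds. Either $U_p$ is isometric to a Euclidean disc, or $U_p$ is isometric to a neighborhood of the ramification point of a finite cyclic branched cover of the unit disc, i.e.\ a metric disc of some radius $r_p$ around a cone point of angle $2\pi(k+1)$. Shrink each $U_p$ to a metric ball $B(p,r_p)$ of this kind that is contractible. The image of $\sigma$ is compact, so a Lebesgue number argument gives a partition $0=t_0<t_1<\dots<t_N=1$ such that each subarc $\sigma([t_{i-1},t_i])$ lies in a single such ball $B_i$. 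The set of singularities is discrete, so we may refine the partition so that each $B_i$ contains at most one cone point, located at its center.

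\textbf{Step 2: straighten inside each ball.} In a Euclidean disc, connect $\sigma(t_{i-1})$ to $\sigma(t_i)$ by the straight segment. The disc is convex and simply connected, so the straight-line homotopy fixes the endpoints. In a cone ball $B$ around a cone point $c$, $B$ is still contractible, so any two arcs in $B$ with the same endpoints are homotopic rel endpoints. It therefore suffices to give one polygonal arc joining the two points, and the concatenation of the radial segments $\sigma(t_{i-1})\to c$ and $c\to\sigma(t_i)$ works; both are straight in the cone metric. Doing this for every $i$ and concatenating gives a piecewise geodesic arc homotopic to $\sigma$ rel endpoints, since the pieces are glued along the fixed points $\sigma(t_i)$.

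\textbf{Step 3: check the result is polygonal.} Each piece is a finite union of straight segments in translation charts, or in the cone chart away from the apex, so the whole arc is polygonal. The step I expect to need the most care is the cone-point case. The reason is that homotopy here must be taken in $S$ itself, which is the punctured surface, rather than in $\widehat S$. If that is the intended meaning, a path through $c$ is not allowed, and $B\setminus\{c\}$ is an annulus, so the winding number of $\sigma$'s piece around $c$ matters. In that case I would work in the cyclic cover picture. I would lift the piece to the universal cover of the punctured ball, which is an infinite sector strip with polar coordinates $(r,\theta)$, $\theta\in\mathbb R$. I would then replace the piece by a polygonal path that travels along short segments on a circle of small radius, turning through the same total angle. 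Each such chord subtends an angle less than $\pi$, which keeps it inside a Euclidean sector chart. The homotopy is taken radially in the lifted coordinates. This path avoids $c$ and realizes the correct homotopy class, so the lemma holds under either reading.
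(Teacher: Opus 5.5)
Your proof is correct and follows essentially the paper's route: compactness gives finitely many flat or cone charts covering $\sigma([0,1])$, each subarc is straightened inside its chart, and the pieces are concatenated rel the partition points. The paper handles cone points by cutting $\sigma$ at the singularities it meets, and your cone-ball case covers that. Your use of convex metric discs, and of the two radial segments through the apex, is actually more careful than the paper. The paper asserts that a straight segment exists in any simply connected chart image, which is false in general and false in a cone chart when the angular separation is at least $\pi$. Your Step 3 is unnecessary under the paper's reading, since there finite cone points belong to $S$ and $\sigma$ may pass through them.
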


\begin{proof}
Let $\sigma : [0,1]\to S$ be a representative of $[\sigma]$. 

\emph{Case 1.} Assume that $\sigma$ avoids $\text{Sing}(S)$ except possibly at its endpoints.

Since $\sigma([0,1])$ is compact, there exists a finite collection of tame translation charts
\[
(U_0,\varphi_0),\ldots,(U_m,\varphi_m)
\]
covering $\sigma([0,1])$. By refining this cover if necessary, we may choose a partition
\[
0=t_0<t_1<\cdots<t_{m+1}=1
\]
such that $\sigma([t_i,t_{i+1}])\subset U_i$ for each $i \in \{0, \ldots, m\}$ and $\sigma(t_i)\in U_{i-1}\cap U_{i}$ for each $i \in\{1,\ldots,m\}$.

For each $i$, the chart $\varphi_{i}:U_{i}\to V_{i}$ is an isometry onto a simply connected open subset (either of the complex plane $\mathbb{C}$ or $S'$ the ramification point in a finite cyclic branched covering of the complex unit disk). Hence, the segment in $\mathbb{C}$ (or $S$', respectively) joining $\varphi_i(\sigma(t_i))$ to $\varphi_i(\sigma(t_{i+1}))$ is well defined and lies in $V_i$. Pulling it back by $\varphi_i^{-1}$ yields a geodesic segment $\gamma_i \subset U_i$ joining $\sigma(t_i)$ to $\sigma(t_{i+1})$. In particular, each $\gamma_i$ is a polygonal arc and is homotopic to $\sigma|_{[t_i,t_{i+1}]}$ relative endpoints inside $U_i$.

We define
\[
\gamma := \gamma_0 * \gamma_1 * \cdots * \gamma_{m} ,
\]
where $*$ denotes concatenation of paths. Then $\gamma$ is a polygonal curve from $x$ to $y$.

Since each $\gamma_i$ is homotopic to $\sigma|_{[t_i,t_{i+1}]}$ relative endpoints, the concatenation $\gamma$ is homotopic to $\sigma$ relative endpoints. Therefore $[\sigma]$ contains a polygonal representative.

\emph{Case 2.} Assume now that the curve $\sigma$ passes through singular points $x_{1},\ldots,x_{\ell}$ of $S$. Then there exist parameters
\[
0=t_0 < t_1 < \cdots < t_\ell < t_{\ell+1}=1
\]
such that $\sigma(t_j)=x_j$ for each $j\in\{1,\ldots,\ell\}$. Set $\sigma_j := \sigma|_{[t_j,t_{j+1}]}$. Each subarc $\sigma_j$ is a curve with endpoints in (possibly singular) points of $S$, but whose interior contains no singularities. Hence, by the previous case, for each $j\in\{0,\ldots,\ell\}$ there exists a polygonal curve $\gamma_j$ homotopic to $\sigma_j$ relative endpoints.

We define the concatenation
\[
\gamma := \gamma_0 * \cdots * \gamma_\ell.
\]
Then $\gamma$ is a polygonal curve with endpoints $\sigma(0)$ and $\sigma(1)$. Moreover, since homotopies relative endpoints are preserved under concatenation, it follows that $\gamma$ is homotopic to $\sigma$ relative endpoints.
\end{proof}

\begin{corollary}\label{prop:polygonal-closed}
Let $S$ be a tame translation surface without infinite cone-angle singularities. Every homotopy class of simple closed curve of $S$ contains a simple closed polygonal representative.
\end{corollary}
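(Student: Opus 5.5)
The plan is to reduce the corollary to Lemma \ref{prop:polygonal-reps}, and then remove self-intersections by a local surgery that stays inside the homotopy class.

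First I will pass from a closed curve to an arc. Let $\alpha$ be a simple closed curve and fix a base point $x=\alpha(0)$. If possible, choose $x$ to be a regular point, or a singular point that $\alpha$ already passes through; since $\mathrm{Sing}(S)$ is discrete, such a choice exists. Viewing $\alpha$ as an arc from $x$ to $x$, Lemma \ref{prop:polygonal-reps} gives a polygonal arc $\gamma$ homotopic to $\alpha$ relative to endpoints. Hence $\gamma$ is a polygonal closed curve freely homotopic to $\alpha$.

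Second, I will make the representative controllable. The proof of the lemma builds $\gamma$ from geodesic segments inside small charts $U_i$ covering $\alpha$. Taking the cover fine enough, each segment $\gamma_i$ stays in a thin neighborhood of the corresponding subarc $\alpha|_{[t_i,t_{i+1}]}$. A small perturbation of the interior vertices inside these charts, which does not change the homotopy class, puts $\gamma$ in general position. Then $\gamma$ has finitely many transverse self-intersections at interior points of segments, and it avoids singularities other than possibly $x$. There are only finitely many crossings because a polygonal curve has finitely many segments, and two distinct non-collinear segments in a chart meet at most once.

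Third, and this is the main obstacle, I must make $\gamma$ simple without changing its free homotopy class. I would not do this by naive loop removal, since that changes the class. Instead I use the closeness from the second step. Because $\gamma$ lies in an arbitrarily thin regular neighborhood $N$ of $\alpha$, it suffices to find a simple closed polygonal curve inside $N$ isotopic to $\alpha$. The neighborhood $N$ is an annulus, since $S$ is orientable and $\alpha$ is simple. I first cover $\alpha$ by finitely many embedded charts $U_i$ whose pairwise intersections are connected and arranged cyclically along $\alpha$. Then I take one point $p_i\in\alpha\cap U_{i-1}\cap U_i$ for each $i$, and join consecutive points by the geodesic segment in the chart $U_i$. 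If the cover is fine enough, meaning the mesh is smaller than the injectivity scale along $\alpha$ and smaller than half the distance between the non-adjacent pieces $\alpha|_{[t_i,t_{i+1}]}$, then non-adjacent segments are disjoint. Adjacent segments meet only at their common vertex $p_i$, because they are non-collinear straight segments inside a single chart. The resulting curve is therefore simple. It is homotopic to $\alpha$ by the straight-line homotopy in each chart, exactly as in the proof of Lemma \ref{prop:polygonal-reps}.

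If $\alpha$ passes through a finite-angle singularity, that point is simply taken as a vertex $p_i$, and the chart around it is a branched-cover chart. The only delicate point is choosing the cover so that each segment lies in a single chart, and near a cone point this uses the conical geometry rather than a flat disk. So I will state and check this finiteness and compactness argument carefully, and leave the rest routine.
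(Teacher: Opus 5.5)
Your Step 1 is exactly what the paper intends. The corollary is stated without proof, as a direct consequence of Lemma~\ref{prop:polygonal-reps} applied to $\alpha$ viewed as a loop based at a point. As you correctly observe, that only produces a polygonal closed curve in the class, not a simple one. So everything rests on your Step 3, and that step has a genuine gap.

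The fineness condition you impose cannot be met. The pieces $\alpha|_{[t_i,t_{i+1}]}$ and $\alpha|_{[t_{i+2},t_{i+3}]}$ are non-adjacent, yet their distance is at most $d(\alpha(t_{i+1}),\alpha(t_{i+2}))$, which is at most the mesh. Requiring the mesh to be less than half the distance between any two non-adjacent pieces would therefore force mesh $<$ mesh$/2$.

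No other threshold rescues the claim, because for a merely continuous simple closed curve, fine inscribed polygons need not be simple. In a flat chart, the points $(0,0),(2,0),(1,\tfrac1{10}),(1,-1)$ lie in this order on a simple arc: run along the first segment, come back over it down to $(1,\tfrac1{10})$, then go around its left end to $(1,-1)$. The inscribed segments $[(0,0),(2,0)]$ and $[(1,\tfrac1{10}),(1,-1)]$ cross. A simple closed curve can carry scaled copies of this hook at arbitrarily small scales, so no choice of mesh makes every fine inscribed polygon simple. Note also that the general-position curve $\gamma$ from Step 2 is never actually used in Step 3.

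What is missing is a regularity step before inscribing. For a $C^1$ regular embedded curve $\beta$ avoiding ${\rm Sing}(S)$, your construction works once the separation condition is corrected:
\begin{itemize}
\item There is $r>0$ such that every subarc of length $<r$ lies in a flat chart and is a graph over its initial tangent line.
\item Set $\rho:=\min\{d(\beta(s),\beta(t)) : s,t \text{ at parameter distance} \ge r/2\}$, which is positive by compactness and embeddedness.
\item Take arc-length mesh below $\min(r,\rho)/4$. Segments from parameter-close pieces then form a graph over a line, so they meet only at shared vertices. Segments from parameter-far pieces are at distance at least $\rho-\min(r,\rho)/4>0$.
\end{itemize}
The correct version of your ``non-adjacent'' condition is thus a fixed parameter gap, not mere non-adjacency.

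So you should first isotope $\alpha$, inside its annular neighbourhood $N$ and pushed off the discrete set of cone points, to a smooth simple closed curve; this is a standard consequence of the Schoenflies theorem. Alternatively, triangulate $S$ by geodesic triangles with the cone points among the vertices, and invoke the PL approximation theorem for embedded curves in surfaces. That yields a simple polygonal curve isotopic to $\alpha$ directly.

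If you prefer to salvage Step 2, proceed as follows. Take $\gamma\subset N$ polygonal, in general position and homotopic to the core within $N$. Let $U$ be the component of $N\setminus\gamma$ containing one end of $N$, and take the level curve in $U$ at small constant distance from $\gamma$, with its circular arcs replaced by chords. This is a simple closed polygonal curve separating the two ends of $N$, hence isotopic to the core.
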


The next result provides polygonal representatives for a pants decomposition of the Loch Ness monster.

\begin{lemma}\label{lemma:pants_decomposition}
    Let $S$ be a tame translation structure the Loch Ness monster without singularities of infinite cone angle. Then there exists a pants decomposition $P = \{\gamma_n\}_{n \in N}$ of $S$ where the $\gamma_n$ are pairwise disjoint essential polygonal simple closed curves.
\end{lemma}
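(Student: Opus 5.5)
Start from a topological pants decomposition and straighten its curves one at a time, by induction over a compact exhaustion. By the theorem of \cite{Alvarez2004} and the remark following it, the Loch Ness monster admits a pants decomposition $P_0=\{\alpha_n\}_{n\in\mathbb N}$ by pairwise disjoint essential simple closed curves, with no cylinders. This family is locally finite: every compact set meets only finitely many $\alpha_n$. The aim is to replace each $\alpha_n$ by an isotopic polygonal simple closed curve $\gamma_n$ so that the $\gamma_n$ stay pairwise disjoint. Since isotopic disjoint curves cut $S$ into homeomorphic pieces, $\{\gamma_n\}$ is then again a pants decomposition.

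\textbf{Step 1: straighten one curve.} Corollary \ref{prop:polygonal-closed}, which rests on Lemma \ref{prop:polygonal-reps}, gives each $\alpha_n$ a simple closed polygonal representative. I want more than a homotopic representative, though: I want the polygonal curve to lie inside a prescribed annular neighbourhood $A_n$ of $\alpha_n$, with the $A_n$ chosen pairwise disjoint (possible by local finiteness). For this I rerun the proof of Lemma \ref{prop:polygonal-reps} with charts chosen inside $A_n$. Because $\text{Sing}(S)$ is discrete, I can first isotope $\alpha_n$ to avoid singular points. I then cover it by small convex chart disks contained in $A_n$, and replace each sub-arc by the straight segment in its disk. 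If the partition is fine enough, the resulting polygon is $C^0$-close to $\alpha_n$. Any self-intersections can be removed by local surgery at the finitely many crossing points, or avoided by perturbing vertices into general position; this yields a simple closed polygon inside $A_n$. It is not null-homotopic in $A_n$, since it is close to $\alpha_n$ and therefore carries the core class of the annulus. So it is isotopic to $\alpha_n$, which also makes it essential in $S$.

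\textbf{Step 2: assemble.} Because the $A_n$ are pairwise disjoint, the resulting curves $\gamma_n\subset A_n$ are automatically pairwise disjoint, and no induction-ordering issue arises. The hypothesis that there are no infinite cone-angle singularities enters in two places. It ensures that every point of $S$ has a neighbourhood that is flat or a finite branched disk, so local straight segments exist. It is also exactly the hypothesis of Lemma \ref{prop:polygonal-reps}.

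\textbf{Main obstacle.} I expect the hard part to be Step 1, specifically producing a \emph{simple} polygon that is isotopic to $\alpha_n$ and not merely homotopic to it. I would first try to keep each straightened segment inside a convex flat disk. Adjacent segments can then meet only near their common vertex, and non-adjacent segments are disjoint because they are $C^0$-close to disjoint sub-arcs of the simple curve $\alpha_n$. With simplicity in hand, Baer–Epstein (homotopic simple essential closed curves are isotopic) supplies the isotopy.
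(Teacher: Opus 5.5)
Your proposal is correct and follows essentially the paper's argument. The paper likewise fixes a pants decomposition whose curves avoid the cone points (it builds one from a compact exhaustion rather than quoting the decomposition theorem), passes to pairwise disjoint neighbourhoods $\mathcal N_{\epsilon_n}(\gamma_n)$ of the curves, and straightens each curve inside its own neighbourhood using Corollary~\ref{prop:polygonal-closed}.

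Your handling of simplicity and isotopy is more careful than the paper's, but one step needs adjusting. A fine inscribed polygon of an arbitrary topological simple curve need not be simple. So either first isotope $\alpha_n$ to a smooth curve in the flat part of $A_n$ before inscribing, or resolve the crossings and keep a component that is essential in $A_n$; such a component exists because the classes of the components sum to the core class.
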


\begin{proof}
    First we will show that there is a pants decomposition $P= \{\gamma_n\}_{n\in \N}$ of $S$ where for each $n \in \N$, there exists an $\epsilon_n>0$ such that the neighborhood $\mathcal N_{\epsilon_n}(\gamma_n)$ is disjoint from the other pants curves $P\setminus \{\gamma_n\}$.

    Indeed, since $S$ is one-ended, we can choose a nested exhaustion of $S$ by compact sets $K_1 \subseteq K_2 \subseteq K_3 \subseteq \cdots$ such that $K_1 \subseteq \text{int}\:K_2 \subseteq \text{int}\:K_3 \subseteq\cdots$, and such that each $K_m$ is a compact, connected subsurface of finite type with $U_m =S\setminus K_m$ connected. Given that $\sing(S)$ is discrete, we can choose these compact sets $K_m$ in such a way that for each $m \in \N$, the boundary $\partial K_m$ is a multicurve in $S$ avoiding $\sing(S)$. This guarantees that $A_m:= K_{m+1}-\text{int}\:(K_m)$ does not contain any cone points on its boundary.

    Let $\ell \in \{1,2, 3,\ldots\}$ be the minimum index for which the subsurface $K_\ell$ has negative Euler characteristic. As $K_\ell$ is a surface of finite type with negative Euler characteristic, it has a pants decomposition relative to its boundary (i.e., including its boundary curves) which avoids all cone points.

    Now suppose that we have a pants decomposition $P_m = \{\gamma_n\}_{n=1}^{n_m}$ of $K_m$ relative to its boundary such that all curves in the decomposition avoid cone points. Consider $A_n = K_{m+1}-\text{int}\:K_m$.
    \begin{itemize}
        \item If $A_m$ is a finite union of cylinders, we do nothing. The boundary curves in $\partial K_m$ and $\partial K_{m+1}$ are homotopic, and so in this step we add no new curves to our pants decomposition of $S$.

        \item Otherwise, at least one component of $A_m$ has negative Euler characteristic, and so we choose a pants decomposition of $A_m$ relative to the boundary avoiding all cone points. Add these curves to our pants decomposition $\{\gamma_n\}$.
    \end{itemize}
    By induction, we get a pants decomposition $P = \cup_{m=\ell}^\infty P_m$ on all of $S$. 

    Consider a curve $\gamma_n$ in the pants decomposition $P$ of $S$. By construction $\gamma_n$ does not contain any cone points, and it is on the boundary of at most two pairs of pants, $Y_1$ and $Y_2$, where perhaps $Y_1=Y_2$. Let $Q$ be the finite collection of boundary curves of $Y_1$ and $Y_2$, excluding the curve $\gamma_n$, and let
    \[
    \epsilon_n = \frac12 \min_{x\in \gamma_n}d(x, Q).
    \]
    By construction the $\epsilon_n$-neighborhood $\mathcal N_{\epsilon_n}(\gamma_n)$ is disjoint from $Q$ and in general from $P\setminus \{\gamma_n\}$. In fact, for any two $n, m \in \{1, 2, 3, \ldots\}$ with $m \neq n$, we have $\mathcal N_{\epsilon_n}(\gamma_n) \cap \mathcal N_{\epsilon_m}(\gamma_m) = \emptyset$. For each $n \in \{1, 2, 3, \ldots\}$, we can apply Lemma~\ref{prop:polygonal-closed} within the annulus $\mathcal N_{\epsilon_n}(\gamma_n)$ to find a polygonal representative in the free homotopy class $[\gamma_n]$ which remains in $\mathcal N_{\epsilon_n}(\gamma_n)$, proving the lemma.
\end{proof}

\begin{lemma}\label{lemma:collar}
Let $\gamma$ be an essential simple closed polygonal curve in a pants decomposition as described in Lemma~\ref{lemma:pants_decomposition}. Suppose that $\gamma$ is contained in an isometrically embedded Euclidean collar. Then there exists a translation chart $(U,\psi)$ defined on a neighborhood of $\gamma$ such that $\psi\circ \gamma$ is a polygonal curve in $\mathbb{C}$.
\end{lemma}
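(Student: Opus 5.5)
The plan is to build the chart $\psi$ by \emph{developing} the collar into $\mathbb{C}$. Then $\psi\circ\gamma$ is polygonal because every segment of $\gamma$ is a geodesic segment, and translations send straight segments to straight segments.

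Let $C$ be the isometrically embedded Euclidean collar containing $\gamma$. By hypothesis $C$ is isometric to a flat annulus
\[
A_{v,\varepsilon}=\{\,z\in\mathbb{C} : |\mathrm{Im}(z\,\overline{v}/|v|)|<\varepsilon\,\}/\langle z\mapsto z+v\rangle
\]
for some holonomy vector $v\in\mathbb{C}\setminus\{0\}$ and some $\varepsilon>0$. Since $C$ contains no cone points (the curves of Lemma~\ref{lemma:pants_decomposition} avoid $\sing(S)$), the translation atlas $\mathcal{A}$ restricted to $C$ is a genuine translation structure, with transition maps $z\mapsto z+c$.

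\textbf{Step 1.} Cover the compact curve $\gamma$ by finitely many charts $(U_0,\varphi_0),\ldots,(U_m,\varphi_m)$ of $\mathcal{A}$ contained in $C$. Choose a partition $0=t_0<\cdots<t_{m+1}=1$ as in the proof of Lemma~\ref{prop:polygonal-reps}, refined so that every vertex of $\gamma$ is one of the $t_i$. Each restriction $\gamma|_{[t_i,t_{i+1}]}$ is then a single straight piece, so $\varphi_i\circ\gamma|_{[t_i,t_{i+1}]}$ is a straight segment in $\mathbb{C}$.

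\textbf{Step 2.} Modify the charts inductively by translations, $\varphi_i\mapsto\varphi_i+c_i$, so that consecutive charts agree on the component of $U_{i-1}\cap U_i$ containing $\gamma(t_i)$. This is possible because the transition map there is a single translation. Compatibility with $\mathcal{A}$ is preserved, since translated charts are still in the maximal atlas.

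\textbf{Step 3.} Define $U=\bigcup_i U_i$, shrunk to a thin tubular neighbourhood of $\gamma$ inside $C$, and let $\psi$ be $\varphi_i+c_i$ on $U_i$. The composite $\psi\circ\gamma$ is then a concatenation of straight segments in $\mathbb{C}$, i.e.\ a polygonal curve.

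The main obstacle is making $\psi$ single-valued on the last overlap $U_m\cap U_0$. There the accumulated translation is exactly the holonomy $v$ of the collar, which is nonzero because $\gamma$ is essential in a flat cylinder. I would resolve this by using the explicit model $A_{v,\varepsilon}$: the collar isometry identifies $C$ with $A_{v,\varepsilon}$, and composing with the projection from $\mathbb{C}$ realises $\psi$ as the developing map of $C$, agreeing with the $\varphi_i+c_i$ up to translation. Its restriction to $\gamma$ is the polygon traced by $\gamma$ in the strip. In the worst case, $\psi$ is a local isometric chart near every point of $\gamma$ and is single-valued only up to the deck translation $z\mapsto z+v$. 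I will check that this is exactly the form in which the lemma is used later, namely to read off that $\psi\circ\gamma$ is polygonal.
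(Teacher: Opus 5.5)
Your construction is the paper's: finitely many charts along $\gamma$, post-composed with translations so that consecutive ones agree, then pasted.

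The paper asserts that ``the Pasting Lemma guarantees $\psi$ is well-defined'' on $U=\bigcup_i U_i$. However, it only matches $\psi_{i-1}$ with $\psi_i$ for $2\le i\le m$. It never compares $\psi_m$ with $\psi_1$ on $U_m\cap U_1$, which contains $\gamma(0)=\gamma(1)$. That is exactly the obstacle you found, and you should not try to remove it, because it cannot be removed.

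Suppose $(U,\psi)$ were a genuine translation chart with $\gamma\subset U$. Then $\psi-(\varphi_i+c_i)$ is locally constant on $U\cap U_i$, hence constant along the connected set $\gamma([t_i,t_{i+1}])$. By your Step~2 these constants agree at each $\gamma(t_i)$, so they all coincide. This forces $(\varphi_m+c_m)(\gamma(1))=(\varphi_0+c_0)(\gamma(0))$, i.e.\ the holonomy $\mathrm{hol}(\gamma)=\int_\gamma dz$ vanishes.

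In your flat-cylinder model $\mathrm{hol}(\gamma)=\pm v\neq 0$. So the statement as literally written is false for every curve your model allows, and no argument, yours or the paper's, can produce a single-valued $\psi$. What you and the paper actually prove is the developing-map version:
\begin{itemize}
\item[(i)] $\psi$ is a translation chart near every point of $\gamma$, single-valued on the universal cover of the collar (in your model, on the strip);
\item[(ii)] $\psi$ changes by $z\mapsto z+\mathrm{hol}(\gamma)$ once around $\gamma$;
\item[(iii)] $\psi\circ\gamma$ is a polygonal path from $\tilde\gamma(0)$ to $\tilde\gamma(0)+\mathrm{hol}(\gamma)$.
\end{itemize}
This is precisely how the result is used later. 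The next lemma explicitly treats the case $\tilde\gamma(0)\neq\tilde\gamma(1)$, which could never occur for a single-valued chart. So state the lemma in this form rather than deferring the check.

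Two further remarks.

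First, drop the model $A_{v,\varepsilon}$. The paper's own proof treats the collar merely as a singularity-free annular neighbourhood $\bigcup_i U_i$ of $\gamma$, whereas a straight flat cylinder forces $\mathrm{hol}(\gamma)\neq 0$. That would rule out every separating pants curve: on the Loch Ness monster such a curve bounds a compact subsurface on which $dz$ extends to a holomorphic $1$-form, so its holonomy is $0$ by Stokes. It would also make the Jordan-curve case of the next lemma vacuous. Your Steps~1--2 by themselves build the developing map along $\gamma$ in any singularity-free annular neighbourhood. It closes up to a single-valued map exactly when $\mathrm{hol}(\gamma)=0$. Even then it is injective on a thin neighbourhood, hence a chart in the paper's sense, only when the developed closed polygon is simple.

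Second, your Step~2 matches charts only on the component of $U_{i-1}\cap U_i$ containing $\gamma(t_i)$. This is more careful than the paper, which claims $\psi_{i-1}=\psi_i$ on the whole overlap even though transition maps are only locally translations.
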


\begin{proof}
The compact subspace $\gamma([0,1])$ of $S$ there is covering for a finite number of charts 
\[
(U_{1},\varphi_{1}),\ldots,(U_{m},\varphi_{m})
\]
of the tame translation structure of $S$, for some $m\in\mathbb{N}$. We can assume the following facts:

\begin{itemize}
\item[\textbf{(1)}] the intersection $U_{i}\cap U_{i+1}\neq \emptyset$, for each $i\in\{1,\ldots,m-1\}$;
\item[\textbf{(2)}] $\gamma(0)\in U_{1}$ and $\gamma(1)\in U_{m}$;
\item[\textbf{(3)}] the union $\bigcup\limits_{i=1}^{m}U_{i}$ is a collar containing the image $\gamma([0,1])$,  without singular points in its interior and no closed polygonal curves of the pairs of pants descomposition of $S$ either. 
\end{itemize}

Recall that the charts $(U_{i},\varphi_{i})$ and $(U_{i+1},\varphi_{i+1})$ intersect the image $\gamma([0,1])$ and the transition map is locally in the form 
\[
\varphi_{i+1}\circ \varphi_{i}^{-1}(z)=z+c,
\]
for some constant $c\in \mathbb{C}$. This implies the following facts
\begin{itemize}
    \item[\textbf{(1)}] For each $x\in U_{i}\cap U_{i+1}$, the direction  of the curve polygonal curve $\gamma\circ \varphi_{i}$ and $\gamma\circ\varphi$ coincides, except in the corners.
    \item[\textbf{(2)}] Globally, the tangent vector is well-defined (where it exists) along the curve $\gamma$, meaning the tangent vector does not depend on the chart $(U_{i},\varphi_{i})$.
\end{itemize}

\textbf{Building the chart $(U,\psi)$.} By induction on $i\in\{1,\ldots,m\}$. For $i=1$, we take the chart $(U_{1},\psi_{i}=\varphi_{1})$. For $i=2$, given that the translation maps are restriction maps of translation, we can modify (by post-composing) the chart $(U_{2},\varphi_{2})$ by a suitable translation map $T_{2}:\mathbb{C}\to \mathbb{C}$ such that we obtain a new chart $(U_{2}, \psi_{2}=T_{2}\circ \varphi_{2})$, which is compatible with the tame translation structure of $S$ and  
\[
\psi_{1}(x)=\psi_{2}(x).
\] 
for each point $x\in U_{1}\cap U_{2}$. In particular, the above equality holds for each $x\in \gamma([0,1])\cap (U_{1}\cap U_{2})$. For each $2\leq i \leq m$, given the translation maps are restriction maps of translation, we can modify (by post-composing) the chart $(U_{i},\varphi_{i})$ by a suitable translation map $T_{i}:\mathbb{C}\to\mathbb{C}$ such that we obtain a new chart $(U_{i},\psi_{i}=T_{i}\circ \varphi_{i})$, which is compatible with the tame translation structure of $S$ and
\[
\psi_{i-1}(x)=\psi_{i}(x).
\] 
for each point $x\in U_{i-1}\cap U_{i}$. In particular, the above equality holds for each $p\in \gamma([0,1])\cap (U_{i-1}\cap U_{i})$. We then define the chart $(U,\psi)$ where 
\[
U=\bigcup\limits_{i=1}^{m}U_{i} \quad \text{ and } \quad
\psi|_{U_{i}}=\psi_{i}
\]
for each $i\in\{1,\ldots,m\}$. Note the Pasting Lemma guarantees $\psi$ is well-defined. Thus, the chart $(U,\psi)$ is on the tame translation structure of $S$ and the image $\psi\circ\gamma$ is a simple polygonal curve of $\mathbb{C}$.  
\end{proof}

\begin{lemma}
Let $\gamma$ be a closed polygonal curve and $\psi$ a chart as in Lemma~\ref{lemma:collar}, and let
$\tilde{\gamma}=\psi\circ\gamma$.
Denote by $\Delta$ the sum of the turning angles at the corners of the closed polygonal curve $\gamma$. Then
\[
\Delta=
\begin{cases}
2\pi, & \text{if }\tilde{\gamma}\text{ is a simple closed polygonal curve},\\
0, & \text{if }\tilde{\gamma}\text{ is a polygonal arc}.
\end{cases}
\]
\end{lemma}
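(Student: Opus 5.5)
The plan is to reduce the statement to a computation in $\mathbb{C}$ with the developing image $\tilde\gamma=\psi\circ\gamma$ from Lemma~\ref{lemma:collar}.

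\textbf{Step 1: turning angles can be computed in the chart.} Translations preserve directions, and by the remarks in the proof of Lemma~\ref{lemma:collar} the tangent direction along $\gamma$ does not depend on the chart. So at each corner $\gamma(t_j)$ the signed turning angle of $\gamma$ equals the signed turning angle of the polygonal path $\tilde\gamma$ at $\tilde\gamma(t_j)$. Write $0=t_0<t_1<\cdots<t_N=1$ for the corner parameters and $v_j\in\mathbb{C}\setminus\{0\}$ for the direction vector of the $j$-th edge of $\tilde\gamma$. Then
\[
\Delta=\sum_{j}\arg\bigl(v_{j+1}/v_j\bigr),
\]
with each term taken in $(-\pi,\pi)$. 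This includes the closing corner at $\gamma(0)=\gamma(1)$, where $v_{N+1}:=v_1$, because $\gamma$ is closed and its tangent is globally defined.

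\textbf{Step 2: the holonomy trichotomy.} Since $\gamma$ is a closed curve in a translation surface, $\tilde\gamma(1)-\tilde\gamma(0)$ equals the holonomy vector of $\gamma$. If this vector is $0$, then $\tilde\gamma$ closes up. I would argue that it is then a simple closed polygon in $\mathbb{C}$: $\psi$ is injective on a neighbourhood of $\gamma$ away from the identification at the endpoints, because the collar is isometrically embedded and the developing map of an annulus with trivial holonomy is injective on a thin neighbourhood of the core. The Hopf Umlaufsatz for simple closed polygons then gives $\Delta=\pm2\pi$, and it equals $2\pi$ once the orientation is chosen so that the polygon is traversed counterclockwise.

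If the holonomy vector is non-zero, then $\tilde\gamma$ is a polygonal arc from $p$ to $p+h$. The whole collar develops equivariantly under the translation $z\mapsto z+h$. The turning at the closing corner is the angle between $v_N$ and $v_1$ at the identified endpoints, so the sum of turning angles is the total change of the argument of the tangent along a closed loop in the quotient cylinder $\mathbb{C}/\langle z\mapsto z+h\rangle$. The curve $\gamma$ is an embedded essential curve in the flat cylinder obtained from the collar, which is isometric to a subset of this quotient. The rotation number of a simple closed essential curve in a flat cylinder is $0$: it is regularly homotopic, through embedded curves in the cylinder, to a straight geodesic core, which has no turning. Hence $\Delta=0$.

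\textbf{Main obstacle.} The delicate step is justifying the tangent-winding claims, namely $\Delta=2\pi$ when $\tilde\gamma$ is simple closed and $\Delta=0$ in the arc case. I would first try to get both from Theorem~\ref{Theorem:Gauss_Bonnet}. In the closed case, apply it to the flat polygonal disk bounded by $\tilde\gamma$, which has $\chi=1$ and no singularities, giving $\Delta=2\pi$. In the arc case, apply it to the compact annulus in the collar bounded by $\gamma$ and a parallel straight closed geodesic. That annulus has $\chi=0$, no cone points, and zero turning on the geodesic boundary, so the turning along $\gamma$ must be $0$. Existence of such a geodesic inside the collar is exactly what the hypothesis of an isometrically embedded Euclidean collar supplies. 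One must also track orientation conventions, interior versus exterior turning, to get the signs right.
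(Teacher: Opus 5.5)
Your closed case is the paper's argument: read the turning angles in the chart and apply Theorem~\ref{Theorem:Gauss_Bonnet} to the flat disk bounded by the Jordan curve $\tilde\gamma$.

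In the arc case you take a genuinely different route. The paper stays in $\mathbb{C}$. It chooses a rectangle one of whose sides passes through $\tilde\gamma(0)$ and $\tilde\gamma(1)$, replaces the segment between them by $\tilde\gamma$, and applies Gauss--Bonnet to the resulting planar disk, obtaining $\tilde\Delta+4\cdot\frac{\pi}{2}=2\pi$. You instead work on the surface. You apply Gauss--Bonnet to the flat annulus cobounded by $\gamma$ and a disjoint closed geodesic of the collar, where $\chi=0$, there are no cone points, and the geodesic side contributes no turning.

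The paper's route needs only the disk case of Gauss--Bonnet and planar pictures. But it must arrange two things: that the closed-up polygon is simple, and that the two junction turns at $\tilde\gamma(0)$ and $\tilde\gamma(1)$ add up exactly to the closing corner of $\gamma$. Both are asserted ``by construction'', and neither visibly uses the collar hypothesis. Your route makes explicit where that hypothesis enters, and it has to enter somewhere. Take a thin flat neighbourhood of an essential curve in $\mathbb{C}/\langle z\mapsto z+h\rangle$ carrying a small curl. It is a flat annulus with holonomy $h\neq 0$ whose core is simple with turning $\pm 2\pi$. Such an annulus contains no closed geodesic, and the arc-case conclusion fails in it.

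Two side remarks.

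First, your claim in Step 2 that a trivial-holonomy annulus develops injectively near its core is false in general. A thin neighbourhood of an immersed figure-eight in $\mathbb{C}$ is a counterexample. You do not need the claim, because simplicity of $\tilde\gamma$ is part of the hypothesis in that case. In fact, if the collar is a genuine flat cylinder, every essential curve in it has the cylinder's nonzero holonomy, so the closed case does not arise at all.

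Second, drop the regular-homotopy argument for rotation number $0$ and use the annulus Gauss--Bonnet argument of your last paragraph instead. That argument is rigorous with the paper's Theorem~\ref{Theorem:Gauss_Bonnet}, and it is insensitive to orientation because the value is $0$. In the closed case, as in the paper, the value $2\pi$ holds only after choosing the orientation.
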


\begin{proof}
    If $\tilde{\gamma}$ is a closed polygonal curve, then $\tilde{\gamma}$ is a Jordan curve as shown Figure \ref{Figure_closed_polygonal_curve}-a. Let $K$ the bounded connected component of $\mathbb{C}-\tilde{\gamma}([0,1])$. We remark that
    \begin{itemize}
        \item[\textbf{(1)}] the boundary of closure $\overline{K}$ is the Jordan curve;
        \item[\textbf{(2)}] the closure $\overline{K}$ is homeomorphic to a closed disk $\mathbb{D}$.
        \item[\textbf{(3)}] In the proof of Lemma~\ref{lemma:collar} the curve $\tilde \gamma$ is constructed using the translation structure, and so turning angle sum of $\gamma$ is equal to the turning angle sum of $\tilde \gamma$.
    \end{itemize}

    The Euler characteristic of a closed disk is $\chi(\mathbb{D})=1$. Using the Gauss-Bonnet Theorem \ref{Theorem:Gauss_Bonnet}
    \[
    \Delta=2\pi\chi(\overline{K})=2\pi\chi(\mathbb{D})=2\pi.
    \]

   \begin{figure}[!ht]
    \begin{tabular}{ccc}
	\begin{tikzpicture}[baseline=(current bounding box.north), scale=1]
	\begin{scope}
	\clip (-2,0) rectangle (5,4);
    \draw [black!30, line width=1pt, ->, >=latex]  (0,2) -- (5,2);
    \draw [black!30, line width=1pt, ->, >=latex]  (1,-1) -- (1,4);
     \draw[thick, blue]
        (0.5,0.8) --
        (1.2,2.3) --
        (2.0,3.2) --
        (3.4,3.0) --
        (4.3,1.8) --
        (3.2,0.6) --
        (1.5,0.3) --
        cycle;
    \node at (1.9,1.1) {$\widetilde{\gamma}(0)=\widetilde{\gamma}(1)$};
    \node at (0,3) {$\mathbb{C}$};
    \draw [line width=1pt, fill=black] (0.5,0.8) circle (0.09);
    
\end{scope}
\end{tikzpicture}
&&
\begin{tikzpicture}[baseline=(current bounding box.north), scale=1]
	\begin{scope}
	\clip (-2,-0.5) rectangle (5,4);
    \draw [black!30, line width=1pt, ->, >=latex]  (-0.8,1) -- (3,1);
    \draw [black!30, line width=1pt, ->, >=latex]  (1,-1) -- (1,4);
     \draw[thick, blue]   
        (0,0) -- 
        (-0.5,0.5) -- 
        (0.5,1.5) -- 
        (-0.5,2.5)--
        (0,3);
        \draw[thick, red]
        (0,3)--
        (0,3.3)--
        (2,3.3)--
        (2,-0.3)--
        (0,-0.3)--
        (0,0);
        \draw [red, dashed, line width=1pt]  (0,-0.3) -- (0,3.3);
        \node at (-0.6,0) {$\widetilde{\gamma}(0)$};
        \node at (3,2.5) {$\mathbb{C}$};
    \draw [line width=1pt, fill=black] (0,0) circle (0.09);
    \node at (-0.6,3) {$\widetilde{\gamma}(1)$};
    \draw [line width=1pt, fill=black] (0,3) circle (0.09);
        \end{scope}
\end{tikzpicture}\\
&&\\
        a. $\tilde{\gamma}$ a Jordan curve. && b. Case $\tilde{\gamma}(0)\neq\tilde{\gamma}(1)$.
    \end{tabular}
    \caption{\emph{Closed polygonal curve.}}
		\label{Figure_closed_polygonal_curve}
    \end{figure}

    In the other case, $\tilde{\gamma}(0)\neq \tilde{\gamma}(1)$. On $\mathbb{C}$ we can draw a rectangle $R$ such that one of its sides (without the corners) contains the ends points $\tilde{\gamma}(0)$ and $\tilde{\gamma}(1)$, and the other three sides do not intersect the image $\tilde{\gamma}([0,1])$. Let $\Gamma$ the polygon obtain by removing the straight line segment with end points $\tilde{\gamma}(0)$ and $\tilde{\gamma}(1)$ of $R$ and pasting the polygonal curve $\tilde{\gamma}([0,1])$. See Figure \ref{Figure_closed_polygonal_curve}-b.

    Let $\tilde \Delta$ be the sum of the turning angles at the corners of $\tilde\gamma$, including the turning angles between $\tilde\gamma(0)$ and the rectangular curve and $\tilde\gamma(1)$ and the rectangular curve. By construction of the rectangular curve, we guarantee that $\tilde\Delta = \Delta$.
    
    Let $K$ be region inside of $\Gamma$, the closure $\overline{K}$ is homeomorphic to a closed disk. Using the Gauss-Bonnet Theorem \ref{Theorem:Gauss_Bonnet} 
\[
    \tilde\Delta+\frac{\pi}{2}+\frac{\pi}{2}+\frac{\pi}{2}+\frac{\pi}{2}=2\pi\chi(\overline{K})=2\pi\chi(\mathbb{D})=2\pi.
    \]
    Thus, $\Delta = \tilde \Delta = 0$.
\end{proof}

\begin{proof}[Proof of \textbf{Theorem A}]
Suppose that there exists a Loch Ness monster $S$ having a tame translation structure with finitely many singularities of finite cone angle. There is a family $\{\gamma_{n}:n\in\mathbb{N}\}$ of disjoint simple closed curves in $S$ decomposing the Loch Ness monster into pair of pants $\{P_{k}:k\in\mathbb{N}\}$, where each $P_{k}$ is a pair of pants. Since the curves in $\{\gamma_n\}$ are pairwise disjoint, using Corollary \ref{prop:polygonal-closed} and Lemma \ref{lemma:pants_decomposition} we can find a polygonal representative $\gamma_n' \in [\gamma_n]$ in each free homotopy class such that the curves in $\{\gamma_n'\}$ are still pairwise disjoint. 

Since the polygonal curves $\{\gamma_n'\}$ are pairwise disjoint, each singular point of finite cone angle can be contained in at most two pairs of pants of the pants decomposition. Since we have finitely many singular points but infinitely many pairs of pants in the decomposition, there must be a pair of pants $P_\ell$ for some $\ell \in \N$ which contains no singular points in its interior or boundary curves.

We apply Lemma~\ref{lemma:collar}, then show that the signed angle sum of the boundary polygonal curves $\gamma_n$ must equal 0.

We recall the Gauss--Bonnet Theorem for a compact translation surface $\tilde{S}$, possible with boundary
\[
	-\sum\limits_{x\in {\rm Sing}(\tilde{S})}\delta(x) + \Delta = 2\pi \chi (\tilde{S}).
	\]

In our case since the pairs of pants $P_\ell$ is everywhere flat, the first sum is identically 0. Since the boundary curves of $P_\ell$ are polygonal (i.e., piecewise geodesic), $\Delta$ is equal to the sum of the signed angles by which the polygonal curves turn at the boundaries, and we know that for each of these curves this angle sum is equal to 0 or $2\pi$. Since $\chi(P_\ell) = -1$, we have that
\[
 0, 2\pi, 4\pi,\text{ or } 6\pi = -2\pi,
\]
a contradiction.
\end{proof}

\section{Proof of \textbf{Theorem B}}\label{sec:proof_thmb}

Let $\mathbf{k}_{\infty}=(k_{n})_{n\in\mathbb{N}_{0}}\in \prod\limits_{n\in\mathbb{N}_{0}}J_{n}$ be a sequence that is not ruled out by \textbf{Theorem A}. The following three existence theorems together imply our main \textbf{Theorem B}. In each case, there exists a tame
translation structure on the Loch Ness monster whose singularity data is the prescribed sequence
$\mathbf{k}_{\infty}$. As we observe in Remark~\ref{remark:LNM_strata_1_0_0_0} below, our constructions can be perturbed to build infinite-dimensional families in each of the strata represented by a vector $\kinf$ as below.

\begin{theorem}\label{theorem_case_1_MTB}
If $k_0\neq 0$ and $k_n=0$ for all $n\geq 1$, then there exists a tame translation structure on the Loch Ness monster whose singularity data is exactly $\kinf$.
\end{theorem}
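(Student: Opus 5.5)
We build the surface explicitly by slit constructions on copies of the plane. The case $k_0\neq 0$, $k_n=0$ for $n\ge1$ splits by whether $k_0$ is a finite $N\ge1$ or $k_0=\aleph_0$. The basic building block is the following. Take countably many copies $\{\mathbb{C}_j\}_{j\in\mathbb{Z}}$ of the complex plane. In each copy, slit along the horizontal ray $r_j=\{t\ge 0\}$ starting at $0$, and glue the top of $r_j$ to the bottom of $r_{j+1}$ as in Figure~\ref{Figure_gluing_marks}-b. The result is the infinite cyclic branched cover of $\mathbb{C}$ over $0$: the metric completion adds exactly one infinite-angle singularity and nothing else. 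This surface is simply connected and planar, so it has no genus. To produce genus without creating finite-angle cone points, we glue along \emph{parallel markings of equal length} placed \emph{in pairs}. In $\mathbb{C}_j$, with $j\ge 1$ say, choose two horizontal segments $m_j,m_j'$ of length $1$, far from the slit and from each other, with both endpoints regular. Glue $m_j$ to $m_j'$ inside the same plane. This is the standard handle operation: it adds one handle, and each endpoint pair becomes a point of angle $4\pi$. That is a problem.

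To avoid the $4\pi$ points, I would instead take segments whose endpoints lie \emph{on the ray $r_j$ itself} or at the infinite-angle point. Concretely, let the marking be a segment in $\mathbb{C}_j$ starting at the cone point $0$. Gluing it to a parallel segment in $\mathbb{C}_{j'}$ that also starts at $0$ adds angle only to the already infinite-angle point. The far endpoint would still become a $4\pi$ point. The cleanest fix is to use \emph{rays} rather than segments: rays starting at the singular point and parallel to each other. Gluing a pair of such rays in distinct sheets (say the rays of direction $\pi$ in $\mathbb{C}_{2j}$ and $\mathbb{C}_{2j+1}$, with $j\ge1$) creates no new cone points, since all endpoints are the infinite-angle point. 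Each such gluing raises the genus by one, because it produces a loop passing through the branch point that is not homotopic to the loops of the original cover. So the key steps are as follows.

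\textbf{(i)} Build the cover $X$ with one infinite-angle point $p$. \textbf{(ii)} Perform countably many ray gluings $\rho_j\sim_{\mathrm{glue}}\rho_j'$ with initial points all equal to $p$. The discreteness requirement is then trivially satisfied, and tameness holds: near $p$ the neighbourhood is still an infinite cyclic cover, because regluing sectors of a $\mathbb{Z}$-cover along rays from the apex yields again a union of sectors cyclically glued. One checks this becomes a single infinite cycle rather than several, by ordering the gluings appropriately. \textbf{(iii)} Check that the resulting surface has infinite genus and one end. For this I would use Lemma~\ref{lemma:spec}: for any compact $K$, enclose it in the union of a finite number of sheets truncated to large discs and pass to the complement. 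Since all gluings are rays going to infinity, the complement is connected through the far parts of the sheets. For genus, exhibit disjoint handles, one for each glued pair.

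For finite $N>1$, take $N$ such blocks and join them by gluing a pair of parallel horizontal rays with regular initial points, one in each block. The initial points then become angle-$4\pi$ points, which is not allowed. Instead, join the blocks by gluing rays emanating from the infinite-angle points of two different blocks. The apex of the first block and the apex of the second block would then be identified, which is also not allowed. So I would join blocks along bi-infinite \emph{lines} that avoid all cone points, taking one line in a sheet of each block and cutting and regluing crosswise. This creates no new cone points and keeps one end. For $k_0=\aleph_0$, do this with countably many blocks placed in a chain, each adding genus, and keep the set of apexes discrete by placing them far apart. The main obstacle is step (ii)/(iii): verifying simultaneously that the ray regluings at the apex leave exactly one infinite-angle point (rather than splitting it into several), that no finite-angle points appear, and that the surface is one-ended. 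I would handle this by writing the local structure at $p$ as a permutation on the set of sectors and choosing gluings so that the permutation is a single $\mathbb{Z}$-cycle.
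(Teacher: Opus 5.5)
\textbf{Step (ii) fails, and it is the only source of genus in your construction.} Use log-polar coordinates $(u,\theta)=(\log r,\arg z)$ on the infinite cyclic cover with the apex removed. Then $X\cong\mathbb{R}_u\times\mathbb{R}_\theta$, and every ray issuing from the apex is an entire line $\mathbb{R}_u\times\{c\}$. Both rays of a pair start at the apex, so the gluing translation matches points at the same distance from it; that is, it preserves $u$. After any family of such gluings the surface is therefore $\mathbb{R}_u\times L$, where $L$ is obtained from $\mathbb{R}_\theta$ by cutting at a discrete set of points and regluing. $L$ is a disjoint union of lines and circles, so every component of the result is a plane or an annulus and the genus stays $0$, however you order the gluings. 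The loop ``through the branch point'' you invoke is not a loop in $S$, since the infinite-angle point is not a point of the surface.

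With your concrete choice things are worse. The rays of direction $\pi$ in $\mathbb{C}_{2j}$ and $\mathbb{C}_{2j+1}$ sit at $\theta=4\pi j+\pi$ and $\theta=4\pi j+3\pi$. The crosswise gluing closes the $2\pi$-sector between them into a separate copy of $\mathbb{C}^{*}$ (a cone of angle $2\pi k$ if the sheets differ by $k$). So the surface disconnects and the apex splits, possibly into a new finite-angle point. Interleaving the pairs can keep $L$ a single line, but then you just get a plane again.

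Your joining of blocks for $N>1$ has the same defect. Your blocks are planar, and a properly embedded line in a planar block separates it into two half-planes. Cutting two blocks along lines and regluing crosswise therefore produces two surfaces, not one.

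\textbf{The missing idea, which the paper uses, is to make handles from polygons having a vertex \emph{at} the infinite-angle point.} In each sheet $L_n$ of the infinite cyclic cover, the paper removes the interior and vertices of a unit square $P_n$ with one vertex at $\pi^{-1}(\mathbf{0})$ and glues opposite sides by translation. All four vertices of a square with opposite sides identified form a single class, so they are absorbed into the one infinite-angle point and its link remains a single line. Each sheet thus gains a handle (cf.\ Construction~\ref{cons:torus_with_boundary}) and no finite-angle point appears; the pairwise disjoint lifts of the punctured torus $\widetilde U$ give infinite genus.

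For the remaining infinite-angle points the paper does not join separate blocks. It takes points $z_\ell$ in the base with horizontal rays $a_\ell$, lifts them to every sheet, and glues the top of the lift in $L_n$ to the bottom of the lift in $L_{n-1}$. Each whole fibre $\pi^{-1}(z_\ell)$ then becomes one new infinite-angle point of the same connected surface, and one-endedness is verified directly on that single surface via the criterion of Lemma~\ref{lemma:spec}.
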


\begin{theorem}\label{theorem_case_2_MTB}
If $k_0=0$ and $\sum\limits_{n=1}^\infty k_n = \infty$, then there exists a tame translation structure on the Loch Ness monster whose singularity data is exactly $\kinf$.
\end{theorem}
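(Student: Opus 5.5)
The plan is to build the surface as a one-sided infinite chain of compact translation surfaces joined by slit gluings. Theorem~A shows where the singularities must come from: a one-ended, infinite-genus surface with $k_0=0$ must carry infinitely many finite-angle cone points. So it is natural to spend a few of them on each compact "handle block" and to let the singularities created by the gluings themselves be part of the prescribed data.

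\emph{Step 1 (pieces).} List the required cone points as a sequence $(n_j)_{j\in\mathbb N}$ in which each $n\geq 1$ occurs exactly $k_n$ times; this is possible since $\sum_n k_n=\infty$ and each $k_n\leq\aleph_0$. Cut this sequence into consecutive finite blocks. To each block I attach a compact translation surface $X_j$ lying in a classical stratum $\mathcal H(\cdot)$, using the fact that every stratum of compact translation surfaces is non-empty. This is the reason for grouping: each $X_j$ must have even total excess $2g_j-2$. On $X_j$ I choose two disjoint slits $s_j^-$ and $s_j^+$ (marked segments, or saddle connections) whose endpoints are either regular points or prescribed cone points of $X_j$.

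\emph{Step 2 (gluing).} For each $j$ I glue $s_j^+\sim_{\mathrm{glue}} s_{j+1}^-$, after rescaling and rotating $X_{j+1}$ so that the two slits are parallel and of equal length. The local effect of a slit gluing is computed by adding angles around each endpoint. If an endpoint has angle $2\pi(a+1)$ in $X_j$ and $2\pi(b+1)$ in $X_{j+1}$, the glued point has angle $2\pi(a+b+2)$, which is again a finite cone point. In particular, two regular endpoints produce a point of angle $4\pi$, and merging two cone points shifts the parity of the total excess by one. This parity shift is what removes the evenness constraint of Step~1: through the junctions one can realize data with an odd number of odd entries, e.g.\ $k_1=1$, $k_2=\infty$. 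The plan is therefore to choose the blocks in Step~1 together with the endpoint types at each junction, so that the final list of cone angles (interior ones plus merged junction ones) is exactly $(n_j)$.

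\emph{Step 3 (verification).} Each point lies in a single compact $X_j$ or on a single junction, so ${\rm Sing}(S)$ is discrete, every junction point is a finite cone point, and no infinite-angle point is created. Hence $k_0=0$. Since each piece is compact, the metric completion adds nothing, so the structure is tame. The genus is infinite because each $X_j$ has genus $\geq 1$ and each slit gluing adds a further handle. For one-endedness I would apply Lemma~\ref{lemma:spec}: any compact $K$ lies in $X_1\cup\cdots\cup X_m$ for some $m$, and its complement is the connected union of the remaining tail pieces, after enlarging $K$ slightly around the last slit. This gives the Loch Ness monster.

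The main obstacle is the combinatorial bookkeeping in Steps~1--2: matching an arbitrary sequence $(n_j)$ exactly, without creating any unwanted cone points at the junctions. This matters especially when some $k_n$ are finite, for instance $k_1=0$, because then the default junction producing two $4\pi$ points is forbidden. My first attempt would be to make every junction merge a designated cone point of $X_j$ with one of $X_{j+1}$ at both slit endpoints. This requires parallel saddle connections of equal length joining prescribed cone points, and the angles of the merged points are then fed back into the choice of the strata for $X_j$. I expect that securing the existence of such saddle connections in each chosen stratum will need a careful explicit polygonal model rather than a general existence result.
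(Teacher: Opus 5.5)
Your chain of compact pieces is a genuinely different route from the paper's, and your one-end argument is fine. But the proposal has a gap exactly where the theorem's content lies. You never specify pieces, strata and junction endpoint types that realize an arbitrary admissible $\kinf$; you flag this yourself as ``the main obstacle.''

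\textbf{The parity obstruction.} This is not routine bookkeeping. Each $X_j$ has even total order $2g_j-2$. A junction replaces endpoint orders $a,b$ and $a',b'$ by $a+b+1$ and $a'+b'+1$, so it raises the total by exactly $2$. Hence, for every $m$, the closed surface built from $X_1,\dots,X_m$ (with $s_m^+$ left unglued) has an even number of odd-order zeros. Now suppose $\kinf$ has only finitely many odd-order entries, and an even number of them; this already happens for $k_2=\infty$ with $k_n=0$ otherwise.
\begin{itemize}
\item For all large $m$, the two endpoint orders of $s_m^+$ must then have equal parity.
\item Since the merged points at late junctions must have even order, either $s_m^+$ or $s_{m+1}^-$ must join two zeros of odd order.
\item That slit must also be disjoint from the other slit of its piece.
\end{itemize}
Nonemptiness of strata says nothing about such saddle connections. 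You would need explicit models or a breaking-up-zeros argument. For instance, the regular $2(4r+1)$-gon gives $\mathcal H(2r-1,2r-1)$ with every side a saddle connection between the two zeros, and merging its endpoints with regular points gives points of order $2r$. You would also need a scheme that places the finitely many exceptional points.

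\textbf{How the paper avoids this.} The paper never uses compact pieces. It works inside the single plane $L_0=\C$, removing regular $2n$-gons ($n$ even) or $2(2n+1)$-gons ($n$ odd). By Lemma~\ref{lem:2n-gon}, each removal contributes one or two cone points of angle exactly $2\pi(n+1)$. Different surgeries do not constrain one another, because there is no compact total to satisfy Gauss--Bonnet. The one leftover point, when $n$ is odd and $k_n$ is odd, comes from an extra sheet glued along an infinite ray. The resulting $4\pi$ point is then enlarged by a regular $2(n-1)$-gon with a vertex there, via Lemma~\ref{lem:2n-gon-on-sing}.

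\textbf{Step~3 also needs repair.}
\begin{itemize}
\item \emph{Tameness.} Compactness of the pieces does not give tameness. With the rescalings of Step~2, the slits and the distances between $s_j^-$ and $s_j^+$ can shrink so fast that the chain has finite diameter. A sequence running through $X_j,X_{j+1},\dots$ is then Cauchy, and $\widehat S$ gains a point accumulated by junction cone points. That point is neither regular nor a cone point, so the structure is not tame. You need a uniform positive lower bound on $d(s_j^-,s_j^+)$ in the rescaled $X_j$.
\item \emph{Genus.} Gluing two different surfaces along one slit each is a connected sum and adds no handle. Infinite genus still follows, but from $g(X_j)\ge 1$ alone.
\item \emph{Parity remark.} Each junction performs two merges, so it shifts the total order by $2$, not $1$. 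Data such as $k_1=1$, $k_2=\infty$ are realizable, but because the infinite chain has no finite total order, not because of a parity shift.
\end{itemize}
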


\begin{theorem}\label{theorem_case_3_MTB}
If $k_0 \neq 0$ and $k_n \neq 0$ for at least one $n \geq 1$, then there exists a tame translation structure on the Loch Ness monster whose singularity data is exactly $\kinf$.
\end{theorem}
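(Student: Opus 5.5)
We need to prove Theorem \ref{theorem_case_3_MTB}: $k_0\ge 1$, some $k_n\ge1$ with $n\ge1$. The sum $\sum_{n\ge1}k_n$ may be finite or infinite, and each $k_n$ may equal $\aleph_0$. We will build the surface as a countable chain of blocks glued along slits, controlling three things separately: the genus, the number of ends, and the singularities created.

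\emph{Base blocks.} We start from the structure produced by Theorem \ref{theorem_case_1_MTB}, a Loch Ness monster $L_0$ whose singularity data has exactly $k_0$ infinite-angle points and no finite-angle points. If that construction is not flexible enough, we use the standard model instead: infinitely many copies of $\mathbb{C}$, each with a horizontal ray slit from a point $p$, glued cyclically through slits so that $p$ becomes a single infinite-angle point. A copy of the plane with a slit gives no finite cone point, because the endpoints of an infinite slit chain all go to one infinite-angle point. For the finite-angle points we use an elementary gadget. Glue $n+1$ copies of $\mathbb{C}$ cyclically along one horizontal segment, with the same finite slit in each copy. The two endpoints of the slit then become two cone points of angle $2\pi(n+1)$, and the result is a planar surface with $n+1$ ends. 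To get an odd number of points, or a single point of angle $2\pi(n+1)$, we instead glue a finite marking into a surface that already has genus, or we slit from a regular point to an existing infinite-angle point. In that second case only the free endpoint becomes a new cone point, and the excess at the infinite-angle end is absorbed.

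\emph{Assembly.} Enumerate the required finite-angle singularities as $x_1,x_2,\dots$, a finite or countable list in which angle $2\pi(n+1)$ appears exactly $k_n$ times. We take $L_0$ as the spine, since it already has infinite genus and one end. For each $j$ we choose a region $R_j\subset L_0$ that lies far out toward the end, with the regions pairwise far apart and their points discrete. Inside $R_j$ we glue a gadget creating exactly $x_j$. The most robust choice is to take $n_j$ auxiliary copies of $\mathbb{C}$ and a segment slit whose one endpoint lies on an infinite-angle point of $L_0$, gluing cyclically. Each copy of $\mathbb{C}$ would add a planar end, so we close it off: in every copy we additionally glue a second finite slit joining it to $L_0$ (or a torus-type handle chain). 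This makes each former end of that copy a ray going into $L_0$'s region, which adds genus but no new ends. We must also choose the slits so that the second slit's endpoints are regular. For that, we pair the second slit's endpoints with identical angle contributions, for instance by a parallel slit pair in which each endpoint receives total angle $2\pi$ from a $2$-cycle of planes, so no new cone point appears. This angle bookkeeping is the main obstacle: every gluing of finite markings automatically creates cone points of angle $4\pi$ at its endpoints. To control it, we arrange that all unwanted endpoints lie on infinite-angle singularities of $L_0$, which are allowed because $k_0\ge1$, or that they are the designated $x_j$. This works when the infinite-angle points are already present, so at least one slit endpoint of every gadget can be sent to one of the $k_0$ fixed points in $L_0$. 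Because $L_0$ is an infinite surface, the infinite-angle point has infinitely many sheets available. This keeps its count $k_0$ unchanged and discrete.

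\emph{Verification.} We check four things. First, tameness: the gluing points form a discrete set because the $R_j$ escape to the end. Second, the singularity count: each gadget contributes exactly one new finite point $x_j$, and the infinite-angle set is unchanged. Third, the genus is infinite, since it already was for $L_0$. Fourth, the surface has one end. For this we apply Lemma \ref{lemma:spec}: any compact $K$ meets finitely many gadgets, each gadget is attached to $L_0$ by compact slits and has no ends of its own, so $S\setminus K'$ stays connected for a suitable $K'$. By the classification of non-compact surfaces, $S$ is then the Loch Ness monster with singularity data $\kinf$.
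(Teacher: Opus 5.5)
Your assembly breaks at the one-end verification. Each gadget attaches $n_j$ auxiliary copies of $\mathbb{C}$ to the spine only along compact segments: the cyclic slit from $a_j$ to an infinite-angle point, and the ``closing-off'' second finite slit. Gluing along compact markings adds handles but never merges ends. Let $D$ be a closed disk in an auxiliary copy containing all of its slits. For every compact $K''\supset D$, the part of that copy outside $K''$ and the part of the spine outside $K''$ are separated by $\partial D\subset K''$. So $S\setminus K''$ is disconnected, and the criterion of Lemma~\ref{lemma:spec} fails. Every gadget therefore leaves $n_j$ planar ends, infinitely many in total if $\sum_{n\ge1}k_n=\infty$, and your claim that a gadget ``is attached by compact slits and has no ends of its own'' is false.

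The angle device for the second slit is also wrong. An endpoint of a finite slit glued cyclically among $m\ge2$ surfaces has angle at least $2\pi m$; for a $2$-cycle of planes it is exactly $4\pi$, not $2\pi$. Such endpoints can only be hidden at infinite-angle points, never made regular.

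The missing idea is that an auxiliary sheet's end is absorbed only by gluing along a non-compact slit, i.e.\ a ray, as in Lemma~\ref{lemma:one_ends_glue_case_I}. Your gadget can be repaired this way. In one sheet of the surface from Theorem~\ref{theorem_case_1_MTB}, choose pairwise disjoint parallel rays avoiding its squares, slits and cone points, whose regular initial points $a_j$ form a discrete set. Glue $n_j$ planes cyclically along the ray from $a_j$. Then $a_j$ is the only new cone point, with angle $2\pi(n_j+1)$, there are no other slit endpoints, and the surface stays one-ended with infinite genus.

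The paper proceeds differently. On the sheet $L_0$ of that surface it runs the operations from the proof of Theorem~\ref{theorem_case_2_MTB}: it removes regular polygons and identifies opposite sides (Lemma~\ref{lem:2n-gon}). This is a compact modification of a single plane and creates the cone points without new sheets. Only when $k_n$ is odd with $n$ odd does it glue a new plane along an infinite slit and then remove a polygon at the slit's initial point (Lemma~\ref{lem:2n-gon-on-sing}).
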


For each of these cases, we construct a tame translation structure on the Loch Ness monster whose singularity data is precisely $\mathbf{k}_{\infty}$. We begin with several auxiliary constructions and lemmas that will be used throughout the proof.

\subsection{Auxiliary constructions and lemmas}

\begin{construction}\label{cons:torus_with_boundary}
\begin{upshape}
Let $U\subset \mathbb{C}$ be a connected open subset, and let $P\subset U$ be a square such that exactly one of its vertices belongs to the boundary $\partial U$ of $U$, as shown Figure \ref{fig:open-minus-square}.
\begin{figure}[!ht]
	\centering
    \begin{tabular}{ccc}
	\begin{tikzpicture}[baseline=(current bounding box.north)] 
	\begin{scope}[scale=0.6]
	\clip (-5,-2) rectangle (5,5);
	%%%%%%%%%%%%%%%%%%%%%%%%%%%%%%%%%%%%%%%%%%%%%%
	\draw[fill=green!20, dashed, line width=1.0pt] (-3.5,-0.5) to[out=15, in=180] (-1,-1.5) to[out=0, in=210] (2.5,-0.5) to[out=0, in=-90] (2,4) to[out=90, in=0] (-2,4) to[out=180, in=90] (-3.8,2) to[out=-90, in=180] cycle;
	\draw [white, fill=gray!20, line width=1.0pt] (-3,-0.5) --(-1,-0.5) -- (-1,1.5)-- (-3,1.5)-- cycle;
    \draw [red, line width=1.0pt] (-3,-0.5) -- (-1,-0.5);
    \node[red] at (-2,-0.5){$||$};
    \draw [red, line width=1.0pt] (-3,1.5) -- (-1,1.5);
    \node[red] at (-2,1.5){$||$};
    \draw [blue, line width=1.0pt] (-1,-0.5) -- (-1,1.5);
    \node[blue] at (-1,0.5){$--$};
    \draw [blue, line width=1.0pt] (-3,-0.5) -- (-3,1.5);
    \node[blue] at (-3,0.5){$--$};
    \draw [line width=1pt, fill=black] (-3,-0.5) circle (0.1);
    \draw [line width=1pt, fill=black] (-1,-0.5) circle (0.1);
    \draw [line width=1pt, fill=black] (-1,1.5) circle (0.1);
    \draw [line width=1pt, fill=black] (-3,1.5) circle (0.1);
	\node at (1,3){$U$};
    \node at (0,1){$P$};
	\end{scope}
	\end{tikzpicture}
    &&		\begin{tikzpicture}[baseline=(current bounding box.north)]
    \begin{scope}[scale=0.6]
	\clip (-3,-3) rectangle (3,3);
						
	%%%%%%%%%%%%%%%%%%%%%%%%%%%%
	%%%%%%%%%
    \draw [white, fill=green!20, line width=1.0pt] (-2.5,-2.5) --(2.5,-2.5) -- (2.5,2.5)-- (-2.5,2.5)-- cycle;
	\draw [red, line width=1pt] (-2.5,-2.5) -- (2.5,-2.5);
    \node[red] at (0,-2.5){$||$};
    \node[red] at (0,2.5){$||$};
	\draw [red, line width=1pt] (-2.5,2.5) -- (2.5,2.5); \draw [blue, line width=1pt] (2.5,-2.5) -- (2.5,2.5);
    \draw [blue, line width=1pt] (-2.5,-2.5) -- (-2.5,2.5);
    \node[blue] at (-2.5,0){$--$};
    \node[blue] at (2.5,0){$--$};
    \draw[fill=gray!20, dashed, line width=1pt] (-2.5,-2.5) to[out=10, in=10] (0,0) to[out=90, in=90]  cycle;
    \draw [line width=1pt, fill=black] (-2.5,2.5) circle (0.1);
    \draw [line width=1pt, fill=black] (2.5,-2.5) circle (0.1);
    \draw [line width=1pt, fill=black] (2.5,2.5) circle (0.1);
    \draw [line width=1pt, fill=black] (-2.5,-2.5) circle (0.1);
	\end{scope}
		\end{tikzpicture}
			\end{tabular}
	\caption{\emph{Torus with a single puncture.}}
	\label{fig:open-minus-square}
\end{figure}
Remove the interior of $P$ and its vertices from $U$. Identifying each pair of opposite sides of $P$ by translation yields a translation surface homeomorphic to a torus with a single puncture.
\end{upshape}
\end{construction}

 \begin{lemma}\label{lemma:one_ends_glue_case_I}
Let $\gamma_{1}$ and $\gamma_{2}$ be parallel rays on translation surfaces $S_{1}$ and $S_{2}$, respectively. If both $S_{1}$ and $S_{2}$ have exactly one end, then the glued surface
\[
S:=(S_{1}\sqcup S_{2})/\gamma_{1}\sim_{\mathrm{glue}}\gamma_{2}
\]
also has exactly one end. 
%If the initial points $p_1$ and $p_2$ of the parallel rays have cone angles $2\pi\ell_1$ and $2\pi\ell_2$, respectively, for $\ell_1, \ell_2\in \N$, then after identification, the cone point $[\ell_1] = [\ell_2]$ has cone angle $2\pi(\ell_1+\ell_2)$.
\end{lemma}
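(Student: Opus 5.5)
We will verify the criterion of Lemma~\ref{lemma:spec}: for every compact $K\subset S$ we produce a compact $K'\supset K$ with $S\setminus K'$ connected. Write $\pi_i:\widetilde S_i\to S$ for the natural maps from the slit surfaces, where $\widetilde S_i$ is $S_i$ cut along $\gamma_i$. The first observation is that cutting along a ray does not create new ends. Since $\gamma_i$ is a geodesic ray escaping every compact set, a neighbourhood of the slit in $\widetilde S_i$ near infinity is a half-strip of the two sides of $\gamma_i$. These two sides are joined around the far part of the surface, so $\widetilde S_i$ is still connected outside large compact sets. In $S$ the two sides of $\gamma_1$ are glued to the opposite sides of $\gamma_2$, so the image $\pi_1(\gamma_1)=\pi_2(\gamma_2)$ is a single ray $\gamma$ along which $S_1\setminus\gamma_1$ and $S_2\setminus\gamma_2$ meet.

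Given a compact $K\subset S$, set $K_i=\pi_i^{-1}(K)$, which is compact in $\widetilde S_i$. Its image in $S_i$ is also compact, and we still call it $K_i$. By Lemma~\ref{lemma:spec} applied to $S_i$, choose compact $K_i'\supset K_i\cup\gamma_i([0,1])$ with $S_i\setminus K_i'$ connected. We enlarge $K_i'$ by a closed $\epsilon$-neighbourhood of an initial segment $\gamma_i([0,T])$, with $T$ chosen so large that $\gamma_i([T,\infty))$ lies in $S_i\setminus K_i'$. Thickening the compact set in this way keeps the complement connected: we can first take $K_i'$ to be a subsurface containing the relevant initial segment and then reapply the lemma. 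Define $K'=\pi_1(K_1')\cup\pi_2(K_2')$, which is compact.

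It remains to show that $S\setminus K'$ is connected. Each piece $\pi_i\big((S_i\setminus K_i')\setminus\gamma_i\big)$ lies in $S\setminus K'$. The set $(S_i\setminus K_i')\setminus\gamma_i$ may split into at most two components, namely the two sides of the tail $\gamma_i([T,\infty))$. Any two components are reconnected in $S_i\setminus K_i'$ by a path crossing the tail, and we may assume each crossing is transverse. In $S$, crossing $\gamma$ from one side of $\gamma_1$ lands on the opposite side of $\gamma_2$, and it still lands outside $K'$. Both sides of the tail of $\gamma_1$ therefore attach to $S_2$'s part, and symmetrically. Hence the union of these pieces together with the tail of $\gamma$ is connected. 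Every point of $S\setminus K'$ lies in one of these pieces or on the tail of $\gamma$, so $S\setminus K'$ is connected, and Lemma~\ref{lemma:spec} gives that $S$ has exactly one end.

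The main obstacle is this last connectivity argument. The essential claim is that removing the tail of the ray from the connected set $S_i\setminus K_i'$ leaves pieces that each touch the tail. Our strategy is to argue locally near a point of the tail, where a flat half-disc on each side is available, together with the fact that each component of $(S_i\setminus K_i')\setminus\gamma_i$ has the tail in its closure, because $S_i\setminus K_i'$ is connected. A minor additional point is that $\widetilde S_i$ and the gluing are well defined when the ray's initial point is a cone point; only the topology matters here, so this causes no difficulty.
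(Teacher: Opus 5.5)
Your outline is the same as the paper's. You apply Lemma~\ref{lemma:spec}, choose compact $K_i'\subset S_i$ with connected complement containing an initial segment of $\gamma_i$, take $K'$ to be the image of $K_1'\cup K_2'$, and argue that $S\setminus K'$ is connected. The gap is in that last step.

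\textbf{The gap.} Write $C_i=S_i\setminus K_i'$ and $R_i=\gamma_i((T,\infty))\subset C_i$ for the tail, with a common $T$ for both surfaces. The slit does not become a single ray in $S$. Its two boundary copies become two rays $\gamma^{a},\gamma^{b}$ from the glued initial point:
\begin{itemize}
\item the upper side of $\gamma_1$ meets the lower side of $\gamma_2$ along $\gamma^{a}$;
\item the lower side of $\gamma_1$ meets the upper side of $\gamma_2$ along $\gamma^{b}$.
\end{itemize}
Suppose both $C_1\setminus R_1=W_1^+\sqcup W_1^-$ and $C_2\setminus R_2=W_2^+\sqcup W_2^-$ split into their two sides. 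Then $S\setminus K'$ is the disjoint union of the open sets $W_1^+\cup W_2^-\cup\gamma^{a}((T,\infty))$ and $W_1^-\cup W_2^+\cup\gamma^{b}((T,\infty))$, so it is disconnected.

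Nothing in your argument excludes this. In this scenario every piece still has the tail in its closure. A path in $C_1$ from $W_1^+$ to $W_1^-$, rerouted through the gluing, lands in $W_2^-$. Getting back to $W_1^-$ then needs a path from $W_2^-$ to $W_2^+$ inside $C_2\setminus R_2$, which is the same question for $S_2$. So the claim you isolate as essential is not the one needed. You must show that the tail does not separate $C_i$, for at least one $i$ (in fact for both). Your first paragraph asserts this (``the two sides are joined around the far part'') without proof. The intuition is also off: the far end of the tail escapes to infinity, so the sides can only be joined by going around its near end.

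\textbf{How to close it.} A parity argument supplies the non-separation.
\begin{enumerate}
\item Take a compact bordered subsurface $N\subset S_i$ with $K_i'\subset\mathrm{int}\,N$, and with $\partial N$ polygonal, avoiding cone points and transverse to $\gamma_i$.
\item The tail starts at $\gamma_i(T)\in K_i'\subset \mathrm{int}\,N$. By properness of the ray it eventually leaves $N$ forever, so it crosses $\partial N$ a finite, odd number of times.
\item Hence some component $c$ of $\partial N$, which is a closed curve lying in $C_i$, meets $R_i$ an odd number of times.
\item If the two sides of $R_i$ lay in different components of $C_i\setminus R_i$, each transverse crossing would switch components. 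Every closed curve in $C_i$ would then meet $R_i$ an even number of times, a contradiction.
\end{enumerate}
So $C_i\setminus R_i$ is connected, and the two glued rays join $C_1\setminus R_1$ to $C_2\setminus R_2$.

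\textbf{A smaller repair.} Thickening $K_i'$ by a neighbourhood of $\gamma_i([0,T])$ can disconnect the complement. Reapplying Lemma~\ref{lemma:spec} can make the new set meet the tail again. Instead, set $M_i=K_i\cup\gamma_i([0,T])$ and let $K_i'$ be $M_i$ together with all components of $S_i\setminus M_i$ that have compact closure. One-endedness gives a unique component with noncompact closure, so $K_i'$ is compact with connected complement. The tail is connected, disjoint from $M_i$, and not relatively compact, so it lies in that component.

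For comparison, the paper's proof follows the same route. It concludes directly from the connectedness of $C_1$ and $C_2$, treating the gluing as an identification rather than a cut-and-reglue. The non-separation step above is therefore needed to complete either argument.
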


\begin{proof}
Let $K$ be a compact subset of $S$. We must guarantee the existence of a compact subset $K'\subset S$ such that $K\subset K'$ and $S-K'$ is connected. The following surfaces with boundary are necessary to the proof.

Let $\pi:S_{1} \sqcup S_{2}\to S$ be the quotient map associated with the gluing construction. We note that the inverse image $\pi^{-1}(K)$ is a compact subset of $S_{1}\sqcup S_{2}$. This implies that $\pi(K)\cap S_{j}$ is a compact subset of $S_{j}$, for each $j\in\{1,2\}$. Since $S_{j}$ has only one end, there exists a compact subset $K'_{j}\subset S_{j}$ such that $\pi(K)\cap S_{j}\subset K'_{j}$ and $S_{j}-K'_{j}$ is connected. Enlarging $K_{j}'$ if necessary, we may assume that $K_{j}'$ contains the straight line segment $\gamma_{j}\left([0,\varepsilon)\right)$ of the infinite ray $\gamma_{j}\left([0,\infty]\right)$, for some positive real number $\varepsilon>0$. We remark that the infinite ray $\gamma_{j}((\varepsilon,\infty))$ belongs to the connected $S_{j}-K_{j}'$.

By construction, we hold the following facts:
\begin{itemize}
\item[\textbf{$\bullet$}] The complement $(S_{1}+S_{2})-(K_{1}'+K_{2}')$ is composed by the connected componnets $C_{1}:=S_{1}-K_{1}$ and $C_{2}:=S_{2}-K_{2}$.  

\item[\textbf{$\bullet$}] The union $K_{1}'\sqcup K_{2}'$ is a compact subset of $S_{1}\sqcup S_{2}$, which satisfies $\pi^{-1}(K)\subset K_{1}'\sqcup K_{2}'$. 

\item[\textbf{$\bullet$}] Since the projection map $\pi$ is continuous, the direct image $K:=\pi(K_{1}'+K_{2}')$ is a compact subset of $S$, such that $K\subset K'$.

\item[\textbf{$\bullet$}] The complement $S-K'$ is equal to quotient space obtained by restricting the gluing operation on the union of the connected components $S_{1}-K_{1}$ and $S_{2}-K_{2}$. Thus, it can be written in the form
\[
S-K'=\left(C_{1}+C_{2}\right)/\gamma_{1}\sim_{\rm glue}\gamma_{2}.
\]
\end{itemize}

Since the gluing operation restriction adequately identifies the rays $\gamma_{1}((\varepsilon,\infty))$ and $\gamma_{2}((\varepsilon, \infty))$, it does not create new connected components in the quotient space $S-K'$. Indeed, the gluing operation turns the disjoint union of the connected components into a connected quotient space. This proves that $S-K'$ is connected.
\end{proof}

\begin{lemma}
\label{lem:2n-gon}
Let $U\subset S$ be a connected open subset homeomorphic to a disk which does not contain any cone point singularities. Let $P\subset U$ be a regular $2n$-gon contained in $U$, and remove the interior of $P$. Identifying each pair of opposite sides of $P$ by translation yields a translation surface homeomorphic to a surface of genus $\lfloor n/2\rfloor$ with a single puncture. 
\begin{enumerate}[(a)]
    \item For $n$ even (that is, if $P$ is a $4m$-gon), then the resulting translation surface has one cone point of angle $(2n+2)\pi$.
    \item If $n$ is odd, then the resulting translation surface has two cone points of angle $(n+1)\pi$.
\end{enumerate} 
\end{lemma}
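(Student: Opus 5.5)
The plan is to analyze the surface $\Sigma:=(U\setminus \operatorname{int}P)/\!\sim$ in three steps: first the combinatorics of the side identifications, then the topology via an Euler characteristic count, and finally the cone angles via the exterior angles of $P$ at its vertices. Throughout I regard the vertices of $P$ as remaining in the quotient, where they become the cone points.

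\textbf{Step 1: the gluing is by translations and has a clean vertex combinatorics.} Label the sides of the regular $2n$-gon cyclically $s_0,\dots,s_{2n-1}$, with $s_i$ running from vertex $v_i$ to $v_{i+1}$ (indices mod $2n$). The opposite sides $s_i$ and $s_{i+n}$ are parallel and of equal length. With the boundary orientation of $P$ they point in opposite directions. So the translation carrying $s_i$ onto $s_{i+n}$ glues the region of $U\setminus\operatorname{int}P$ lying along one side to the region lying along the other, and no two glued strips overlap. Away from the vertices, the quotient therefore inherits a translation structure, just as in Construction~\ref{cons:torus_with_boundary}.

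This translation sends $v_i$ to the endpoint of $s_{i+n}$, namely $v_{i+n+1}$. The vertex classes are therefore the orbits of $i\mapsto i+n+1$ on $\mathbb{Z}/2n$, and their number is $\gcd(n+1,2n)$. Since $\gcd(n+1,2n)=\gcd(n+1,2)$, there is one class when $n$ is even and two classes when $n$ is odd; in the latter case the classes are the even-indexed and the odd-indexed vertices.

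\textbf{Step 2: topology.} Let $\Sigma_P$ be the closed surface obtained by gluing the polygon $P$ itself with the same identifications. It has $V=\gcd(n+1,2n)$ vertices, $E=n$ edges and $F=1$ face. Hence $2-2g=V-n+1$, which gives $g=n/2$ for $n$ even and $g=(n-1)/2$ for $n$ odd, i.e.\ $g=\lfloor n/2\rfloor$.

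Since $U$ is an open disk containing $P$, the set $U\setminus\operatorname{int}P$ is a half-open annulus whose inner boundary is $\partial P$. Gluing that inner boundary by the same word is the same as taking $\Sigma_P$, removing the open face, and attaching a half-open collar $\partial P\times[0,1)$ along the glued boundary. The result is $\Sigma_P$ minus a point, i.e.\ a genus $\lfloor n/2\rfloor$ surface with a single puncture; the puncture corresponds to the outer end of $U$.

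\textbf{Step 3: cone angles.} This is the step I expect to need the most care. We must use the angle on the \emph{exterior} side of $P$, since that is the side retained in $U$. The interior angle of a regular $2n$-gon is $(n-1)\pi/n$, so at each vertex the retained region has angle $2\pi-(n-1)\pi/n=(n+1)\pi/n$.

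A small neighborhood of a vertex class in $\Sigma$ is obtained by cyclically gluing the exterior sectors at the vertices of that class. The gluing follows the orbit of Step~1, and consecutive sectors are matched along the glued sides by translations. This yields a cyclic branched cover of a disk, so the point is a finite cone-angle singularity in the sense of the definition of tameness, with total angle equal to the sum of the sector angles.
\begin{enumerate}[(a)]
    \item If $n$ is even, all $2n$ vertices are in one class, so the angle is $2n\cdot(n+1)\pi/n=(2n+2)\pi$.
    \item If $n$ is odd, each of the two classes has $n$ vertices, so each angle is $n\cdot(n+1)\pi/n=(n+1)\pi=2\pi\bigl(\tfrac{n-1}{2}+1\bigr)$. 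This is a legitimate cone angle because $(n-1)/2$ is an integer.
\end{enumerate}

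As a consistency check, both cases give total excess $2\pi\cdot n$. Counting in multiples of $2\pi$, this is $2g$ (not $2g-2$). That matches the Gauss--Bonnet theorem (Theorem~\ref{Theorem:Gauss_Bonnet}) once we account for a puncture whose neighborhood is the outside of a large Jordan curve in $\mathbb{C}$, which contributes turning $-2\pi$. So the count is consistent with the puncture being a planar-type end.
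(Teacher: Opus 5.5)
Your proposal is correct and follows the paper's route: the cone angles come from distributing the total exterior angle $2n\cdot\frac{(n+1)\pi}{n}=(2n+2)\pi$ over the vertex classes, while your orbit count $\gcd(n+1,2n)=\gcd(n+1,2)$ and your collar/Euler-characteristic argument make rigorous the vertex-class count and the ``inversion-type'' genus claim that the paper only asserts. One small slip is in your closing sanity check: for odd $n$ the total excess is $2\bigl((n+1)\pi-2\pi\bigr)=2\pi(n-1)$, not $2\pi n$, and it is $2\pi(n-1)=2\pi\cdot 2g$ that matches your stated conclusion.
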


\begin{proof}
    Consider the regular $2n$-gon $P$, which has interior angle sum $(2n-2)\pi$. Let the {\em outer angle} at each vertex be the conjugate angle, that is, $2\pi - (\text{inner angle)}$. The sum of the outer angles, therefore, is 
    \[(2n)(2\pi) - (2n-2)\pi = (2n+2)\pi.\]
    This sum of the outer angles is sum of the cone angles of the equivalence classes of the vertices of $P$ after removing the interior of $P$ and identifying opposite edges.

    \begin{enumerate}[(a)]
    \item If $n$ is even and $P$ is a $4m$-gon, then there is only one equivalence class of vertices after gluing. Thus that vertex has cone angle equal to the total sum of the outer angles, $(2n+2)\pi$.
    \item If $n$ is odd, then there are two equivalence class of vertices after gluing. Each equivalence class contains $n$ vertices, and so each cone point has angle equal to half of the total sum of the outer angles, that is, $(n+1)\pi$.
\end{enumerate} 

The fact that the resulting surface is homeomorphic to a surface of genus $\lfloor n/2 \rfloor$ follows from the same inversion-type argument as shown in Figure~\ref{fig:open-minus-square}.
\end{proof}

\begin{lemma}
\label{lem:2n-gon-on-sing}
Let $q$ be a cone singularity of finite cone angle $2\pi k$, and let $U$ be a connected open set contained in the domain of a chart from $S \to \C$ which contains $q$ and no other singularities. Then define $P$ to be a regular $2n$-gon within $U$ so that one of its vertices is equal to $q$. By removing the interior of $P$ and identifying pairs of opposite sides, we increase the genus of the translation surface by $\lfloor n/2\rfloor$. 
\begin{enumerate}[(a)]
    \item If $n$ is even, the cone angle of $q$ increases by $2n\pi$ radians, and no other cone singularities are introduced.
    \item If $n$ is odd, then the cone angle of $q$ increases by $(n-1)\pi$, and a new singularity of cone angle $(n+1)\pi$ is introduced.
\end{enumerate} 
\end{lemma}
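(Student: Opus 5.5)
The plan is to repeat the angle count from the proof of Lemma~\ref{lem:2n-gon}. The only change is that one vertex of $P$ now sits at the cone point $q$, so the full angle around that vertex is $2\pi k$ instead of $2\pi$.

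First, I will make the setup precise. A neighborhood of $q$ is not isometric to an open subset of $\C$, so I read the hypothesis as follows: $P$ is an embedded regular $2n$-gon in $U$ with $q$ as a vertex, and $P\setminus\{q\}$ lies in a simply connected, singularity-free region, which we develop isometrically into $\C$. Concretely, $P$ occupies a wedge of angle equal to its interior angle $\frac{(n-1)\pi}{n}$ in one sheet at $q$. Then the interior of $P$ can be removed and opposite sides identified by translation, exactly as in Construction~\ref{cons:torus_with_boundary} and Lemma~\ref{lem:2n-gon}. Away from the vertices the result is again a translation surface, since the gluings are translations of parallel sides of equal length. Points of $S$ outside $\overline P$ are unaffected, so no other cone points change.

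Next comes the angle bookkeeping, which I expect to be straightforward. Give each vertex $v$ of $P$ its available angle $\Theta(v)$, meaning the total angle around $v$ in $S$ minus the interior angle of $P$ at $v$. Then $\Theta(q)=2\pi k-\frac{(n-1)\pi}{n}$, and every other vertex has $\Theta(v)=2\pi-\frac{(n-1)\pi}{n}$. The cone angle of each vertex class after gluing is the sum of $\Theta$ over that class. I will determine the classes as in Lemma~\ref{lem:2n-gon}.

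\textbf{Case $n$ even.} All $2n$ vertices form one class. Its angle is
\[
2\pi k+(2n-1)2\pi-(2n-2)\pi = 2\pi k+2n\pi .
\]
So the cone angle of $q$ increases by $2n\pi$, and no other singularity appears.

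\textbf{Case $n$ odd.} The vertices alternate between two classes of $n$ vertices each, and $q$ lies in one of them. The class of $q$ gets
\[
2\pi k+(n-1)2\pi-n\cdot\tfrac{(n-1)\pi}{n}=2\pi k+(n-1)\pi ,
\]
and the other class gets $n\cdot 2\pi-(n-1)\pi=(n+1)\pi$. This gives a new cone point of angle $(n+1)\pi$, which is nontrivial since $n+1>2$ for odd $n\ge 3$.

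Finally, I will get the genus increase from an Euler characteristic count rather than the informal ``inversion'' picture. Removing the open polygon and gluing turns the closed disk $\overline P$ into a $2n$-gon with its sides identified in opposite pairs. This gives $n$ edges and either $1$ vertex ($n$ even) or $2$ vertices ($n$ odd), with no face. Excising a disk and replacing it with this $1$-complex therefore changes $\chi$ by $-2\lfloor n/2\rfloor$, and the genus rises by $\lfloor n/2\rfloor$. As a consistency check, I will compare with Gauss--Bonnet, Theorem~\ref{Theorem:Gauss_Bonnet}, on a compact subsurface containing $\overline P$: the excess angle added is $2n\pi$, resp. $(n-1)\pi+(n-1)\pi=2(n-1)\pi$, which is $4\pi\lfloor n/2\rfloor$ in both cases, consistent with the Euler characteristic dropping by $2\lfloor n/2\rfloor$.

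The main obstacle is the first step. I need to justify that such an embedded $P$ with vertex $q$ exists and that the identifications are translations even though $q$ is singular. Once the polygon is placed within one sheet at $q$, the rest is arithmetic.
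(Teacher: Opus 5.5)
Your proposal is correct and follows essentially the same route as the paper: the paper models a neighborhood of $q$ by $M_k$, places $P$ in the last lower half-plane, and adds the $(2k-1)\pi$ swept out by the other half-planes to the outer angles of the vertex class, with the outer angle at $q$ reduced by $\pi$; this is exactly your available-angle sum of $2\pi k-\frac{(n-1)\pi}{n}$ at $q$ plus $2\pi-\frac{(n-1)\pi}{n}$ at each other vertex of the class. Your Euler characteristic count, together with the Gauss--Bonnet check, is a real addition, because the paper's proof never verifies the genus increase of $\lfloor n/2\rfloor$ and relies only on the informal argument of Lemma~\ref{lem:2n-gon}.
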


\begin{proof}
Let $p:M_k\to\mathbb{C}$ be the degree-$k$ cyclic covering of the complex plane $\mathbb{C}$ branched over the origin $\textbf{0}$. Then the singular point $q$ in $S$ has an open neighborhood isometric to an open neighborhood $U$ of $p\inv(\bf{0})$ in $M_k$, as shown in Figure~\ref{fig:remove_polygon}. Let $P$ be a regular $2n$-gon contained in the intersection of $U$ with the final lower halfplane on the rightmost side of Figure~\ref{fig:remove_polygon}, where exactly one vertex of $P$ intersects $p\inv(\bf{0})$. %Enumerate the edges of $P$ counterclockwise starting at $p\inv(\bf{0})$, starting with $s_1$ and ending with $s_{2n}$. Let $a$ be the measure of the angle between the horizontal segment $A_{2k-1}$ and the side $s_1$ of $P$, and let $c$ be the measure of the angle between the final side $s_{2n}$ of $P$ and the horizontal segment $A_{2k}$. Let $b$ be the internal angle of P at the vertex $p\inv(\bf{0})$. Then we have $a + b + c = \pi$.

\begin{figure}[!ht]
    \begin{center}
	\begin{tikzpicture}[scale=1.5]

    \draw[dashed,line width=.8pt] (0,0) arc (0:180:1cm);
    \draw (-2,0) -- (0,0);
    \fill (-1,0) circle (0.06);
    \node at (-1.7, -.2) {$A_1$};
    \node at (-.3, -.2) {$A_{2k}$};
    \node at (-1, -.8) {$p\inv(\textbf{0})$};
    \draw[->] (-1,-.6) -- (-1,-.1);
    \node at (7.5, .8) {$p\inv(\textbf{0})$};
    \draw[->] (7.5,.6) -- (7.5,.1);
    \fill[red] (-.7,0) circle (0.06);
    \draw[line width=.8, red,->] (-.7,0) arc (0:180:.3cm);

    \draw[dashed,line width=.8pt] (.5,0) arc (180:360:1cm);
    \draw (.5,0) -- (2.5,0);
    \fill (1.5,0) circle (0.06);
    \node at (.8, .2) {$A_1$};
    \node at (2.2, .2) {$A_{2}$};
    \draw[line width=.8, red,->] (1.2,0) arc (180:360:.3cm);

    \draw[dashed, line width=.8pt] (5,0) arc (0:180:1cm);
    \draw (3,0) -- (5,0);
    \fill (4,0) circle (0.06);
    \node at (3.3, -.2) {$A_3$};
    \node at (4.7, -.2) {$A_2$};
    \draw[line width=.8, red,->] (4.3,0) arc (0:110:.3cm);
    \node[red, rotate = 50] at (3.8, .15) {$\cdots$};

    \node at (5.75, 0) {$\cdots$};

    \draw[dashed,line width=.8pt] (6.5,0) arc (180:360:1cm);
    \draw (6.5,0) -- (8.5,0);
    %\draw[line width=.8, red] (7.25,0) arc (180:225:.25cm);
    \node[draw, line width=.8pt, regular polygon, regular polygon sides=4, minimum size=1cm, fill=gray!20, rotate = 45] at (7.5, -.35) {};
    %\draw[line width=.8, blue] (7.75,0) arc (0:-135:.25cm);
    %\draw[line width=.8, orange] (7.75,0) arc (0:-45:.25cm);
    \fill (7.5,0) circle (0.06);
    \fill (7.5,-.7) circle (0.06);
    \fill (7.15,-.35) circle (0.06);
    \fill (7.85,-.35) circle (0.06);
    \node at (6.8, .2) {$A_{2k-1}$};
    \node at (8.2, .2) {$A_{2k}$};
    %\node[red] at (7.1, -.15) {$a$};
    \node at (7.5, -.35) {$P$};
    %\node[orange] at (7.9, -.15) {$c$};
    \fill[red] (7.8,0) circle (0.06);
    
	\end{tikzpicture}
	\end{center}
	\caption{\em Removing a regular polygon at a cone singularity.}
		\label{fig:remove_polygon}
\end{figure}

To compute the cone angle at $p\inv(\textbf{0})$, let us begin in the first upper halfplane on the segment labeled $A_{2k}$. Going counterclockwise, we traverse $2k-1$ halfplanes accumulating $(2k-1)\pi$ radians before we reach the final lower halfplane along the segment $A_{2k-1}$. In this final lower halfplane, the angle accumulated is related to the sum of the outer angles as described in the proof of Lemma~\ref{lem:2n-gon}, but in the context of the halfplane, the outer angle at the vertex $p\inv(\textbf{0})$ is reduced by $\pi$. Thus we conclude the following:
\begin{enumerate}[(a)]
    \item If $n$ is even, then the angle accumulated in the final lower halfplane is $(2n+1)\pi$, and the total cone angle at $p\inv(\textbf{0})$ is $(2k+2n)\pi$. Since all of the vertices of $P$ are identified after gluing, no additional singularities are introduced.
    \item If $n$ is odd, then the angle accumulated in the equivalence class of $p\inv(\textbf{0})$ in the lower halfplane is $n\pi$, and the total cone angle at $p\inv(\textbf{0})$ is $(2k + n -1)\pi$. A new singularity of cone angle $(n+1)\pi$ is also introduced.
\end{enumerate}

\end{proof}

\subsection{Proof of Theorem \ref{theorem_case_1_MTB}}\label{proof_theorem_Case_1_MTB} ($k_0 \neq 0$ and $\sum\limits_{n=1}^\infty k_n =0 $)

We start by constructing a tame translation structure on a surface $S$. Then we shall prove that $S$ is the Loch Ness monster. 

Let $\pi:\widetilde{S}\to\mathbb{C}$ be the infinite cyclic covering of the complex plane $\mathbb{C}$ branched over the origin $\textbf{0}$. The space $\widetilde{S}$ can be obtained as follows. For each $n\in\mathbb{Z}$, let $\mathbb{C}_{n}$ a copy of the complex plane $\mathbb{C}$. Then $\widetilde S$ is obtained by taking the union $\bigcup\limits_{n\in\mathbb{Z}}\mathbb{C}_{n}$, slitting the positive real ray (including the origin) of each $\C_n$, and gluing the top boundary of the slit in $\C_n$ with the bottom boundary of the slit in $\C_{n-1}$ for all $n \in \Z$. See Figure \ref{Figure_infinite_covering}. For each $n\in\mathbb{Z}$, the \emph{sheet} $L_{n}$ of $\widetilde S$ is the subsurface with boundary obtained by taking the copy $\mathbb{C}_n$ and cutting along the positive real axis. We then have the following facts:
    \begin{itemize}
    \item[\textbf{(1)}] Since the complex plane $\mathbb{C}$ is a translation surface with the global chart ${\rm Id}$, the open subset $\mathbb{C}^{\ast}=\mathbb{C}-\{\textbf{0}\}$ inherits this translation structure. 
    
    \item[\textbf{(2)}] By using this covering function $\pi$, we lift the translation structure of $\mathbb{C}^{\ast}$ on $\widetilde{S}-\{\pi^{-1}(\textbf{0})\}$. 
    
    \item[\textbf{(3)}] $\widetilde{S}-\{\pi^{-1}(\textbf{0})\}$ is a tame translation surface with only one singularity of infinite cone angle. 
    
    \item[\textbf{(4)}] The surface  $\widetilde{S}-\{\pi^{-1}(\textbf{0})\}$ has genus zero and only one end. 
    \end{itemize}
\vspace{2mm}

Let $N = k_0$. Define the complex numbers
    $z_{1}=\textbf{0}$ and $z_{\ell}=-2-i(\ell+1)$ for $2\leq \ell\leq N$.
    Now, on $\mathbb{C}$ we draw the unit square $P$ generated by the complex numbers $-1$ and $i$. Moreover, for each $2\leq \ell\leq N$, we take the ray $a_{\ell}$ in the left-halfplane having initial point $z_{\ell}$  parallel to the negative real axis as shown Figure \ref{Figure_infinite_covering}. 
    \begin{figure}[!ht]
		\begin{center}
			\begin{tikzpicture}[baseline=(current bounding box.north), scale=0.8]
				\begin{scope}
					\clip (-9.1,-8.1) rectangle (9.1,17);
	%%%%%%%%%%%%%%%%%%%%%%%%%%%%%%%%%%%%%%%%%%%%%%%
    %%%%%%%%%Complex-plane
	\draw [line width=1pt]  (-6,-8) -- (6,-8);
	\draw [line width=1pt]  (-6,-3) -- (6,-3);
    \draw [line width=1pt]  (-6,-3) -- (-6,-8);
    \draw [line width=1pt]  (6,-3) -- (6,-8);
    \draw [black!50, line width=1pt,  ->, >=latex, dashed] (0,-5) -- (-6,-5);
    \draw [black!50, line width=1pt,  <->, >=latex, dashed] (0,-3) -- (0,-8);                 
    \draw [fill=gray!20, line width=1.0pt] (0,-5) --(0,-4) -- (-1,-4)-- (-1,-5)-- cycle;
    \draw [line width=1pt,  ->, >=latex,] (0,-5) -- (6,-5);
    
    \draw [line width=1pt, red, fill=red] (-3,-6) circle (0.12);
    \draw [red, line width=1pt,  ->, >=latex,] (-3,-6) -- (-6,-6);
    \draw [line width=1pt, blue, fill=blue] (-3,-7) circle (0.12);
    \draw [blue, line width=1pt,  ->, >=latex,] (-3,-7) -- (-6,-7);
    \node at (-2.5,-6) {$z_{2}$};
    \node at (-2.5,-7) {$z_{\ell}$};
    \node at (-4,-6.3) {$\vdots$};
    \node at (-4,-7.3) {$\vdots$};
    \draw [line width=1pt, fill=black] (0,-5) circle (0.12);
    \draw [line width=1pt, fill=black] (0,-4) circle (0.12);
    \draw [line width=1pt, fill=black] (-1,-4) circle (0.12);
    \draw [line width=1pt, fill=black] (-1,-5) circle (0.12);
    \node at (0.7,-5.5) {$\textbf{0}=z_{1}$};
    \node at (-1.2,-5.5) {$-1$};
    \node at (-1.5,-3.5) {$-1+i$};
    \node at (0.3,-3.5) {$i$};
    \node at (-5,-4) {$\mathbb{C}$};
    \node at (-2,-4.5) {$P$};
    \node at (3,-4.6) {$a_{1}$};
    \node[red] at (-5.3,-5.6) {$a_{2}$};
    %\node[red] at (-4.5,-5.3) {$\gamma_{ a^{2}}$};
    \node[blue] at (-5.3,-6.6) {$a_{\ell}$};
     %\node[blue] at (-4.5,-7.4) {$\gamma_{ a^{\ell}}$};
    
    %%%%%%%%%%%%%%%%%%%%%%%%%%%map
    \node at (0,-1.5) {$\vdots$};
    \node at (-0.5,-2.4) {$\downarrow$};
    \node at (-1,-2.4) {$\pi$};
    
    %%%%%%%%%%%%%%%%%%%%%%%%%%%555
    %%%%%%%%%%%%%%%%%%%%%%%%%%%%% Sheet L_{-1}
    \draw [line width=1pt]  (-6,-1) -- (6,-1);
	\draw [line width=1pt]  (-6,4) -- (6,4);
    \draw [line width=1pt]  (-6,4) -- (-6,1.1);
    \draw [line width=1pt]  (-6,0.9) -- (-6,0.1);
    \draw [line width=1pt]  (-6,-0.1) -- (-6,-1);
	\draw [line width=1pt]  (6,4) -- (6,2.1);
    \draw [line width=1pt]  (6,1.9) -- (6,-1);
    %%%%%%%%%%Square
    \draw [fill=gray!20, line width=1.0pt] (0,2) --(0,3) -- (-1,3)-- (-1,2)-- cycle;
    \draw [blue!30, line width=1.0pt] (0,2) -- (0,3);
    \node[blue!30] at (0,2.5) {$--$};
    \draw [blue!30, line width=1.0pt] (-1,2) -- (-1,3);
    \node[blue!30] at (-1,2.5) {$--$};
    \draw [red!30, line width=1.0pt] (-1,2) -- (0,2);
    \node[red!30] at (-0.5,2) {$|$};
    \draw [red!30, line width=1.0pt] (-1,3) -- (0,3);
    \node[red!30] at (-0.5,3) {$|$};
    \draw [->, >=latex, color=black, line width=1pt] plot[smooth] coordinates {(0,2)(3,2.2)(6,2.1)};
	\draw [->, >=latex, color=black, line width=1pt] plot[smooth] coordinates {(0,2)(3,1.8)(6,1.9)};
    \node at (3,2.6) {$a_{1}^{-1}$};
    \node at (3,1.4) {$a_{1}^{0}$};
                    
    \draw [line width=1pt, fill=black] (0,2) circle (0.12);
    \draw [line width=1pt, fill=black] (0,3) circle (0.12);
    \draw [line width=1pt, fill=black] (-1,3) circle (0.12);
    \draw [line width=1pt, fill=black] (-1,2) circle (0.12);
    \draw [line width=1pt, red, fill=red] (-2,1) circle (0.12);
    \draw [line width=1pt, red, fill=red] (-2,1) circle (0.12);
    \draw [line width=1pt, blue, fill=blue] (-2,0) circle (0.12);
    \draw [->, >=latex, color=red, line width=1pt] plot[smooth] coordinates {(-2,1)(-3.5,1.2)(-6,1.1)};
    \draw [->, >=latex, color=red, line width=1pt] plot[smooth] coordinates {(-2,1)(-3.5,0.8)(-6,0.9)};
    \draw [->, >=latex, color=blue, line width=1pt] plot[smooth] coordinates {(-2,0)(-3.5,0.2)(-6,0.1)};
    \draw [->, >=latex, color=blue, line width=1pt] plot[smooth] coordinates {(-2,0)(-3.5,-0.2)(-6,-0.1)};

    \node at (-4,0.6) {$\vdots$};
    \node at (-4,-0.4) {$\vdots$};
    \node at (0,1.4) {$\pi^{-1}(\textbf{0})$};
    \node at (-5,3) {$L_{-1}$};
    \node at (-2,2.5) {$P_{-1}$};
                  
%%%%%%%%%%%%%%%%%%%%%%%%%%%%%%%%%%%%%%%
%%%%%%%%%%%%%%%Sheet L_{-1}
    \draw [line width=1pt]  (-6,5) -- (6,5);
	\draw [line width=1pt]  (-6,10) -- (6,10);
    \draw [line width=1pt]  (-6,10) -- (-6,7.1);
    \draw [line width=1pt]  (-6,6.9) -- (-6,6.1);
    \draw [line width=1pt]  (-6,5.9) -- (-6,5);
	\draw [line width=1pt]  (6,10) -- (6,8.1);
    \draw [line width=1pt]  (6,7.9) -- (6,5);

    \draw [fill=gray!20, line width=1.0pt] (0,8) --(0,9) -- (-1,9)-- (-1,8)-- cycle;
    \draw [blue!60, line width=1.0pt] (0,8) -- (0,9);
    \node[blue!60] at (0,8.5) {$--$};
    \draw [blue!60, line width=1.0pt] (-1,8) -- (-1,9);
    \node[blue!60] at (-1,8.5) {$--$};
    \draw [red!60, line width=1.0pt] (-1,8) -- (0,8);
    \node[red!60] at (-0.5,8) {$|$};
    \draw [red!60, line width=1.0pt] (-1,9) -- (0,9);
    \node[red!60] at (-0.5,9) {$|$};
    \draw [->, >=latex, color=black, line width=1pt] plot[smooth] coordinates {(0,8)(3,8.2)(6,8.1)};
	\draw [->, >=latex, color=black, line width=1pt] plot[smooth] coordinates {(0,8)(3,7.8)(6,7.9)};
    \node at (3,8.6) {$a_1^0$};
    \node at (3,7.4) {$a_{1}^1$};
                    
    \draw [line width=1pt, fill=black] (0,8) circle (0.12);
    \draw [line width=1pt, fill=black] (0,9) circle (0.12);
    \draw [line width=1pt, fill=black] (-1,9) circle (0.12);
    \draw [line width=1pt, fill=black] (-1,8) circle (0.12);
    \draw [line width=1pt, red, fill=red] (-2,7) circle (0.12);
    \draw [line width=1pt, red, fill=red] (-2,7) circle (0.12);
    \draw [line width=1pt, blue, fill=blue] (-2,6) circle (0.12);
    \draw [->, >=latex, color=red, line width=1pt] plot[smooth] coordinates {(-2,7)(-3.5,7.2)(-6,7.1)};
    \draw [->, >=latex, color=red, line width=1pt] plot[smooth] coordinates {(-2,7)(-3.5,6.8)(-6,6.9)};
    \draw [->, >=latex, color=blue, line width=1pt] plot[smooth] coordinates {(-2,6)(-3.5,6.2)(-6,6.1)};
    \draw [->, >=latex, color=blue, line width=1pt] plot[smooth] coordinates {(-2,6)(-3.5,5.8)(-6,5.9)};
    \node[red] at (-5.3,1.5) {$a_{2}^{-1}$};
    \node[blue] at (-5.3,0.5) {$a_{\ell}^{-1}$};
    \node[red] at (-3,0.5) {$a_{2}^{0}$};
    \node[blue] at (-3,-0.5) {$a_{\ell}^{0}$};

    \node at (-4,6.6) {$\vdots$};
    \node at (-4,5.6) {$\vdots$};
    \node at (0,7.4) {$\pi^{-1}(\textbf{0})$};
    \node at (-5,9) {$L_{0}$};
    \node at (-2,8.5) {$P_{0}$};

%%%%%%%%%%%%%%%%%%%%%%%%%%%%%%%%%%%%%%%%%%%%%%
%%%%%%%%%%%%%%%%%%Sheet L_{0}
    \node at (0,16.7) {$\vdots$};
    \draw [line width=1pt]  (-6,11) -- (6,11);
	\draw [line width=1pt]  (-6,16) -- (6,16);
    \draw [line width=1pt]  (-6,16) -- (-6,13.1);
    \draw [line width=1pt]  (-6,12.9) -- (-6,12.1);
    \draw [line width=1pt]  (-6,11.9) -- (-6,11);
	\draw [line width=1pt]  (6,16) -- (6,14.1);
    \draw [line width=1pt]  (6,13.9) -- (6,11);

    \draw [fill=gray!20, line width=1.0pt] (0,14) --(0,15) -- (-1,15)-- (-1,14)-- cycle;
    \draw [blue!90, line width=1.0pt] (0,14) -- (0,15);
    \node[blue!90] at (0,14.5) {$--$};
    \draw [blue!90, line width=1.0pt] (-1,14) -- (-1,15);
    \node[blue!90] at (-1,14.5) {$--$};
    \draw [red!90, line width=1.0pt] (-1,14) -- (0,14);
    \node[red!90] at (-0.5,14) {$|$};
    \draw [red!90, line width=1.0pt] (-1,15) -- (0,15);
    \node[red!90] at (-0.5,15) {$|$};
    \draw [->, >=latex, color=black, line width=1pt] plot[smooth] coordinates {(0,14)(3,14.2)(6,14.1)};
	\draw [->, >=latex, color=black, line width=1pt] plot[smooth] coordinates {(0,14)(3,13.8)(6,13.9)};
    \node at (3,14.6) {$a_{1}^1$};
    \node at (3,13.4) {$a_{1}^2$};
    \node[red] at (-5.3,7.5) {$a_{2}^{0}$};
    \node[blue] at (-5.3,6.5) {$a_{\ell}^{0}$};
    \node[red] at (-3,6.5) {$a_{2}^{1}$};
    \node[blue] at (-3,5.5) {$a_{\ell}^{1}$};
                    
    \draw [line width=1pt, fill=black] (0,14) circle (0.12);
    \draw [line width=1pt, fill=black] (0,15) circle (0.12);
    \draw [line width=1pt, fill=black] (-1,15) circle (0.12);
    \draw [line width=1pt, fill=black] (-1,14) circle (0.12);
    \draw [line width=1pt, red, fill=red] (-2,13) circle (0.12);
    \draw [line width=1pt, red, fill=red] (-2,13) circle (0.12);
    \draw [line width=1pt, blue, fill=blue] (-2,12) circle (0.12);
    \draw [->, >=latex, color=red, line width=1pt] plot[smooth] coordinates {(-2,13)(-3.5,13.2)(-6,13.1)};
    \draw [->, >=latex, color=red, line width=1pt] plot[smooth] coordinates {(-2,13)(-3.5,12.8)(-6,12.9)};
    \draw [->, >=latex, color=blue, line width=1pt] plot[smooth] coordinates {(-2,12)(-3.5,12.2)(-6,12.1)};
    \draw [->, >=latex, color=blue, line width=1pt] plot[smooth] coordinates {(-2,12)(-3.5,11.8)(-6,11.9)};
    \node[red] at (-5.3,13.5) {$a_{2}^{1}$};
    \node[blue] at (-5.3,12.5) {$a_{\ell}^{1}$};
    \node[red] at (-3,12.5) {$a_{2}^{2}$};
    \node[blue] at (-3,11.5) {$a_{\ell}^{2}$};

    \node at (-4,12.6) {$\vdots$};
    \node at (-4,11.6) {$\vdots$};
    \node at (0,13.4) {$\pi^{-1}(\textbf{0})$};
    \node at (-5,15) {$L_{1}$};
    \node at (-2,14.5) {$P_{1}$};

	\end{scope}
			\end{tikzpicture}
		\end{center}
		\caption{\emph{Constructing a Loch Ness monster with infinite-angle cone points.}}
		\label{Figure_infinite_covering}
	\end{figure}
    
We consider the following subsets of $\widetilde{S}$:

\begin{itemize}
\item[\textbf{$\bullet$}] The inverse image $\pi^{-1}(P)=\bigcup\limits_{n\in\mathbb{Z}}P_{n}\subset \widetilde{S}$ is the union of infinitely many isometric copies of $P$ with common point the vertex $\pi^{-1}(\textbf{0})$.  

\item[\textbf{$\bullet$}] For each $\ell$ with $2\leq \ell\leq N$, the inverse image $\pi^{-1}(a_{\ell})$ consists infinitely many disjoint isometric copies of $a_{\ell}$, which are parallel. See Figure \ref{Figure_infinite_covering}.
\end{itemize} 

\vspace{2mm}
 From $\widetilde{S}-\{\pi^{-1}(\textbf{0})\}$ remove the interior and the vertices of each square $P_{n}$, and then glue the opposite sides of each $P_n$ by translation. 
 Moreover, for each $\ell$, $2\leq \ell\leq N$, remove the points in $\pi\inv(z_\ell)$, cut the surface along each ray in $\pi^{-1}(a_{\ell})$, and then glue the top boundary of the ray in sheet $L_n$ with the bottom boundary of the ray in sheet $L_{n-1}$ by translation. Denote by $S$ the resulting translation surface shown in Figure \ref{Figure_infinite_covering}. The metric completion $\widehat{S}$ is the union of $S$ with the singularities of infinite cone angle:

\begin{itemize}
\item[\textbf{$\bullet$}] All points of the fibers
\[
\pi^{-1}(\textbf{0}),\; \pi^{-1}(-1),\;\pi^{-1}(-1+i), \text{ and } \pi^{-1}(i)
\]
are identified in such a way that they define a cone point of infinite cone angle, denoted by $[\pi^{-1}(\textbf{0})]$. 

\item[\textbf{$\bullet$}] For each $2\leq \ell \leq N$, the points of the fiber $\pi^{-1}(z_{\ell})$ are identified in such a way that they define a cone point of infinite cone angle, denoted by $[\pi^{-1}(z_\ell)]$.
\end{itemize}
This ensures that the tame translation surface $S$ has exactly $N$ cone points of infinite cone angle. 
To complete the proof, we show that the surface $S$ is topologically equivalent to the Loch Ness monster.

\emph{First we show the surface $S$ is one-ended}. Let $K\subset S$ be compact. We will show that there exists a compact subset $K'\subset S$ containing $K$ such that $S\setminus K'$ is connected.

Let $T$ be punctured surface with boundary obtained from $\widetilde{S}-\{\pi^{-1}(\textbf{0})\}$ by removing the vertices and the interior of each square $P_{n}$ for $n\in\mathbb{Z}$, by slitting the surface along the infinitely many rays in the inverse image $\pi^{-1}(a_\ell)$ for each $\ell \in \{2,\ldots, N\}$, and by removing the initial points $\bigcup_{\ell=2}^N \pi\inv(z_\ell)$ of the rays.

Note that there is a natural ``slit-regluing'' map $p:T\to \widetilde S-\{\pi^{-1}(\textbf{0})\}$, which is the identity at points that are not on the boundaries corresponding to the rays $\pi^{-1}(a_\ell)$ and which identifies by translation the top and bottom boundaries of the lift of $a_\ell$ in sheet $L_n$, for each $2\leq \ell\leq N$ and $n\in \mathbb{Z}$. While the map $p$ is not a surjective projection onto $\widetilde S-\{\pi^{-1}(\textbf{0})\}$, the image $p(T)$ is isometric to the subsurface of $\widetilde{S}-\{\pi^{-1}(\textbf{0})\}$ obtained by removing the points $\bigcup\limits_{\ell=2}^{N}\{\pi^{-1}(z_\ell)\}$ and the interior and vertices of each square $P_{n}$ for $n\in\mathbb{Z}$.

Note that the surface $S$ as described above is a quotient of $T$. Denote by $q:T\to S$ this projection. We note that $q$ is a proper map, and so the inverse image $q^{-1}(K)$ is a compact subset of $T$. 

Thus we have that $p\left(q^{-1}(K)\right)$ is a compact subset of $\widetilde S-\{\pi^{-1}(\textbf{0})\}$. Since $\widetilde S-\{\pi^{-1}(\textbf{0})\}$ has only one end, there exists a compact subset $K_{0}\subset \widetilde S-\{\pi^{-1}(\textbf{0})\}$ such that $p\left(q^{-1}(K)\right)\subset K_{0}$ and $C_{0}:=\left(\widetilde S-\{\pi^{-1}(\textbf{0})\}\right)-K_{0}$ is connected. 

We observe that the compact set $K_0$ may contain points in the lifts $\pi\inv(z_\ell)$ as well as vertices or points in the interiors of the squares $P_n$. These points are not in the image $p(T) \subset \widehat S$. As we would like to pass $K_0$ through $q \circ p\inv$, we will take a closed subset of $K_0$ containing $p\left(q^{-1}(K)\right)$ and avoiding these points.

Given that the set of inverse images $\{\pi^{-1}(z_\ell): 2\leq \ell\leq N\}$ is discrete and $K_{0}$ is compact, only finitely many of these points belong to $K_{0}$. By taking a closed subset of $K_0$ if necessary, we can assume that none of these points are in the boundary $\partial K_{0}$. Consequently, only finitely many rays in $\pi^{-1}(a_\ell)$ intersect $K_{0}$ for each $2\leq \ell\leq N$. Denote all these rays by $a'_{1},\dots, a'_{m}$, and their respective initial points by $z'_{1},\ldots,z'_{m}$. 
A similar argument tells us that $K_{0}$ intersects the vertices or interior of only finitely many of the squares $P_n$. Denote these squares by $P'_{1},\ldots, P'_{t}$.
Let $\varepsilon > 0$ be such that
\begin{itemize}
    \item the open balls $B_{\varepsilon}(z'_{1}),\ldots, B_{\varepsilon}(z'_{m})$ are pairwise disjoint and contained in $K_{0}-p\left(q^{-1}(K)\right)$,
    
    \item for each vertex $v$ of $P'_{i}$ except $\pi^{-1}(\textbf{0})$ for $1\leq i\leq t$, the open ball $B_{\varepsilon}(v)$ is contained in $K_{0}-p\left(q^{-1}(K)\right)$, and 

    \item the open ball $B_{\varepsilon}(\pi^{-1}(\textbf{0}))$ does not intersect the compact $p\left(q^{-1}(K)\right)$ and is contained in $\left(\widetilde S- \{\pi^{-1}(\textbf{0})\}\right)-K_{0}$.
\end{itemize}

We now consider the perforated compact set
\[
K_{0}':=K_{0}-\left( \bigcup\limits_{i=1}^{m}B_{\varepsilon}(z'_{i})\cup \bigcup\limits_{1 \leq j \leq t} \text{int}(P_j') \cup \bigcup_{\substack{v \text{ vertex of } P'_{j}\\ 1\leq j \leq t}}B_{\varepsilon}(v)\right)\subset K_{0}.
\]
We have the following facts:
\begin{itemize}
\item[\textbf{$\bullet$}] The complement $(\widetilde S-\{\pi^{-1}(\textbf{0})\})-K_{0}'$ is composed of $m+1$ connected components, 

\[C_{0}=\left(\widetilde S-\{\pi^{-1}(\textbf{0})\}\right)-K_{0}, \text{ and } C_{1}=B_{\varepsilon}(z'_{1}), \ldots, C_{m}=B_{\varepsilon}(z'_{m}).\] 

\item[$\bullet$] Given that $K_{0}'$ intersects a finite numbers of rays, $a_{1}',\ldots,a_{m}'$, then the inverse image $p^{-1}(K_{0}')$ is a compact subset of $T$. This compact $p^{-1}(K_{0})$ is the set one obtains from $K_{0}'$ by cutting along the curve $K_{0}'\cap a_{i}'$, for $1\leq i\leq m$.

\item[$\bullet$] By construction, the image $q(p^{-1}(K_{0}))=K'$ is a compact subset of $S$ such that $K\subset K'$. 

\end{itemize}

We claim that $S - K'$ is connected and note that the complement $S-K'$ is equal to the quotient space obtained by restricting the slit-and-glue operation to the union of the sets $C_0, \ldots, C_{m}$, after removing the initial points $\pi^{-1}(z_\ell)$ and the vertices of the squares $P_{n}$, $n\in \Z$. We denote by $\tilde{C}_{i}$ the connected quotient space obtained from $C_{i}$ by applying the slit-and-glue operation, where $0\leq i\leq m$.

We note that the punctured open ball $U_{0}=B_{\varepsilon}([\pi^{-1}(\textbf{0})])-\{[\pi^{-1}(\textbf{0})]\}$ of $S$ is connected and belongs to some connected component of $S-K'$. By construction, this punctured open ball intersects the connected set $\tilde{C}_{0}$, and so the union $U_{0}\cup \tilde{C}_{0}$ is connected. 
%Moreover, the quotient spaces  $\tilde{C}_{m+1},\ldots,\tilde{C}_{m+3t}$ are contained in $U_{0}$. This implies that the union $U_{0}\cup\tilde{C}_{0}\cup\bigcup\limits_{i=m+a}^{m+3t}\tilde{C}_{i}$  is connected.

For each $1\leq j\leq m$, the connected punctured ball $U_{j}=B_{\varepsilon}([\pi^{-1}(z'_{j})])-\{[\pi^{-1}(z'_{j})]\}$ is contained in some connected component of $S-K'$. By construction, $U_{j}$ also intersects $\tilde{C}_{0}$ and contains the set $\tilde{C}_{j}$. Thus, the union $\tilde{C}_{0}\cup \bigcup\limits_{j=1}^{m}U_{j}\cup\bigcup\limits_{j=1}^{m}\tilde{C}_{i} = S-K'$ is connected. 

%Given that the connected $U_{0}\cup\tilde{C}_{0}\cup\bigcup\limits_{i=m+a}^ {m+3t}\tilde{C}_{i}$ and $\tilde{C}_{0}\cup \bigcup\limits_{j=1}^{m}U_{j}\cup\bigcup\limits_{j=1}^{m}\tilde{C}_{i}$ have in common the set $\tilde{C}_{0}$, then the union $U_{0}\cup\tilde{C}_{0}\cup\bigcup\limits_{i=m+a}^ {m+3t}\tilde{C}_{i}\cup \tilde{C}_{0}\cup \bigcup\limits_{j=1}^{m}U_{j}\cup\bigcup\limits_{j=1}^{m}\tilde{C}_{i}=S-K'$ is connected.

\emph{Finally, we prove that $S$ has infinite genus.}  The  tame translation surface $S_{1}$ described below is necessary to prove that $S$ has infinite genus.

Let $S_{1}$ be the tame translation surface obtained from the complex plane $\mathbb{C}$ by removing the interior of the unit square $P$, and gluing its parallel sides by translations. We then have the following fact

\begin{itemize}
\item[\textbf{(1)}]The surface $S_{1}$ has genus one and only one end. 

\item[\textbf{(2)}] By construction, the map $\pi:S\to \mathbb{C}$ induces an infinite cyclic covering $\Pi:\widehat{S}\to S_{1}$ branched over the points $z_{1},z_{2},\ldots,$ and $z_{N}$.
That is, for each $\ell$, with $1\leq \ell\leq N$ the inverse image $\Pi^{-1}(z_{\ell})$ is a singularity of infinite cone angle. 
\end{itemize}

Let $U$ be a connected open subset of the complex plane $\mathbb{C}$ containing the unit square $P$ such that just its corner $z_{1}=\textbf{0}$ is on the boundary $\partial U$ of $U$. See Figure \ref{fig:open-minus-square}. We can assume that the closure $\overline{U}$ of $U$ does not contain points in $\{z_{2},z_{3},\ldots, z_{N}\}$. Let $\widetilde{U}$ be the connected open subset of $S_{1}$, which comes from $U$ by removing the interior of the unit square $P$ (including its vertices,) and gluing pairs of parallel sides by a translation as shown Figure \ref{fig:open-minus-square}. The following holds:
\begin{itemize}
    \item[\textbf{$\bullet$}] The connected open subset $\widetilde{U}\subset S_{1}$ is homeomorphic to a torus with a hole. See Construction \ref{cons:torus_with_boundary}

    \item[\textbf{$\bullet$}] On the boundary $\partial \widetilde{U}$ of $\widetilde{U}$ contains a unique infinite-angle singular point. Moreover, $\widetilde{U}$ does not contain singular points in its interior.
\end{itemize}

We remark that the inverse image $\Pi^{-1}(\widetilde{U})$ is the union of infinitely many copies of $\widetilde{U}$, which are pairwise disjoint. This ensures that $S$ has infinite genus.

The preceding construction establishes the existence of a tame translation structure on the Loch Ness monster with singularity data $\mathbf{k}_{\infty}=(k_{n})_{n\in\mathbb{N}_{0}}$ satisfying $k_{0}\neq 0$ and $k_{n}=0$ for every $n\geq 1$. 
\qed

\begin{remark}\label{remark:LNM_strata_1_0_0_0}
The surface $S$ described in the proof above can be modified slightly to produce a large family of tame translation structures on the Loch Ness monster surface. Each of these structures has a unique singularity of infinite angle, and no two of them are isomorphic. 

The construction described above admits a natural generalization. Instead of removing copies of the unit square from the infinite cyclic branched covering $\widetilde{S}$ of $\mathbb{C}$ branched over $\mathbf{0}$, one may remove from each sheet $L_{n}$ of $\widetilde{S}$ the interior of a suitable polygon $P_{n}$ (not necessarily the same for every $n\in\mathbb{Z}$), chosen so that one of its vertices coincides with the unique singular point of $L_n$. The parallel sides of $P_{n}$ are then identified in pairs by translations.

By suitable we mean that after performing these side identifications, the polygon $P_n$ yields either a flat torus $\mathbb{T}_{\tau}$ or a compact translation surface $S_{g}$ of genus $g\geq 2$ with a single conical singularity. The flat torus $\mathbb{T}_{\tau}$ is in the modular space of flat structures $\mathcal{H}_{1}(1):=\mathbb{H}/{\rm SL}_{2}(\mathbb{Z})$, where $\mathbb{H}$ denotes the upper half-plane, and the compact translation surface $S_{g}$ with genus $g\geq 2$ is in the stratum $\mathcal{H}_{g}(2g-2)$. 

By carrying out this procedure on every sheet $L_{n}$, one obtains a large family of tame translation structures on the Loch Ness monster surface, each having a unique singularity of infinite angle. Moreover, by varying the collection of polygons $P_{n}$ with ${n\in\mathbb{Z}}$, one can construct infinitely many pairwise non-isomorphic such structures.

This construction shows that the {non-compact stratum} $\mathcal{H}(S,\mathbf{k}_{\infty})$, where $S$ denotes the Loch Ness monster and $\mathbf{k}_{\infty}=(1,0,0,\ldots)$, contains a subspace naturally identified with the infinite product
$\prod\limits_{n\in\mathbb{Z}} X_{n}$,
where each $X_{n}$ is a copy of the disjoint union of strata
$\bigsqcup_{g\geq 1}\mathcal{H}_{g}(2g-2)$.
Indeed, for every sheet $L_{n}$ one may independently choose a compact translation surface represented by a point of $\mathcal{H}_{g}(2g-2)$, and the resulting choices determine a tame translation structure on $S$ with a unique singularity of infinite angle. That is,
\[
\prod\limits_{n\in\mathbb{Z}} X_{n}\subset \mathcal{H}(S,\mathbf{k}_{\infty}).
\]
\end{remark}

\subsection{Proof Theorem \ref{theorem_case_2_MTB}}\label{proof_theorem_Case_2_MTB} ($k_0=0$ and $\sum\limits_{n=1}^\infty k_n = \infty$)

Let $\kinf$ be such that $k_0=0$ and $\sum\limits_{n=1}^\infty k_n = \infty$. We want to construct a translation structure on the Loch Ness monster with no cone points of infinite cone angle and with an infinite number of cone points of finite cone angle, as prescribed by the counts $k_1, k_2, k_3,\ldots$. We remind the reader that for $n\geq 1$, the entry $k_n$ in $\kinf$ gives the number of cone points of angle $2\pi(n+1)$. Let $L_0 = \C$, a copy of the complex plane. For each $n\geq 1$, we will perform Operation 1, 2, or 3 to get a discrete set of exactly $k_n$ cone points of angle $2\pi(n+1).$ See Figure~\ref{fig:finite_singularities} for an example.

\begin{figure}[!ht]
		\begin{center}
			\begin{tikzpicture}[scale=.8]
    %%%%%%%%%%%%%%%%%%%%%%%%%%%%%%%%%%%%%%%%%%%%%%%
    %%%%%%%%% SHEET L_0
	\draw [line width = 1pt] (-8, 2.9) -- (-8, -6) -- (6, -6) -- (6, 10.5) -- (-8, 10.5) -- (-8, 8.1);

    \draw [line width = 1pt] (-8, 3.1) -- (-8, 7.9);

    \draw [black!50, line width=1pt,  <->, >=latex, dashed] (-8,-4.5) -- (6, -4.5);
    \draw [black!50, line width=1pt,  <->, >=latex, dashed] (-2,-6) -- (-2,10.5);  
    \node at (-7.5,10) {$L_{0}$};

    %%%%% HEXAGONS
    \node[draw, line width=1pt, regular polygon, regular polygon sides=6, minimum size=1.2cm, fill=gray!20] at (-2,-2) {};

    \node[draw, line width=1pt, regular polygon, regular polygon sides=6, minimum size=1.2cm, fill=gray!20] at (.5,-2) {};

    %%%%% SQUARES
    \node[draw, line width=1pt, regular polygon, regular polygon sides=4, minimum size=1.3cm, fill=gray!20] at (-2,.5) {};

    \node[draw, line width=1pt, regular polygon, regular polygon sides=4, minimum size=1.3cm, fill=gray!20] at (.5,.5) {};

    \node[draw, line width=1pt, regular polygon, regular polygon sides=4, minimum size=1.3cm, fill=gray!20] at (3,.5) {};

    \node at (4.5, .5) {$\dots$};

    %%%%% 14-gons
    \node[draw, line width=1pt, regular polygon, regular polygon sides=14, minimum size=1.2cm, fill=gray!20] at (-2,3) {};

    \node[draw, line width=1pt, regular polygon, regular polygon sides=14, minimum size=1.2cm, fill=gray!20] at (.5,3) {};

    \draw [->, >=latex, line width=1pt, red] plot[smooth] coordinates {(-4.5,3)(-6.8,3.2)(-8,3.1)};
    \node[red] at (-7.2,3.6) {$b_{3}^{+}$};

    \draw [->, >=latex, line width=1pt, red] plot[smooth] coordinates {(-4.5,3)(-6.8,2.8)(-8,2.9)};
    \node[red] at (-7.2,2.5) {$b_{3}^{-}$};
    %\node[draw, line width=1pt, regular polygon, regular polygon sides=4, minimum size=1.3cm, fill=gray!20] at (-4.9,2.4) {};
    \fill[red] (-4.5,3) circle (0.08);

    %%%%% OCTAGONS
    \node[draw, line width=1pt, regular polygon, regular polygon sides=8, minimum size=1.2cm, fill=gray!20] at (-2,5.5) {};

    %%%%% 22-gons

    \draw [->, >=latex, line width=1pt, blue] plot[smooth] coordinates {(-4.5,8)(-6.8,8.2)(-8,8.1)};
    \node[blue] at (-7.2,8.6) {$b_5^+$};
    
    \draw [->, >=latex, line width=1pt, blue] plot[smooth] coordinates {(-4.5,8)(-6.8,7.8)(-8,7.9)};
    \node[blue] at (-7.2,7.5) {$b_5^-$};
    
    \fill[blue] (-4.5,8) circle (0.08);

%%%%%%%%%%%%%%%%%%%%%%%%%%%%%%%%%%%%%%%%%%%%%%
%%%%%%%%%%%%%%%%%%Sheet L_{3}
    \draw[line width = 1] (-8,15.15) -- (-8, 11.5) -- (-1.5, 11.5) -- (-1.5, 18) -- (-8, 18) -- (-8, 15.35);

    \draw [black!50, line width=1pt,  <->, >=latex, dashed] (-1.5,12.75) -- (-8, 12.75);
    \draw [black!50, line width=1pt,  <->, >=latex, dashed] (-4.75,11.5) -- (-4.75,18);  
    \node at (-7.5,17.5) {$L_{3}$};

    \draw [->, >=latex, line width=1pt, red] plot[smooth] coordinates {(-4.75,15.25)(-6.5,15.45)(-8,15.35)};
    \node[red] at (-7.2,15.85) {$b_3^-$};
    
    \draw [->, >=latex, line width=1pt, red] plot[smooth] coordinates {(-4.75,15.25)(-6.5,15.05)(-8,15.15)};
    \node[red] at (-7.2,14.65) {$b_3^+$};
    
    \node[draw, line width=1pt, regular polygon, regular polygon sides=4, minimum size=1.3cm, fill=gray!20] at (-4.15,14.65) {};
    \fill[red] (-4.75,15.25) circle (0.08);

%%%%%%%%%%%%%%%%%%%%%%%%%%%%%%%%%%%%%%%%%%%%%%
%%%%%%%%%%%%%%%%%%Sheet L_{5}
   \begin{scope}[xshift = 7.5cm]
    
    \draw[line width = 1] (-8,15.65) -- (-8, 11.5) -- (-1.5, 11.5) -- (-1.5, 18) -- (-8, 18) -- (-8, 15.85);

    \draw [black!50, line width=1pt,  <->, >=latex, dashed] (-8,12.75) -- (-1.5, 12.75);
    \draw [black!50, line width=1pt,  <->, >=latex, dashed] (-4.75,11.5) -- (-4.75,18);  
    \node at (-7.5,17.5) {$L_{5}$};

    \draw [->, >=latex, line width=1pt, blue] plot[smooth] coordinates {(-4.75,15.75)(-6.5,15.95)(-8,15.85)};
    \node[blue] at (-7.2,16.35) {$b_5^-$};
    
    \draw [->, >=latex, line width=1pt, blue] plot[smooth] coordinates {(-4.75,15.75)(-6.5,15.55)(-8,15.65)};
    \node[blue] at (-7.2,15.15) {$b_5^+$};
    
    \node[draw, line width=1pt, regular polygon, regular polygon sides=8, minimum size=1.2cm, fill=gray!20] at (-4.02,15.45) {};
    \fill[blue] (-4.75,15.75) circle (0.08);

    \end{scope}

			\end{tikzpicture}
		\end{center}
		\caption{\em A Loch Ness monster corresponding to the sequence $(0;4, \infty, 5, 1, 1, 0, 0, \ldots)$. We observe that the polygons removed to the right of $b_3^\pm$ are 14-gons.}
		\label{fig:finite_singularities}
	\end{figure}

\begin{itemize}
    
    \item Operation 1: $n$ even and $k_n \in \N \cup \{\infty\}$. For each $j \in \{0, \ldots, k_n-1\}$ (or $j \in \N_0$ if $k_n = \infty$), let $P_j$ be a unit-area regular $2n$-gon centered at the point $2j + i(2n+1)$ in $L_0$. Remove the interior of each $P_j$ and identify opposite edges via translation. Since $n$ is even, by part (a) of Lemma~\ref{lem:2n-gon}, this process introduces $k_n$ singularities of cone angle $(2n+2)\pi = 2\pi(n+1)$, as desired.

    \item Operation 2: $n$ odd and $k_n = 2m$ finite and even or $k_n = \infty$. For each $j \in \{0, \ldots, m-1\}$ (or $j \in \N_0$ if $k_n = \infty$), let $P_j$ be a unit-area regular $2(2n+1)$-gon centered at the point $2j + i(2n+1)$ in $L_0$. Remove the interior of each $P_j$ and identify opposite edges via translation. Since $2n+1$ is odd, by part (b) of Lemma~\ref{lem:2n-gon}, this process introduces $k_n$ singularities of cone angle $(2n+2)\pi = 2\pi(n+1)$, as desired.

    \item Operation 3: $n$ odd and $k_n = 2m+1$ finite and odd. As in Case 2, for each $j \in \{0, \ldots, m-1\}$, let $P_j$ be a unit-area regular $2(2n+1)$-gon centered at the point $2j + i(2n+1)$ in $L_0$. Remove the interior of each $P_j$ and identify opposite edges via translation to produce $2m$ singularities of cone angle $(2n+2)\pi = 2\pi(n+1)$. Additionally, introduce an infinite slit parallel to the negative real axis with initial point $p = -2 + i(2n+1)$ with top edge $b_n^+$ and bottom edge $b_n^-$ as in Figure~\ref{fig:finite_singularities}. We also introduce a new sheet $L_n = \C$ with a corresponding slit parallel to the negative real axis and initial point $q = i(2n+1)$. After gluing the boundaries of the slits, the point $[p] = [q]$ has cone angle $4\pi$. In $L_n$, let $Q$ be a regular unit-area $2(n-1)$-gon which intersects the slit $b_n^\pm$ only at the initial point $q$. We remove the interior of $Q$ and identify opposite edges. By part (a) of Lemma~\ref{lem:2n-gon-on-sing}, after this operation the singularity $[p]=[q]$ has cone angle $4\pi + 2(n-1)\pi = 2\pi(n+1)$. Thus we have $2m+1$ singularities of cone angle $2\pi(n+1)$, as desired.
\end{itemize}

It remains to be shown that the resulting surface is homeomorphic to the Loch Ness monster. First we check that the resulting surface has infinite genus. Given that $\sum\limits_{n=1}^\infty k_n = \infty$, we know that $k_n = \infty$ for some $n \geq 1$ or $k_n > 0$ for infinitely many $n$. Suppose $k_n = \infty$ for some $n$. Then in performing Operation 1 or 2, depending on the parity of $n$, we remove the interiors of infinitely many polygons and identify opposite edges. By Lemma~\ref{lem:2n-gon}, this gives us a surface of infinite genus.

On the other hand, suppose that $k_n > 0$ for infinitely many $n \in \N$. We note that by performing Operation 1, 2, or 3 for a finite $k_n$, we increase the genus of the surface by at least one. Since we perform infinitely many of these operations, the genus of the resulting surface is infinite.

Finally, we check that the surface is one-ended. We start with $L_0 = \C$, which is one-ended, and Operations 1, 2, and 3 involve two types of surgeries. First, we remove the interiors of polygons and identify opposite edges. In $\C$, removing a bounded set does not affect the end space, and the identification of opposite edges is a proper quotient map. Therefore, this type of procedure does not affect the number of ends.
The second type of surgery happens in Operation 3, introducing a new sheet $L_n$ and gluing it to $L_0$ along an infinite slit. Here we apply Lemma~\ref{lemma:one_ends_glue_case_I} inductively to conclude that the resulting surface has one end.
\qed

\subsection{Proof Theorem \ref{theorem_case_3_MTB}} ($k_0 \neq 0$ and $\sum\limits_{n=1}^\infty k_n > 0$)
 
A tame translation structure on the Loch Ness monster with singularity data $\textbf{k}_{\infty}$ can be obtained as follows. Consider the surface S defined in the proof of Theorem~\ref{theorem_case_1_MTB} (see \S \ref{proof_theorem_Case_1_MTB}), and then apply the construction described in the proof of Theorem \ref{theorem_case_2_MTB} (see \S \ref{proof_theorem_Case_2_MTB}), starting with the sheet $L_0$.
\qed

%%%%%%%%%%%%%%%%%%%%%%%%%%%%%%%
%%%%%%%%%%%%%%%%%
\section{Future / open questions}\label{sec:future}
The authors suspect that Theorem \textbf{A} should generalize to other infinite-type surfaces. In particular, the methods used to prove Theorem \textbf{A} should work for surfaces where ${\rm End}(S)={\rm End}_{\infty}(S)$ since these surfaces have pants decompositions without the need for cylinders.
\begin{conjecture}
 Let $S$ be an orientable non-compact surface with end space  ${\rm End}(S)={\rm End}_{\infty}(S)$. If $S$ admits a tame translation structure without infinite cone angle singularities, then $S$ has infinitely many finite cone angle singularities.
\end{conjecture}

We can also ask how much of the moduli space of translation structures on the Loch Ness monster we see with our constructions. Motivated by Remark \ref{remark:LNM_strata_1_0_0_0}, we have the following question.

\begin{question}
Let $\mathcal{H}(S,\mathbf{k}_{\infty})$ denote the stratum of the Loch Ness monster $S$ with singularity data $\mathbf{k}_{\infty}$ as in Theorems~\ref{theorem_case_1_MTB}, \ref{theorem_case_2_MTB}, and \ref{theorem_case_3_MTB}. Do generalizations of our constructions give the whole moduli space? That is, for example, is it true that for $\kinf = (1,0,0,\ldots)$ we have
\[
\mathcal{H}(S,\mathbf{k}_{\infty})=\prod\limits_{n\in\mathbb{Z}}X_{n},
\quad\text{ where }\quad
X_n = \bigsqcup_{g\geq 1}\mathcal{H}_g(2g-2)\text{?}
\]
In other words, can every tame translation structure on the Loch Ness monster surface with a unique singularity of infinite cone angle be obtained from the construction described in Remark \ref{remark:LNM_strata_1_0_0_0}?
\end{question}

\end{document}